\newtheorem{teo}{Theorem}[section]
\newtheorem{lema}{Lemma}[section]
\newtheorem{propo}{Proposition}[section]
\theoremstyle{definition}
\newtheorem{defi}{Definition}[section]
\newtheorem{rek}{Remark}[section]
\newcommand{\supp}{\operatorname{supp}}
\newcommand{\modd}{\operatorname{mod}}
\newcommand{\essinf}{\operatorname{ess\,inf}}
\newcommand{\esssup}{\operatorname{ess\,sup}}
\newcommand{\card}{\operatorname{card}}
\newcommand{\diam}{\operatorname{diam}}
\newcommand{\ve}{\varepsilon}
\newcommand{\N}{\mathbb{N}}
\newcommand{\R}{\mathbb{R}}
\newcommand{\Z}{\mathbb{Z}}
\newcommand{\M}{\mathcal{M}}
\newenvironment{proof1}[1][\textit{\textbf{Proof \em(Theorem~\ref{teocentral1})}}]{\textit{#1.} }{\hfill $\Box$}
\newenvironment{proof2}[1][\textit{\textbf{Proof \em(Theorem~\ref{teocentral2})}}]{\textit{#1.} }{\hfill $\Box$}
\newenvironment{proof3}[1][\textit{\textbf{Proof \em(Theorem~\ref{correlationconj})}}]{\textit{#1.} }{\hfill $\Box$}
\newenvironment{proof4}[1][\textit{\textbf{Proof \em(Theorem~\ref{packingconjC})}}]{\textit{#1.} }{\hfill $\Box$}
\begin{document}

\bigskip

\title {Generic dimensional and dynamical properties of invariant measures of full-shift systems over countable alphabets}
\date{}
\author{Silas L. Carvalho \thanks{} ~~and~ Alexander Condori \thanks{ Work  partially  supported  by Vicerrectorado de Investigación-UNSCH}
}
\maketitle

{ \small \noindent $^{*}\,$Instituto de Ciências Exatas, Universidade Federal de Minas Gerais. Av. Pres. Antônio Carlos 6627, Belo Horizonte-MG, 31270-901, Brasil. \\ {\em e-mail:
silas@mat.ufmg.br 

\em
{\small \noindent $^{\dag\,}$Departamento Académico de Matemática y Física, Universidad Nacional de San Cristóbal de Huamanga. Pq. Portal Independencia Nro.
57 U.V. Parque Sucre, Ayacucho 05001, Perú. \\ {\em e-mail:
alexander.condori@unsch.edu.pe 
\em
\normalsize
\maketitle
\begin{abstract}
\noindent{
  In this work, we are interested in characterizing typical (generic) dimensional properties of invariant measures associated with the full-shift system, $T$, in a product space whose alphabet is a countable set. More specifically, we show that the set of invariant measures with infinite packing dimension equal to infinity is a dense $G_\delta$ subset of $\mathcal{M}(T)$, the space of $T$-invariant measures endowed with the weak topology, where the alphabet $M$ is a countable Polish metric space. We also show that the set of invariant measures with upper $q$-generalized fractal dimension (with $q>1$) equal to infinity is a dense $G_\delta$ subset of $\mathcal{M}(T)$, where the alphabet $M$ is a countable compact metric space. This improves the results obtained by Carvalho and Condori in \cite{AS} and \cite{AS2}, respectively. Furthermore, we discuss the dynamical consequences of such results, regarding the upper recurrence rates and upper quantitative waiting time indicator for typical orbits, and how the fractal dimensions of invariant measures and such dynamical quantities behave under an $\alpha$-Hölder conjugation.
}
\noindent
\end{abstract}
{Key words and phrases}.  {\em Full-shift over a countable alphabet, Hausdorff dimension, packing dimension, invariant measures, generalized fractal dimension.}

\section{Introduction}

Let $(M,\rho)$ be a complete separable (Polish) metric space and let $S$ be its $\sigma$-algebra of Borel sets. Let $(X,\mathcal{B})$ be the bilateral product of a countable number of copies of $(M,S)$. 
Naturally, $\mathcal{B}$ coincides with the $\sigma$-algebra of the Borel sets in the product topology. Let $d$ be any metric in $X$ which is compatible with the product topology. Then, $(X,d)$ is also a Polish metric space. 

 One defines in $X$ the so-called full-shift operator, $T$, by the action
 \[Tx=y,\]
 where $x=(\ldots,x_{-n},\ldots,x_n,\ldots)$, $y=(\ldots,y_{-n},\ldots,y_n,\ldots)$, and for each $i\in\mathbb{Z}$, $y_i=x_{i-1}$. $T$ is clearly an homeomorphism of $X$ onto itself.

 We consider in this work three different settings:
 \begin{enumerate}
 \item  $d$ is any metric compatible with the product topology;
 \item $d$ is such that $T$ is a Lipschitz map; this is the case when $d:X\times X\rightarrow\mathbb{R}$ is given by the law 
\begin{equation}\label{metric1}
  d(x,y) =   \sum_{|n|\geq 0} \frac{1}{2^{|n|}}  \frac{\rho(x_n,y_n)}{1+\rho(x_n,y_n)},
\end{equation}
with $x,y\in X$;
\item $d$ is a sub-exponential metric of the form 
\begin{equation}\label{metric2}
  d(x,y) =   \sum_{|n|\geq 0} \min \left\{\frac{1}{a_{|n|}+1}, d(x_n,y_n)\right\},
  \end{equation}  
with $x,y\in X$, where $(a_n)$ is any monotone increasing sequence such that $\sum_{k\ge 0}\frac{1}{a_{k}+1}<\infty$ and, for each $\alpha>0$, $\lim_{k\to\infty}\frac{a_k}{e^{\alpha k}}=0$ (for instance, let for each $n\in\N\cup\{0\}$, $a_n=n^2$).
 \end{enumerate}
 
Naturally, the metrics defined in~\eqref{metric1} and~\eqref{metric2}  induce topologies in $X$ which also are compatible with the product topology. We mention explicitly along the text the metric that is used in each setting; if a specific metric is not mentioned, we then assume that we are in the first case.

Let $\mathcal{M}(T)$ be the space of all $T$-invariant probability measures, endowed with the weak topology (that is, the coarsest topology for which the net $\{\mu_\alpha\}$ converges to $\mu$ if, and only if, for each bounded and continuous function $f$, $\int fd\mu_\alpha\rightarrow \int fd\mu$). Since $X$ is Polish, $\mathcal{M}(T)$ is also a Polish metrizable space (see~\cite{Sigmundlibro}).

Given $\mu\in\M(T)$, the triple $(X,T,\mu)$ is called an $M$-valued discrete stationary stochastic process (see~\cite{Parthasarathy1961,Sigmund1971,Sigmund1974}; see also~\cite{FuLu} for a discussion of the role of such systems in the study of continuous self-maps over general metric spaces).

The study of generic properties (in Baire's sense; see Definition~\ref{genericprop}) of such $M$-valued discrete stationary stochastic processes goes back to the works of Parthasarathy~\cite{Parthasarathy1961} (regarding ergodicity) and Sigmund~\cite{Sigmund1971,Sigmund1974} (regarding positivity of the measure on open sets, zero entropy of the measure for $M=\mathbb{R}$).



The present authors have obtained in \cite{AS2,AS} some results regarding dimensional properties of invariant measures of full-shift dynamical systems over $X=\prod_{-\infty}^{+\infty}M$ in case the alphabet $M$ is a perfect Polish (or compact) metric space.

More specifically, it has been shown in~\cite{AS2} (respectively, in~\cite{AS}), among other results, that if $M$ is a perfect compact (respectively, Polish) metric space and if one endows $X$ with the metric defined in~\eqref{metric2} (respectively,~\eqref{metric1}), then for each $q\ge 1$, the set of invariant measures with upper $q$-generalized fractal dimensions (respectively, packing dimensions) equal to infinity is a residual subset of $\mathcal{M}(T)$.



The techniques presented in these papers are valid exclusively for the situation where $M$ is a perfect set. So, it remains an open problem to extend such results for full-shift systems over \textit{countable} alphabets (these results are false for full-shifts over finite alphabets, since such systems are expanding; see~\cite{AS3,Fathi1989} for details). Our main goal in this work is precisely to show that this is possible. The problem of possibly extending these results 
in case $M$ is a countable set is posed in both papers~\cite{AS2,AS} (see Remark~1.1 in~\cite{AS2} and Introduction in~\cite{AS}).


Naturally, by extending such results to full-shift systems over countable alphabets, and by presenting some sufficient conditions for a conjugation between (topological) dynamical systems to preserve positive dimensions, we hope to extend
the class of dynamical systems for which the sets of invariant measures with infinite packing and upper $q$-generalized dimensions ($q\ge 1$) are residual.

Based on the results obtained in~\cite{AS}, we also have something to say about recurrence rates of points and about quantitative waiting time indicators.

Thus, before we precisely state our results, some preparation is required.

\subsection{Preliminaries}





Here, we discuss some definitions, motivations and results regarding fractal dimensions of invariant measures and recurrence problems. For a more complete discussion, see~\cite{Pesin1997}.


In what follows, $(X,d)$ is an arbitrary metric space and $\mathcal{B}=\mathcal{B}(X)$ is its Borel $\sigma$-algebra. 

\begin{defi}[radius packing $\phi$-premeasure, \cite{Cutler1995}] Let $\emptyset\neq E\subset X$, and let $0 < \delta < 1$. A $\delta\textrm{-}$\emph{packing} of $E$ is a countable collection of disjoint closed balls $\{B(x_k,r_k)\}_k$ with centers $x_k\in E$ and radii satisfying  $0<r_k\leq \delta/2$, for each $k\in\N$. Given a measurable function $\phi$, the radius packing ($\phi,\delta$)\textrm{-}premeasure of $E$ is given by the law
\begin{eqnarray*}
P^{\phi}_{\delta}(E)=\sup\left\{\sum _{k=1}^{\infty} \phi(2r_k)\mid \{B(x_k,r_k)\}_k \mbox{ is a } \delta\textrm{-}\mbox{packing} \mbox{ of E}\right\}.
\end{eqnarray*}
By letting $\delta\to 0$, one gets the so-called \emph{radius packing} $\phi\textrm{-}$\emph{premeasure}
\begin{eqnarray*}
P^{\phi}_{0}(E)=\lim_{\delta \to 0} P^{\phi}_{\delta}(E).
\end{eqnarray*}
One sets $P^{\phi}_{\delta}(\emptyset)=P^{\phi}_{0}(\emptyset)=0$.
\end{defi}
It is easy to see that $P^{\phi}_{0}$ is non-negative and monotone. Moreover, $P^{\phi}_{0}$ generally fails to be countably sub-additive. One can,
however, build an outer measure from $P^{\phi}_{0}$ by applying Munroe's Method I construction, described  in \cite{Munroe,Rogers}. This leads to the following definition.

\begin{defi}[radius packing $\phi\textrm{-}$measure, \cite{Cutler1995}]
The radius packing $\phi\textrm{-}$measure of $E\subset X$ is defined to be 
\begin{eqnarray}
\label{pakmeasure}
P^{\phi}(E)=\inf\left\{ \sum_{k}P^{\phi}_{0}(E_k)\mid E\subset \bigcup_k E_k \right\}.
\end{eqnarray}
The infimum in (\ref{pakmeasure}) is taken over all countable coverings $\{E_k\}_k$ of $E$. It follows that $P$ is an outer measure on the subsets of $X$.
\end{defi}

In an analogous fashion, one may define the Hausdorff $\phi\textrm{-}$measure. 
\begin{defi}[Hausdorff $\phi\textrm{-}$measure, \cite{Cutler1995}]
For $E\subset X$, the outer measure $H^{\phi}(E)$ is defined by
\begin{eqnarray}
\label{hausmeasure}
H^{\phi}(E)=\lim_{\delta\to 0}\inf \left\{\sum_{k=1}^{\infty} \phi(\diam(E_k))\mid \{E_k\}_k \mbox{ is a } \delta\textrm{-}\mbox{covering} \mbox{ of } E \right\},
\end{eqnarray}
where a $\delta\textrm{-}$\emph{covering} of E  is any countable collection $\{E_k\}_k$ of subsets of $X$ such that, for each $k\in\N$, $E\subset \bigcap_k E_k$ and $\diam(E_k)\leq \delta$. If no such $\delta\textrm{-}$covering exists, one sets $H_{\phi}(E)=\inf \emptyset=\infty$.
\end{defi}

Of special interest is the situation where given $\alpha>0$, one sets $\phi(t)=t^{\alpha}$. In this case, one uses the notation $P^{\alpha}_0$, and refers to $P^{\alpha}_0(E)$ as the $\alpha\textrm{-}$packing premeasure of $E$. Similarly, one uses the notation $P^{\alpha}(E)$ for the packing $\alpha\textrm{-}$measure of $E$, and  $H^{\alpha}(E)$ for the $\alpha\textrm{-}$ Hausdorff measure of $E$.

\begin{defi}[Hausdorff and packing dimensions of a set, \cite{Cutler1995}]
Let $E\subset X$. One defines the Hausdorff dimension of $E$  to be the critical point
\begin{eqnarray*}
\dim_H(E)=\inf\{\alpha>0\mid h^{\alpha}(E)=0\};
\end{eqnarray*}
one defines the packing dimension of $E$ in the same fashion.
\end{defi}

We note that $\dim_H(X)$ or $\dim_P(X)$ may be infinite for some metric space $X$. 

\begin{defi}[lower and upper packing and Hausdorff dimensions of a measure,\cite{Mattila}]\label{HPdim}
  Let $\mu$ be a positive Borel measure on $(X,\mathcal{B})$. 
  The lower and upper packing and Hausdorff dimension of $\mu$ are defined, respectively, by 
\begin{eqnarray*}
\dim_{K}^-(\mu)&=&\inf\{\dim_{K}(E)\mid \mu( E)>0, ~ E\in \mathcal{B}\},\\
\dim_{K}^+(\mu)&=&\inf\{\dim_{K}(E)\mid \mu(X\setminus E)=0, ~ E\in \mathcal{B}\},
\end{eqnarray*}
where $K$ stands for $H$ (Hausdorff) or $P$ (packing). If $\dim_{K}^-(\mu)=\dim_{K}^+(\mu)$, one denotes the common value by $\dim_{K}(\mu)$. 
\end{defi}

Let $\mu$ be a positive finite Borel measure on $X$. One defines the upper and lower local dimensions of $\mu$ at $x\in X$ by
$$\overline{d}_{\mu}(x)=\limsup_{\ve\to 0}\frac{\log \mu(B(x,\ve))}{\log \ve} ~~\mbox{ and }~~ \underline{d}_{\mu}(x)=\liminf_{\ve\to 0}\frac{\log \mu(B(x,\ve))}{\log \ve},$$ 
if, for every $\ve> 0$, $\mu(B(x;\ve))>0$; if not, $\overline{\underline{d}}_\mu(x):=+\infty$.

The next result shows that the essential infimum of the lower (upper) local dimension of a probability measure equals its lower Hausdorff (packing) dimension, whereas the essential supremum of its lower (upper) local dimension equals its upper Hausdorff (packing) dimension; see Appendix in~\cite{AS} for its  proof. 

\begin{propo} 
\label{BGT} 
Let $\mu$ be a probability measure on $X$. Then,
\begin{eqnarray*}
\mu\textrm{-}\essinf \underline{d}_{\mu}(x)=\dim_H^-(\mu)\leq \mu\textrm{-}\esssup \underline{d}_{\mu}(x)= \dim_H^+(\mu), \\
 \mu\textrm{-}\essinf \overline{d}_{\mu}(x)=\dim_P^-(\mu)\leq \mu\textrm{-}\esssup \overline{d}_{\mu}(x)= \dim_P^+(\mu). 
\end{eqnarray*}
\end{propo}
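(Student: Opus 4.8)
The plan is to establish the four equalities (the two inequalities being immediate from Definition~\ref{HPdim}) by the classical mass-distribution / Frostman-type comparison arguments, treating the Hausdorff and packing cases in parallel. I will organize everything around the two ``billiard'' identities: (i) for the lower bounds on the essential infimum, a pointwise lower density estimate forces any set carrying positive measure to have large dimension; (ii) for the upper bounds on the essential supremum, one exhibits a Borel set of full measure built from the points where the local dimension is nearly minimal/maximal and estimates its dimension directly by covers/packings.

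\medskip

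\textbf{Step 1 (Lower Hausdorff dimension).} Write $s_0=\mu\textrm{-}\essinf\,\underline d_\mu$. For the inequality $\dim_H^-(\mu)\ge s_0$: fix $s<s_0$ and a Borel set $E$ with $\mu(E)>0$; for $\mu$-a.e.\ $x\in E$ one has $\underline d_\mu(x)\ge s_0>s$, so there is $\varepsilon(x)>0$ with $\mu(B(x,\varepsilon))\le \varepsilon^{s}$ for all $\varepsilon<\varepsilon(x)$. Decomposing $E$ (up to a null set) into the countably many pieces $E_k=\{x\in E:\varepsilon(x)>1/k\}$ and picking $k$ with $\mu(E_k)>0$, a standard $5r$-covering lemma argument applied to any economical cover of $E_k$ by small balls shows $H^s(E_k)>0$, hence $\dim_H(E_k)\ge s$; letting $s\uparrow s_0$ gives $\dim_H^-(\mu)\ge s_0$. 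For the reverse inequality $\dim_H^-(\mu)\le s_0$: if $s>s_0$ then $A=\{x:\underline d_\mu(x)<s\}$ has positive measure, and on $A_k=\{x:\mu(B(x,\varepsilon))\ge\varepsilon^{s}\text{ for some }\varepsilon<1/k\}$ (positive measure for $k$ large) the usual Vitali/Besicovitch density bound yields $H^{s'}(A_k)=0$ for $s'>s$, whence $\dim_H(A_k)\le s$; since $\mu(A_k)>0$ this forces $\dim_H^-(\mu)\le s$, and letting $s\downarrow s_0$ finishes it.

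\medskip

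\textbf{Step 2 (Upper Hausdorff dimension).} Write $t_1=\mu\textrm{-}\esssup\,\underline d_\mu$. For $\dim_H^+(\mu)\le t_1$: for each $t>t_1$ the set $E=\{x:\underline d_\mu(x)\le t\}$ has full measure, and the decomposition $E=\bigcup_k\{x: \mu(B(x,\varepsilon))\ge \varepsilon^{\,t'}\text{ for arbitrarily small }\varepsilon\}$ (with $t'\in(t_1,t)$) exhibits a full-measure set as a countable union of sets of Hausdorff dimension $\le t$, so $\dim_H^+(\mu)\le t$; let $t\downarrow t_1$. For $\dim_H^+(\mu)\ge t_1$: if $E$ has full measure and $s<t_1$, then $\mu(\{x\in E:\underline d_\mu(x)>s\})>0$, and the Step 1 argument applied to that subset gives $\dim_H(E)\ge\dim_H(\text{that subset})\ge s$; let $s\uparrow t_1$.

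\medskip

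\textbf{Step 3 (Packing case).} Repeat Steps 1--2 verbatim with $\underline d_\mu$ replaced by $\overline d_\mu$, $H^\alpha$ replaced by the packing premeasure $P^\alpha_0$ and measure $P^\alpha$, and covers replaced by $\delta$-packings; the role of the $5r$-covering lemma is now played by the definition of $P^\alpha_0$ as a supremum over disjoint-ball packings (no covering lemma needed for the packing lower density estimates, which is exactly why $\overline d_\mu$ pairs with $\dim_P$). The only genuine subtlety is that the packing \emph{measure} $P^\alpha$ is obtained from $P^\alpha_0$ by Munroe's Method~I (Definition for \eqref{pakmeasure}), so when passing from a density statement about $P^\alpha_0$ on a piece $E_k$ to a dimension statement one must note that $P^\alpha(E)=0$ iff $E$ is a countable union of sets with $P^\alpha_0=0$, and that $P^\alpha_0(E_k)>0$ already implies $P^\alpha(E_k)>0$ hence $\dim_P(E_k)\ge\alpha$; this is where I expect to spend the most care.

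\medskip

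\textbf{Main obstacle.} The delicate point throughout is the interchange between ``a.e.\ pointwise local-dimension bound'' and ``a set-theoretic dimension bound'': one cannot choose the threshold radius $\varepsilon(x)$ uniformly, so every step hinges on the countable decomposition $E=\bigcup_k E_k$ into sets on which the bound holds at a uniform scale, plus the remark that $\dim_K$ is countably stable (the sup over the pieces) while positivity of $\mu$ is inherited by at least one piece. Getting the direction of each inequality right — in particular that $\overline d_\mu$ (not $\underline d_\mu$) controls $\dim_P$, because the packing premeasure rewards points where $\mu(B(x,\varepsilon))$ is small along a \emph{subsequence} of scales — is the part that must be checked rather than asserted; after that, everything reduces to the standard lemmas recorded in \cite{Mattila,Cutler1995}.
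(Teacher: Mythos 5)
The paper itself gives no proof of this proposition (it points to the Appendix of~\cite{AS}, which rests on the density theorems for Hausdorff and packing measures of~\cite{Cutler1995}), and your plan follows that same standard Frostman/density route. The Hausdorff half is fine: the mass-distribution direction needs no covering lemma at all (a direct comparison $\mu(U_i)\le\mu(B(x_i,2\diam U_i))\le(2\diam U_i)^s$ on each threshold piece suffices), and for the opposite direction the $5r$-Vitali lemma — not Besicovitch, which is unavailable in a general Polish space — is exactly what is needed and what you invoke.

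The genuine gap is in Step 3, at precisely the point you flag as the main subtlety: the claim that $P^{\alpha}_{0}(E_k)>0$ already implies $P^{\alpha}(E_k)>0$ is false. The radius packing premeasure is not countably subadditive and behaves like an upper box-counting quantity on compact sets; for instance $E=\{0\}\cup\{1/n\mid n\in\N\}\subset\R$ has $P^{\alpha}_{0}(E)=\infty$ for $\alpha<1/2$, while $P^{\alpha}(E)=0$ for every $\alpha>0$ because countable sets have zero packing measure. So positivity of the premeasure on your fixed threshold piece gives no lower bound on $\dim_P$, and in fact no threshold decomposition is needed in the packing lower-bound direction, since $\overline{d}_{\mu}(x)>s$ already yields arbitrarily small radii $r$ with $\mu(B(x,r))<r^{s}$. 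The correct way through Munroe's Method I is to quantify over an \emph{arbitrary} countable cover $\{A_i\}$ of the positive-measure set $E$: some $A_i$ has $\mu^{*}(E\cap A_i)>0$, and one shows $P^{s}_{\delta}(A_i)\ge P^{s}_{\delta}(E\cap A_i)\ge c\,\mu^{*}(E\cap A_i)$ for every $\delta>0$ by constructing a $\delta$-packing of $E\cap A_i$ that captures a definite fraction of its outer measure (a greedy, Vitali-type exhaustion choosing at each stage an admissible ball disjoint from the closed union of the previously chosen ones and nearly maximizing the captured measure); summing over the cover gives $P^{s}(E)\ge c\,\mu(E)>0$, hence $\dim_P(E)\ge s$. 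This is Cutler's packing density theorem in general metric spaces, and your remark that ``no covering lemma is needed, the supremum over packings suffices'' is too quick: the packing must also exhaust a fixed proportion of $\mu^{*}(E\cap A_i)$, which is where the work lies. Either import that theorem from~\cite{Cutler1995} explicitly or carry out the cover-wise construction; the packing upper-bound direction is unproblematic as you set it up, since there the inequality $P^{t'}(E_k)\le P^{t'}_{0}(E_k)<\infty$ goes the harmless way.
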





As mentioned in \cite{AS}, the Hausdorff dimension of relatively simple sets can be very difficult to calculate. Furthermore, the notion of Hausdorff dimension is not fully adapted to the dynamics itself (e.g., if $Z$ is a periodic orbit, then its Hausdorff dimension is zero, regardless of whether the orbit is stable, unstable or neutral). 

Thus, in order to obtain relevant information about the dynamics of a (continuous) map $f:X\rightarrow X$ (where $ X $ is a measurable Borel space), one must consider not only the geometry of the measurable set $ Z \subset X $, but also the distribution of points in $ Z$ under $f$. That is, one should be interested in the frequency with which a given point $ x \in Z $ visits a fixed subset $ Y \subset Z$ under $f$. If $ \mu $ is an ergodic measure for which $ \mu(Y) > 0 $, for a typical point $ x \in Z $, the average number of visits is equal to $ \mu(Y) $. Therefore, the orbit distribution is completely determined by the measure $ \mu $. On the other hand, the measure $\mu$ is completely specified by the distribution of a typical orbit. In this direction, Grassberger, Procaccia and Hentschel \cite{Grassberger} introduced the so-called correlation dimension of a probability measure in an attempt to produce a characteristic of a dynamical system that captures information about the typical global behavior (with respect to a measure invariant) of the trajectories by observing only one of them.

The formal definition  is as follows (see~\cite{PesinT1995,Pesin1993,Pesin1997}): let $(X,r)$ be a complete and separable (Polish) metric space, and let $T:X\rightarrow X$ be a continuous mapping. Given $x\in X$, $\ve>0$  and $n\in\N$, one defines the correlation sum of order $q\in\N\setminus\{1\}$ (specified by the points $\{T^i(x)\}$, $i=1,\ldots,n$) by
\begin{equation*}
C_q(x,n,\ve)=\frac{1}{n^q}\,\card\,\{(i_1\cdots i_q)\in \{0,1,\cdots, n\}^q\mid r(T^{i_j}(x),T^{i_l}(x))\leq \ve ~ \mbox{ for any } ~ 0\leq j,l\leq q\},
\end{equation*}
where $\card A$ is the cardinality of the set $A$. Given $x\in X$, one defines (when the limit $n\to\infty$ exists) the quantities 
\begin{equation}\label{cordim}
 \underline{\alpha}_q(x)=  \frac{1}{q-1}\lim_{\overline{ \ve\to 0}}   \lim_{n\to \infty}\frac{\log C_q(x,n,\ve)}{\log(\ve)}, ~~  ~~  \overline{\alpha}_q(x)= \frac{1}{q-1}\overline{\lim_{\ve\to 0}} \lim_{n\to \infty}\frac{\log C_q(x,n,\ve)}{\log(\ve)},
\end{equation}
the so-called the \emph{lower} and the \emph{upper $q$-correlation dimensions at $x$}. If the limit $\ve\to 0$ exists, we denote it by $\alpha_q$, the so-called \emph{$q$-correlation dimension at $x$}. In this case, if $n$ is large and $\ve$ is small, one has the asymptotic relation
$$C_q(x,n,\ve) \thicksim\ve^{(q-1)\alpha_q}.$$
 
$C_q(x,n,\ve)$ gives an account of how the orbit of $x$, truncated at time $n$, ``folds'' into an $\ve$-neighborhood of itself; the larger $C_q(x,n,\ve)$, the more ``tight'' this truncated orbit is. $\underline{\alpha}_q(x)$ and $ \overline{\alpha}_q(x)$ are, respectively, the lower and upper growing rates of $C_q(x,n,\ve)$ as $n\to\infty$ and $\ve\to 0$ (in this order).

\begin{defi}[Energy function] 
Let $X$ be a general metric space and let $\mu$ be a Borel probability measure on $X$. For $q\in \R\setminus\{1\}$ and  $\ve\in(0,1)$, one defines the so-called energy function $I_{\cdot}(q,\ve):\M\rightarrow(0,+\infty]$ by the law
\begin{eqnarray}
\label{ef}
I_{\mu}(q,\ve)=\int_{\supp(\mu)}\mu(B(x,\ve))^{q-1}d\mu(x),
\end{eqnarray}
where $\supp(\mu)$ is the topological support of $\mu$.
\end{defi}

The next result shows that the two previous definitions are intimately related.

\begin{teo}[Pesin \cite{Pesin1993,Pesin1997}]
\label{Pesincorrelation}
Let $X$ be a Polish metric space, assume that $\mu$ is ergodic and let $q\in\N\setminus\{1\}$. Then, there exists a set $Z\subset X$ of full $\mu$-measure such that, for each $R,\eta>0$ and each $x\in Z$, there exists $N=N(x,\eta,R)\in\N$ such that 
\[|C_q(x,n,\varepsilon)-I_{\mu}(q,\varepsilon)|\leq \eta\]
holds for each $n\geq N$ and each $0<\varepsilon\leq R$. 
\end{teo}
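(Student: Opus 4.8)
The goal is Pesin's theorem relating the correlation sum $C_q(x,n,\varepsilon)$ to the energy integral $I_\mu(q,\varepsilon)$ for $\mu$ ergodic and $q\in\N\setminus\{1\}$. The natural approach is to recognize $C_q(x,n,\varepsilon)$ as an ergodic average for the product system and apply Birkhoff's ergodic theorem to the $q$-fold Cartesian power $(X^q,T\times\cdots\times T,\mu^{\otimes q})$, then pass from pointwise a.e.\ convergence to the uniform-in-$\varepsilon$ control claimed in the statement.

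First I would fix $\varepsilon>0$ and introduce the set $D_\varepsilon=\{(y_1,\dots,y_q)\in X^q\mid r(y_j,y_l)\le\varepsilon \text{ for all } 1\le j,l\le q\}$, whose indicator $\chi_{D_\varepsilon}$ is a bounded measurable function on $X^q$. Then
\[
C_q(x,n,\varepsilon)=\frac{1}{n^q}\sum_{i_1,\dots,i_q=0}^{n}\chi_{D_\varepsilon}\bigl(T^{i_1}x,\dots,T^{i_q}x\bigr),
\]
which up to the harmless boundary discrepancy between summing to $n$ versus $n-1$ (an $O(1/n)$ error) is exactly the Birkhoff average along the diagonal orbit $\{(T^i x)^{\otimes q}\}$. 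The key algebraic point is that this $q$-dimensional average factors: writing $S_n f=\frac1n\sum_{i=0}^{n-1}f\circ T^i$, a multilinear expansion of $\chi_{D_\varepsilon}$ is not available directly, but one uses instead that for $\mu$ ergodic the diagonal measure pushes forward correctly — more precisely, Birkhoff applied to $T\times\cdots\times T$ with respect to $\mu^{\otimes q}$ would give convergence $\mu^{\otimes q}$-a.e., which is not what we want; instead one reduces the $q$-fold sum to iterated single Birkhoff averages. Concretely, $C_q(x,n,\varepsilon)\to\int_X\cdots\int_X\chi_{D_\varepsilon}(y_1,\dots,y_q)\,d\mu(y_1)\cdots d\mu(y_q)$ for $\mu$-a.e.\ $x$, and this multiple integral equals $\int_X\mu(B(x,\varepsilon))^{q-1}d\mu(x)=I_\mu(q,\varepsilon)$ provided the ball $B(x,\varepsilon)$ is taken closed (matching the $\le\varepsilon$ in the definition of $D_\varepsilon$). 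Establishing this a.e.\ convergence for a single $\varepsilon$ is the core ergodic-theoretic step; it proceeds by first handling $q=2$ via Birkhoff for $f(y)=\mu(B(y,\varepsilon))$ after interchanging a limit and an integral (dominated convergence, since $0\le\chi_{D_\varepsilon}\le1$), then inducting on $q$.

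The main obstacle is upgrading from "for each fixed $\varepsilon$, convergence holds for $\mu$-a.e.\ $x$" to the uniform statement: a single full-measure set $Z$ and a threshold $N(x,\eta,R)$ that works simultaneously for all $0<\varepsilon\le R$. The standard device is monotonicity in $\varepsilon$: both $n\mapsto$-limit functions $\varepsilon\mapsto I_\mu(q,\varepsilon)$ and the limiting objects are monotone non-decreasing in $\varepsilon$, and $C_q(x,n,\cdot)$ is monotone non-decreasing in $\varepsilon$ for each fixed $x,n$. One takes a countable dense set of parameters $\varepsilon_m\downarrow 0$ in $(0,R]$, obtains a.e.\ convergence at each $\varepsilon_m$ on a common full-measure set $Z$ (countable intersection), controls the jumps of $I_\mu(q,\cdot)$ between consecutive $\varepsilon_m$ by choosing the mesh fine enough relative to $\eta$ (using right-continuity of $\varepsilon\mapsto I_\mu(q,\varepsilon)$, which follows from dominated convergence), and then squeezes $C_q(x,n,\varepsilon)$ for arbitrary $\varepsilon$ between its values at the bracketing $\varepsilon_m$'s. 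A technical subtlety to watch is the possible discontinuity of $\varepsilon\mapsto\mu(B(x,\varepsilon))$ at radii where the sphere carries mass; this is why one works with a countable set of "good" radii and exploits that the set of $\varepsilon$ for which some sphere has positive mass is, for fixed $x$, at most countable, and more globally that $\int\mu(\partial B(x,\varepsilon))d\mu(x)=0$ for all but countably many $\varepsilon$. Once the bracketing and the uniform control of the $I_\mu(q,\cdot)$-increments are in place, the triangle inequality delivers $|C_q(x,n,\varepsilon)-I_\mu(q,\varepsilon)|\le\eta$ for all $n\ge N(x,\eta,R)$ and all $\varepsilon\in(0,R]$, which is the assertion.
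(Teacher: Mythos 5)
First, a point of comparison: the paper does not prove this statement at all --- Theorem~\ref{Pesincorrelation} is quoted from Pesin \cite{Pesin1993,Pesin1997} and used as a black box --- so there is no internal proof to measure your argument against; it has to stand on its own, and as written it has genuine gaps. The most serious one is that the central ergodic step is asserted rather than proved. Writing $C_q(x,n,\varepsilon)=\mu_{x,n}^{\otimes q}(D_\varepsilon)$ with $\mu_{x,n}=\frac1n\sum_{i=0}^{n-1}\delta_{T^ix}$ is the right starting point, but the convergence $\mu_{x,n}^{\otimes q}(D_\varepsilon)\to\mu^{\otimes q}(D_\varepsilon)$ for $\mu$-a.e.\ $x$ is precisely the content of the theorem, and it does not follow from ``iterated single Birkhoff averages'': Birkhoff applied to $f(y)=\mu(\overline{B}(y,\varepsilon))$ controls $\frac1n\sum_{i}\mu(\overline{B}(T^ix,\varepsilon))$, which is not $C_2(x,n,\varepsilon)=\frac1{n^2}\sum_{i,j}\chi_{\{r\le\varepsilon\}}(T^ix,T^jx)$. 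What one actually needs is a.e.\ weak convergence $\mu_{x,n}\to\mu$ (via Birkhoff on a countable family of bounded uniformly continuous functions) together with a Portmanteau-type argument, which requires $\mu^{\otimes q}(\partial D_\varepsilon)=0$ --- a condition that fails for some radii, and this failure is exactly what makes the uniform-in-$\varepsilon$ claim delicate. Moreover, your identification of the limit is wrong for $q\ge 3$ under the paper's definition of $C_q$ (all \emph{pairwise} distances $\le\varepsilon$): the set $D_\varepsilon$ is strictly contained in $\{(y_1,\dots,y_q)\mid r(y_j,y_1)\le\varepsilon,\ j=2,\dots,q\}$, whose $\mu^{\otimes q}$-measure is $\int\mu(\overline{B}(y,\varepsilon))^{q-1}d\mu(y)$; a three-point example (three atoms on a line with consecutive distances $1$ and $\varepsilon=1$) gives $\mu^{\otimes 3}(D_\varepsilon)=15/27<17/27=I_\mu(3,\varepsilon)$, so the equality ``$\mu^{\otimes q}(D_\varepsilon)=I_\mu(q,\varepsilon)$'' you rely on is false in general.

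The uniformization step also does not close. The function $\varepsilon\mapsto I_\mu(q,\varepsilon)$ is monotone but can have genuine jump discontinuities (spheres or atoms carrying positive mass); with closed balls it is right-continuous but not left-continuous, so no finite mesh $\{\varepsilon_m\}$ makes the increments $I_\mu(q,\varepsilon_{m+1})-I_\mu(q,\varepsilon_m)$ uniformly smaller than $\eta$ near such a jump, and the squeeze $C_q(x,n,\varepsilon_m)\le C_q(x,n,\varepsilon)\le C_q(x,n,\varepsilon_{m+1})$ then fails to give $|C_q(x,n,\varepsilon)-I_\mu(q,\varepsilon)|\le\eta$ for $\varepsilon$ just below a jump radius: empirical pair distances concentrating slightly below that radius would inflate $C_q$ beyond $I_\mu(q,\varepsilon)+\eta$, and ruling this out needs an additional argument. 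Your remark that only countably many radii are ``bad'' does not help, because the statement to be proved demands a single $N(x,\eta,R)$ valid for \emph{all} $0<\varepsilon\le R$, not for all $\varepsilon$ outside a countable exceptional set. So the outline captures the standard skeleton (empirical measures, Birkhoff, monotone bracketing), but the core a.e.\ convergence, the correct identification of the limit for $q\ge3$, and the treatment of discontinuity radii --- i.e., the actual substance of Pesin's theorem --- are missing.
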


\begin{defi}[Generalized fractal dimensions]\label{gfd}
 Let $X$ be a general metric space, let $\mu$ be a Borel probability measure on $X$, and let $q\in (0,\infty)\setminus\{1\}$. The so-called upper and lower $q$-generalized fractal dimensions of $\mu$ are defined, respectively, as
\[ D^+_{\mu}(q)=\limsup_{\varepsilon\downarrow 0} \frac{\log I_{\mu}(q,\varepsilon)}{(q-1)\log \varepsilon}  ~~\mbox{ and  }~~ D^-_{\mu}(q)=\liminf_{\varepsilon\downarrow 0} \frac{\log I_{\mu}(q,\varepsilon)}{(q-1)\log \varepsilon},\]
with values in $[0,+\infty]$.
For $q=1$, one defines the so-called upper and lower entropy dimensions, respectively, as
\[D^+_{\mu}(1)=\limsup_{\varepsilon\downarrow 0} \frac{\int_{\supp(\mu)} \log \mu(B(x,\ve))d\mu(x)}{\log \varepsilon},\]
\[D^-_{\mu}(1)=\liminf_{\varepsilon\downarrow 0} \frac{\int_{\supp(\mu)} \log \mu(B(x,\ve))d\mu(x)}{\log \varepsilon}.\]
\end{defi}

Some useful relations involving the generalized fractal, Hausdorff and packing dimensions of a probability measure are given by the following inequalities.

\begin{propo}[Proposition~1.2 in~\cite{AS2}]
\label{BGT1} 
Let $\mu$ be a Borel probability measure over $X$, let $q>1$ and let $0<s<1$. Then,
\begin{eqnarray*}
D^-_{\mu}(q)\leq \dim_H^-(\mu)\leq  \dim_H^+(\mu)\leq D^-_{\mu}(s), \\
D^+_{\mu}(q)\leq \dim_P^-(\mu)\leq  \dim_P^+(\mu)\leq D^+_{\mu}(s). 
\end{eqnarray*}
Furthermore, if $\supp(\mu)$ is compact, then $D_{\mu}^{\pm}(q)\leq D_{\mu}^{\pm}(1)\leq D_{\mu}^{\pm}(s)$.
\end{propo}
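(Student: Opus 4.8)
The plan is to derive every inequality from Proposition~\ref{BGT}, which expresses $\dim_H^-(\mu),\dim_H^+(\mu)$ as the $\mu$-essential infimum and supremum of $\underline{d}_\mu$, and $\dim_P^-(\mu),\dim_P^+(\mu)$ as those of $\overline{d}_\mu$. The elementary identity behind everything is that, for every $\varepsilon\in(0,1)$ and every $\alpha>0$,
\[
\int_{\supp(\mu)}\Big(\tfrac{\mu(B(x,\varepsilon))}{\varepsilon^{\alpha}}\Big)^{q-1}d\mu(x)=\varepsilon^{-\alpha(q-1)}\,I_\mu(q,\varepsilon),
\]
and likewise with $q$ replaced by $s\in(0,1)$; since $q-1>0$ while $s-1<0$, a small ball mass $\mu(B(x,\varepsilon))$ keeps the integrand small in the first case but makes it large in the second. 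Unwinding the sign reversals produced by $\log\varepsilon<0$, the estimate $I_\mu(q,\varepsilon)\le\varepsilon^{\alpha(q-1)}$ is equivalent to $\log I_\mu(q,\varepsilon)/\bigl((q-1)\log\varepsilon\bigr)\ge\alpha$, and the scheme in each of the four cases is: read off from the hypothesis on $D_\mu^{\pm}(q)$ (resp.\ $D_\mu^{\pm}(s)$) such polynomial control on $I_\mu$ at the relevant scales, convert it to an almost-everywhere bound on $\underline{d}_\mu$ or $\overline{d}_\mu$, and invoke Proposition~\ref{BGT}.

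The inequalities $D^+_\mu(q)\le\dim_P^-(\mu)$ and $\dim_H^+(\mu)\le D^-_\mu(s)$ follow from a single use of Fatou's lemma. For the first, if $\alpha<D_\mu^+(q)$ then, by definition of the $\limsup$, there is a sequence $\varepsilon_n\downarrow0$ with $I_\mu(q,\varepsilon_n)\le\varepsilon_n^{\alpha(q-1)}$, so for each $\alpha'<\alpha$ one has $\int(\mu(B(x,\varepsilon_n))/\varepsilon_n^{\alpha'})^{q-1}d\mu\le\varepsilon_n^{(\alpha-\alpha')(q-1)}\to0$; Fatou gives $\liminf_n\mu(B(x,\varepsilon_n))/\varepsilon_n^{\alpha'}=0$ for $\mu$-a.e.\ $x$, hence $\overline{d}_\mu(x)\ge\alpha'$ a.e., and letting $\alpha'\uparrow\alpha\uparrow D_\mu^+(q)$ yields $\dim_P^-(\mu)=\mu\textrm{-}\essinf\overline{d}_\mu\ge D_\mu^+(q)$. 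The second inequality is the mirror image: if $D_\mu^-(s)<t$, a sequence $\varepsilon_n\downarrow0$ gives $I_\mu(s,\varepsilon_n)\le\varepsilon_n^{t(s-1)}$, i.e.\ $\int(\varepsilon_n^{t'}/\mu(B(x,\varepsilon_n)))^{1-s}d\mu\le\varepsilon_n^{(t'-t)(1-s)}\to0$ for $t'>t$, whence $\underline{d}_\mu(x)\le t'$ a.e.\ and $\dim_H^+(\mu)=\mu\textrm{-}\esssup\underline{d}_\mu\le t$.

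The remaining inequalities $D^-_\mu(q)\le\dim_H^-(\mu)$ and $\dim_P^+(\mu)\le D^+_\mu(s)$ are more delicate: the hypothesis now controls $I_\mu$ at \emph{all} small scales (a $\liminf$, resp.\ $\limsup$, in the favourable direction), but one needs to control ball masses at \emph{all} small radii, so Fatou does not suffice and I would run a dyadic summation. For $D^-_\mu(q)\le\dim_H^-(\mu)$: fix $\alpha'<\alpha<D^-_\mu(q)$, so $I_\mu(q,\varepsilon)\le\varepsilon^{\alpha(q-1)}$ for all small $\varepsilon$; with $a_k(x):=\mu(B(x,2^{-k}))\,2^{k\alpha'}$ one gets $\int a_k^{q-1}d\mu=2^{k\alpha'(q-1)}I_\mu(q,2^{-k})\le 2^{-k(\alpha-\alpha')(q-1)}$, a summable geometric sequence, so summing in $k$ — by subadditivity of $t\mapsto t^{q-1}$ when $1<q\le2$, and by Minkowski's inequality in $L^{q-1}(\mu)$ when $q\ge2$ — gives $\int(\sum_k a_k)^{q-1}d\mu<\infty$, hence $\sup_k a_k(x)<\infty$ for $\mu$-a.e.\ $x$. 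As $\mu(B(x,r))/r^{\alpha'}\le 2^{\alpha'}a_k(x)$ for $2^{-k-1}<r\le2^{-k}$, this forces $\limsup_{r\downarrow0}\mu(B(x,r))/r^{\alpha'}<\infty$, i.e.\ $\underline{d}_\mu(x)\ge\alpha'$ a.e.; letting $\alpha'\uparrow\alpha\uparrow D^-_\mu(q)$ and using Proposition~\ref{BGT} finishes it. The inequality $\dim_P^+(\mu)\le D^+_\mu(s)$ is entirely parallel, starting from $D^+_\mu(s)<t$ (so $I_\mu(s,\varepsilon)\le\varepsilon^{t(s-1)}$ for all small $\varepsilon$), taking $a_k(x)=2^{-kt'}/\mu(B(x,2^{-k}))$ with $t'>t$, and using subadditivity of $y\mapsto y^{1-s}$ (here $1-s\in(0,1)$). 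I expect this passage — from scale-by-scale integral control to a bound holding at \emph{every} small radius for $\mu$-a.e.\ point — to be the main obstacle; the rest is bookkeeping of signs.

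Finally, the monotonicity statement rests on convexity. For fixed $\varepsilon\in(0,1)$, $q\mapsto g_\varepsilon(q):=\log I_\mu(q,\varepsilon)=\log\int\mu(B(x,\varepsilon))^{q-1}d\mu$ is convex (by Hölder's inequality) and $g_\varepsilon(1)=0$, so its difference quotient $q\mapsto g_\varepsilon(q)/(q-1)$ is non-decreasing and, after division by $\log\varepsilon<0$, $q\mapsto g_\varepsilon(q)/\bigl((q-1)\log\varepsilon\bigr)$ is non-increasing; taking $\limsup_{\varepsilon\downarrow0}$ and $\liminf_{\varepsilon\downarrow0}$ shows $D_\mu^{\pm}$ is non-increasing in $q$, which already gives $D_\mu^{\pm}(q)\le D_\mu^{\pm}(s)$. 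To pass through $q=1$ one uses the compactness of $\supp(\mu)$: then $x\mapsto\mu(B(x,\varepsilon))$ is lower semicontinuous and strictly positive on the compact set $\supp(\mu)$, hence bounded below by some $m(\varepsilon)>0$, which justifies differentiating $g_\varepsilon$ under the integral sign and gives $g_\varepsilon'(1)=\int_{\supp(\mu)}\log\mu(B(x,\varepsilon))\,d\mu$; combining the tangent-line inequality $g_\varepsilon(q)\ge g_\varepsilon'(1)(q-1)$ with the division by $(q-1)\log\varepsilon$ and the usual $\limsup/\liminf$ yields $D_\mu^{\pm}(q)\le D_\mu^{\pm}(1)$ for $q>1$ and $D_\mu^{\pm}(1)\le D_\mu^{\pm}(s)$ for $0<s<1$.
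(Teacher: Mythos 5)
Your proof is correct and complete. Keep in mind that the paper itself offers no proof of Proposition~\ref{BGT1}: it is imported verbatim from Proposition~1.2 of \cite{AS2}, so the only comparison possible is with the standard arguments in the literature, and yours is exactly that route — Proposition~\ref{BGT} plus Fatou along a sequence of scales for $D^+_\mu(q)\le\dim_P^-(\mu)$ and $\dim_H^+(\mu)\le D^-_\mu(s)$, a dyadic summation (in effect a Chebyshev/Borel--Cantelli argument) to upgrade integral control at all scales to $\mu$-a.e.\ control at all radii for $D^-_\mu(q)\le\dim_H^-(\mu)$ and $\dim_P^+(\mu)\le D^+_\mu(s)$, and convexity of $q\mapsto\log I_\mu(q,\varepsilon)$ with the tangent line at $q=1$ for the monotonicity statement, where compactness of $\supp(\mu)$ yields the uniform lower bound $m(\varepsilon)>0$ that makes $\int\log\mu(B(x,\varepsilon))\,d\mu$ finite and justifies differentiating under the integral. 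Two cosmetic remarks: you can avoid the case split $1<q\le 2$ versus $q\ge 2$ by applying Chebyshev, $\mu(\{a_k>1\})\le\int a_k^{q-1}d\mu\le 2^{-k(\alpha-\alpha')(q-1)}$, and Borel--Cantelli, which gives $a_k(x)\le 1$ for all large $k$ and $\mu$-a.e.\ $x$ and suffices for the dyadic bridging; and in the Fatou step for $\dim_H^+(\mu)\le D^-_\mu(s)$ the conclusion should first read $\dim_H^+(\mu)\le t'$ for every $t'>t$, giving $\le t$ only after letting $t'\downarrow t$ (a slip of notation, not of substance). Finally, note that all your a.e.\ statements implicitly use $\mu(\supp(\mu))=1$, which holds here because $X$ is separable, and that the degenerate cases where $I_\mu(s,\varepsilon)=+\infty$ (so $D^\pm_\mu(s)=+\infty$) make the corresponding inequalities trivial, consistently with your scheme.
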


As in~\cite{AS}, we are also interested in the polynomial returning rates of the $T$-orbit of a given point to arbitrarily small neighborhoods of itself (which gives a quantitative description of  Poincar\'e's recurrence). This question was posed and studied by Barreira and Saussol in \cite{Barreiral2001} (see also~\cite{Hongwei,Barreira2002,Hu} for further motivations). Given a separable metric space $X$ and a Borel measurable transformation $T$, they have defined the lower and upper recurrence rates of $x\in X$ in the following way: for each fixed $r>0$, let
\begin{eqnarray*}
\tau_r(x)=\inf\{k\in\N\mid T^k x \in \overline{B}(x,r)\}
\end{eqnarray*}
be the  return time of a point $x \in X$ into the closed ball $\overline{B}(x, r)$; 
then,
\begin{eqnarray*}
\underline{R}(x)=\liminf_{r\to 0}\frac{\log \tau_r(x)}{-\log r} ~~\mbox{ and } ~~  \overline{R}(x)=\limsup_{r\to 0}\frac{\log \tau_r(x)}{-\log r}
\end{eqnarray*}
are, respectively, the lower and upper recurrence rates of $x \in X$. Note that $\tau_r(x)$ may be infinite on a set of zero $\mu$-measure.

We refer to~\cite{AS} for a discussion regarding the connection between these recurrence rates and the local dimensions of invariant measures.


We also have something to say  about the quantitative waiting time indicators, defined by Galatolo in~\cite{Galatolo} as follows: let $x,y\in X$ and let $r>0$. The first entrance time of $\mathcal{O}(x):=\{T^ix\mid i\in\mathbb{Z}\}$, the $T$-orbit of $x$, into the closed ball $\overline{B}(y,r)$ is given by
\begin{eqnarray*}
\tau_r(x,y)=\inf\{n\in \N \mid T^n(x)\in \overline{B}(y,r) \}
\end{eqnarray*} 
(note that $\tau_r(x,y)$ may be infinite on a set of zero $\mu\times\mu$-measure).

Naturally, $\tau_r(x,x)$ is just the first return time into the closed ball $\overline{B}(x,r)$. The so-called quantitative waiting time indicators are defined as
\begin{eqnarray*}
\underline{R}(x,y)=\liminf_{r\to 0}\frac{\log \tau_r(x,y)}{-\log r} ~~\mbox{ and } ~~  
\overline{R}(x,y)=\limsup_{r\to 0}\frac{\log \tau_r(x,y)}{-\log r}.
\end{eqnarray*}

Let $(X, T)$ be a dynamical system such that $X$ is a separable metric space and  $T : X \rightarrow X$ is a measurable map, and suppose that there exists a $T$-invariant measure $\mu$. Then, Theorem~4 in~\cite{Galatolo} states that, for each fixed $y\in X$,  one has
\begin{equation}\label{Gala} \underline{R}(x,y)\geq \underline{d}_{\mu}(y)\qquad
  \textrm{and}\qquad \overline{R}(x,y)\geq \overline{d}_{\mu}(y)\qquad \textrm{for}\;\;\mu\textrm{-a.e.}\,\;x\in X.
\end{equation}

Furthermore, even if $\mu$ is only a probability measure on $X$, Theorem~10 in~\cite{Galatolo} states that for each $x\in X$, one has $\underline{R}(x,y)\geq \underline{d}_{\mu}(y)$ and $\overline{R}(x,y)\geq \overline{d}_{\mu}(y)$ for $\mu$-a.e.$\;y\in X$.

\begin{defi}
\label{genericprop}
A subset $\mathcal{R}$ of a topological space $X$ is said to be residual if $\mathcal{R}\supset\bigcap_{k\in \N}U_k$, where for each $k\in\N$, $U_k$ is open and dense. A topological space $X$ is a Baire space if every residual subset of $X$ is dense in $X$ (by Baire Category Theorem, every complete metric space is a Baire space).

A property $\mathbb{P}$ is said to be generic in $X$ if there
exists a residual subset $\mathcal{R}$ of $X$ such that every element $x\in \mathcal{R}$ satisfies property $\mathbb{P}$.
\end{defi}



\subsection{Main results}

Our first result establishes that if $X=\prod_{-\infty}^\infty M$ is endowed with any metric compatible with the product topology for which $T$ is Lipshitz continuous (let, for instance, $d$ be given by~\eqref{metric1}), and if $M$ is any infinite Polish metric space (that is, $M$ may be a countable or an uncountable set), then the sets of invariant measures with (i) infinite packing dimension, (ii) infinite upper recurrence rate, for a.e. $x\in X$, and (iii) infinite upper quantitative time indicator for a.e. $(x,y)\in X\times X$, are residual subsets of $\M(T)$. 

\begin{teo}
\label{teocentral1}
Let $(X,T,\mathcal{B})$ be the full-shift dynamical system over $X=\prod_{-\infty}^{+\infty}M$, where the alphabet $M$ is any infinite Polish metric space and $X$ is endowed with any metric compatible with the product topology for which $T$ is Lipshitz continuous. Then, 
\begin{enumerate}
\item[1.] the set $PD:=\{ \mu\in \M(T) \mid \dim_{P}(\mu)= +\infty\}$ is a dense $G_{\delta}$ subset of $\mathcal{M}(T)$;
\item[2.] the set $\overline{\mathcal{R}}=\{ \mu\in \M(T) \mid \overline{R}(x)= +\infty,$ for $\mu\textrm{-}a.e.\,x\}$ is a dense $G_{\delta}$ subset of $\mathcal{M}(T)$;
\item[3.] the set $\overline{\mathscr{R}}=\{ \mu\in \M(T) \mid \overline{R}(x,y)= +\infty,$ for $(\mu\times\mu)\textrm{-}a.e.\,(x,y)\in X\times X\}$ is residual in $\mathcal{M}(T)$.
\end{enumerate}
\end{teo}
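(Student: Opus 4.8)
Here is how I would attack Theorem~\ref{teocentral1}. The plan is to obtain parts~(2) and (3) from part~(1), so the real work is in (1), and there in the \emph{density} of $PD$. For the measure-theoretic core of (1) I would isolate one general fact: \emph{if $\mu\in\M(T)$ is ergodic with $h_\mu(T)=+\infty$, then $\overline d_\mu(x)=+\infty$ for $\mu$-a.e.\ $x$, hence $\mu\in PD$.} Since $T$ is $K$-Lipschitz one has $B(x,\ve K^{-n})\subseteq B_n(x,\ve):=\{y:d(T^ix,T^iy)<\ve,\ 0\le i<n\}$, so $-\log\mu(B(x,\ve K^{-n}))\ge-\log\mu(B_n(x,\ve))$; dividing by $-\log(\ve K^{-n})$, taking $\limsup_n$ and then $\ve\to0$, the Brin--Katok formula gives $\overline d_\mu(x)\ge h_\mu(T)/\log K=+\infty$ for $\mu$-a.e.\ $x$ (for the metric~\eqref{metric1} one may instead argue directly from Shannon--McMillan--Breiman, $\ve$-balls being essentially cylinders of length $\asymp\log(1/\ve)$). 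Combined with Proposition~\ref{BGT} (which makes $\dim_P(\mu)=+\infty$ equivalent to $\overline d_\mu(\cdot)=+\infty$ $\mu$-a.e.) this reduces (1) to: (a) $PD$ is $G_\delta$, and (b) $PD$ is dense. Statement~(a) is taken over from \cite{AS,AS2} essentially verbatim — one writes $PD=\bigcap_k\mathcal O_k$ with $\mathcal O_k$ open, using only the characterization of $\dim_P^-(\mu)$ via the essential infimum of $\overline d_\mu$ and the weak-topology semicontinuity of the ball-functionals $\mu\mapsto\mu\big(\overline B(\cdot,\ve)\big)$; no regularity of the alphabet is used there.

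For (b), by the fact above it is enough to show that \emph{ergodic measures of infinite entropy are weakly dense} in $\M(T)$. Fix $\mu_0$ and a weak neighbourhood $V$; testing weak convergence against uniformly continuous bounded functions, we may assume $V$ depends only on coordinates in a window $[-N,N]$, so it suffices to make $\pi_{N*}\mu$ close to $\pi_{N*}\mu_0$. First, periodic measures are weakly dense in $\M(T)$: finitely supported rational measures are dense in $\mathcal P(M^{2Q+1})$ for every $Q$, and concatenating such blocks into a periodic word $p$ of period $P$ yields, by stationarity of $\mu_0$ (up to $O(N/Q)$ boundary errors), a periodic measure $\mu_p:=\frac1P\sum_{j=0}^{P-1}\delta_{T^jp}$ with $\pi_{N*}\mu_p$ arbitrarily close to $\pi_{N*}\mu_0$ — the infinitude of $M$ is no obstruction. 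Next, since $M$ is infinite, fix a probability measure $\kappa$ on $M$ supported on countably many points disjoint from $\{p_0,\dots,p_{P-1}\}$ and with $-\sum_m\kappa(\{m\})\log\kappa(\{m\})=+\infty$. For $\gamma\in(0,1)$ set $\rho_r:=(1-\gamma)\delta_{p_r}+\gamma\kappa$ ($r\in\Z/P\Z$), $\tilde\nu_\gamma:=\bigotimes_{i\in\Z}\rho_{i\bmod P}$, and
\[\mu_\gamma\ :=\ \frac1P\sum_{j=0}^{P-1}(T^j)_{*}\tilde\nu_\gamma\,.\]

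One checks: $(T^P)_{*}\tilde\nu_\gamma=\tilde\nu_\gamma$, so $\mu_\gamma\in\M(T)$; each $(T^j)_{*}\tilde\nu_\gamma$ is Bernoulli over the alphabet $M^P$, hence $T^P$-ergodic, and $T$ permutes the $P$ of them cyclically, so every $T$-invariant set gets the same value $0$ or $1$ from all of them — that is, $\mu_\gamma$ is $T$-\emph{ergodic}. As $\gamma\downarrow0$, $\rho_r\to\delta_{p_r}$, whence $\tilde\nu_\gamma\to\delta_p$ and $\mu_\gamma\to\mu_p$ weakly, so $\mu_\gamma\in V$ for small $\gamma$. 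Finally $h_{\mu_\gamma}(T)=h_{\tilde\nu_\gamma}(T)=\tfrac1P\sum_r H(\rho_r)=+\infty$, because $p_r\notin\supp\kappa$ makes the atoms of $\rho_r$ explicit and forces $H(\rho_r)=+\infty$ for each $r$. Thus $\mu_\gamma$ is ergodic with infinite entropy, so $\mu_\gamma\in PD\cap V$; this gives (b) and completes (1).

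For (2): by the relation between upper recurrence rates and upper local dimensions (\cite{Barreiral2001}; see the discussion in \cite{AS}), $\overline R(x)\ge\overline d_\mu(x)$ for $\mu$-a.e.\ $x$ and every $\mu\in\M(T)$, so $PD\subseteq\overline{\mathcal R}$ and $\overline{\mathcal R}$ is dense; one then verifies as in \cite{AS} that $\overline{\mathcal R}$ is a $G_\delta$ subset of $\M(T)$. For (3): Theorem~4 in \cite{Galatolo} gives, for each fixed $y\in X$, $\overline R(x,y)\ge\overline d_\mu(y)$ for $\mu$-a.e.\ $x$; if $\mu\in PD$ then $\overline d_\mu(y)=+\infty$ for $\mu$-a.e.\ $y$, so by Fubini $\{(x,y):\overline R(x,y)=+\infty\}$ has full $(\mu\times\mu)$-measure, i.e.\ $\mu\in\overline{\mathscr R}$; hence $PD\subseteq\overline{\mathscr R}$ is residual. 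The main obstacle is the density step of (1) for a merely infinite (possibly discrete) alphabet: the arguments of \cite{AS,AS2} spread mass \emph{continuously} inside arbitrarily small balls of $M$, which is impossible when $M$ has isolated points. The replacement is to inject, at a positive density of coordinates, a \emph{discrete} noise distribution $\kappa$ of \emph{infinite} Shannon entropy, and the two delicate points are (i) keeping this perturbation of a periodic measure simultaneously $T$-invariant \emph{and} ergodic — a non-ergodic mixture would retain periodic atoms of dimension $0$, preventing the conclusion $\mu\in PD$ — and (ii) the implication ``infinite entropy $\Rightarrow$ infinite packing dimension'', which for a general Lipschitz-compatible metric is exactly the Brin--Katok comparison above rather than the elementary cylinder estimate that suffices for~\eqref{metric1}.
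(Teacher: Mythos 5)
Your treatment of item~1 is correct and takes a genuinely different route to density than the paper does: you construct, inside any weak neighbourhood, an explicitly ergodic measure of \emph{infinite} entropy (a cyclic product $\frac1P\sum_j(T^j)_*\bigotimes_i\rho_{i\bmod P}$ with an infinite-entropy discrete noise $\kappa$), and then convert infinite entropy into $\overline{d}_\mu=+\infty$ a.e.\ via the Lipschitz inclusion $B(x,\ve\Lambda^{-n})\subset B(x,n,\ve)$ and a Brin--Katok-type lower bound; your ergodicity and entropy computations for $\mu_\gamma$ check out, and the $G_\delta$ part is correctly delegated to the earlier papers. The paper instead only needs ergodic measures of entropy $>K$ for arbitrary finite $K$: it uses Parthasarathy's density of periodic measures on a compactification (Lemma~\ref{denseperiodic}), mixes in a finite uniform Bernoulli factor, and then invokes entropy-density of ergodic measures coming from specification (Proposition~\ref{central1}), followed by Lemma~\ref{dimpos01} and an intersection over $L$. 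Your construction is more self-contained (no appeal to entropy-density/specification) and even stronger at the level of the single measure produced; either route is fine for item~1, and your item~3 (Galatolo's inequality~\eqref{Gala} plus Fubini) coincides with the paper's argument.

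There is, however, a genuine gap in item~2. You claim, citing \cite{Barreiral2001}, that $\overline{R}(x)\ge\overline{d}_\mu(x)$ for $\mu$-a.e.\ $x$ and every invariant $\mu$, and deduce $PD\subset\overline{\mathcal R}$. The Barreira--Saussol inequality for \emph{return} times goes the other way, $\overline{R}(x)\le\overline{d}_\mu(x)$ a.e.\ (and Galatolo's lower bound $\overline{R}(x,y)\ge\overline{d}_\mu(y)$ concerns waiting times with a \emph{fixed} target $y$; setting $y=x$ is not permitted because the exceptional set depends on $y$). As a general statement your inequality is false: for $T=\mathrm{id}$ (Lipschitz, every measure invariant) one has $\tau_r(x)=1$, hence $\overline{R}\equiv 0$, while $\overline{d}_\mu$ can be positive or infinite. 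So infinite packing dimension alone does not force $\overline{R}(x)=+\infty$, and item~2 does not follow from item~1 by this route. The paper closes this step differently: it proves density of $\{\mu\in\M_e\mid \mu\textrm{-}\essinf\overline{R}(x)\ge L\}$ (Proposition~\ref{denseretpos}, adapting Proposition~3.3 of \cite{AS}), where the \emph{lower} bound on return rates comes from entropy via an Ornstein--Weiss-type return-time estimate for Bowen balls combined with the Lipschitz inclusion, i.e.\ roughly $\tau_{\ve\Lambda^{-n}}(x)\gtrsim e^{n h_\mu(T)}$ a.e., giving $\overline{R}(x)\ge h_\mu(T)/\log\Lambda$. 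Your infinite-entropy ergodic measures would serve perfectly in such an argument, but that argument (or a citation to it) must replace the dimension inequality you invoked; as written, item~2 is unsupported.
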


Thus, Theorem~\ref{teocentral1} extends the results stated in items IV, VI and VIII of Theorem~1.1 in~\cite{AS} to the case where $M$ is a \textit{countable} Polish metric space.

\begin{rek} It is also possible to show that if $M$ is any Polish metric space    and if $X=\prod_{-\infty}^{+\infty}M$ is endowed with any metric compatible with the product topology, then each one of the following sets is a dense $G_{\delta}$ subset of $\mathcal{M}(T)$:
  \begin{enumerate}
  \item $HD:=\{ \mu\in \M(T) \mid \dim_{H}(\mu)= 0\}$ 
    (this is, in fact, a consequence of the results presented in Section~2 in~\cite{AS});
  \item $\underline{\mathcal{R}}:=\{ \mu\in \M(T) \mid \underline{R}(x)=0,$ for $\mu\textrm{-}a.e.\,x\}$ 
    (this is Theorem~1.1-V in~\cite{AS});
  \item $\underline{\mathscr{R}}=\{ \mu\in \M(T) \mid \underline{R}(x,y)=0,$ for $(\mu\times\mu)\textrm{-}a.e.\,(x,y)\in X\times X\}$ 
    (this is Theorem~1.1-VII in~\cite{AS}).  
  \end{enumerate}
  
  Therefore, if $M$ is any infinite Polish metric space and if $X$ is endowed with any metric compatible with the product topology for which $T$ is Lipshitz continuous, it follows that
  \[
  HD\cap PD\cap\underline{\mathcal{R}}\cap\overline{\mathcal{R}}\cap\underline{\mathscr{R}}\cap\overline{\mathscr{R}}\] is a residual subset of $\mathcal{M}(T)$. 

  We refer to Introduction in~\cite{AS} for a discussion about the dynamical implications of this result.
  \end{rek}

The next result establishes that if the alphabet $M$ is a compact countable metric space and 
if $X$ is endowed with the metric given by~\eqref{metric2}, then generically for each $q\ge 1$, $\mu\in \M(T)$ has $q$-upper generalized fractal dimension equal to infinite. 

\begin{teo}
\label{teocentral2}
Let $(X,T,\mathcal{B})$ be the full-shift dynamical system over $X=\prod_{-\infty}^{+\infty}M$, where the alphabet $M$ is a countable compact set and $X$ is endowed with the metric given by~\eqref{metric2}. Then,  
\begin{enumerate}
\item[1.] for each $q>1$, 
  $CD:=\{ \mu\in \M(T) \mid D_{\mu}^+(q)=+\infty\}$  is a dense $G_\delta$ subset of $\mathcal{M}(T)$; 
\item[2.] $\{ \mu\in \M(T) \mid D_{\mu}^+(1)=+\infty\}$ is a residual subset of $\mathcal{M}(T)$.  
\end{enumerate}
\end{teo}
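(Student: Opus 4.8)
The plan is to reduce Theorem~\ref{teocentral2} to a construction of a dense family of "good" measures and then apply the standard Baire-category machinery already used in~\cite{AS2}. Since $D^+_\mu(q)=+\infty$ means the $\limsup$ defining $D^+_\mu(q)$ is infinite, for each fixed $q>1$ I would write
\[
CD=\bigcap_{N\in\N}\left\{\mu\in\M(T)\;\Big|\;\limsup_{\ve\downarrow 0}\frac{\log I_\mu(q,\ve)}{(q-1)\log\ve}>N\right\},
\]
and observe that each set in the intersection contains a countable intersection of sets of the form $G_{N,k}=\{\mu\mid \log I_\mu(q,\ve_k)/((q-1)\log\ve_k)>N\text{ for some }\ve_k\text{ in a fixed null sequence}\}$. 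The first step is therefore to show each such sublevel set is \emph{open} in the weak topology: this follows because $\ve\mapsto I_\mu(q,\ve)$ is, for $q>1$ and fixed $\ve$ outside a countable set of "bad" radii, lower semicontinuous in $\mu$ (using that $\mu\mapsto\mu(B(x,\ve))$ is lower semicontinuous and Fatou/portmanteau applied to the integral $\int\mu(B(x,\ve))^{q-1}d\mu$, with the compactness of $M$ — hence of $\supp\mu\subset X$ with metric~\eqref{metric2} — ensuring the integrand is well controlled). One must be slightly careful and work with radii $\ve$ that are continuity points; this is the same technical point handled in~\cite{AS2}.

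The crux is \emph{density}: I need to show that measures with arbitrarily large (eventually infinite) upper $q$-generalized fractal dimension are dense in $\M(T)$. Because $M$ is countable and compact, it is homeomorphic to a convergent sequence together with its limit points (a countable compact metric space), so $M$ has many isolated points accumulating slowly; the sub-exponential metric~\eqref{metric2} is exactly engineered so that cylinder sets in $X$ have diameters that decay only sub-exponentially in the number of coordinates fixed. The strategy is: given a target measure $\mu_0\in\M(T)$ and a weak-neighborhood of it, I approximate $\mu_0$ by a measure supported on finitely many periodic orbits (these are dense in $\M(T)$ for the full shift), and then \emph{perturb} it by replacing it with a Bernoulli-type (or Markov) measure that spreads mass over a huge but finite sub-alphabet $\{1,\dots,m\}\subset M$ with carefully chosen probabilities $p_1,\dots,p_m$. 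For such a product measure the energy integral $I_\mu(q,\ve)$ over a cylinder of "radius" comparable to $\ve$ behaves like $\big(\sum_i p_i^q\big)^{n(\ve)}$ where $n(\ve)\to\infty$ sub-exponentially slower than $-\log\ve\to\infty$; choosing the $p_i$ close to uniform on a large alphabet makes $\sum_i p_i^q\approx m^{1-q}$ small, forcing $\log I_\mu(q,\ve)/((q-1)\log\ve)$ to be large along a sequence $\ve\downarrow 0$, while the product structure keeps the measure weakly close to the original (mass on high-index coordinates is negligible, and the marginal on finitely many coordinates can be matched). Iterating $m\to\infty$ gives $D^+_\mu(q)=+\infty$ on a dense set, and intersecting over the null sequence of radii and over $N$ yields the dense $G_\delta$.

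For part~2 ($q=1$, entropy dimension), I would either repeat the argument with the logarithmic integrand $\int\log\mu(B(x,\ve))d\mu(x)$ — for which the semicontinuity in $\mu$ is more delicate, so one only gets that the relevant sets are $G_\delta$ rather than open, hence "residual" rather than "dense $G_\delta$" — or, more cheaply, invoke Proposition~\ref{BGT1}: since $\supp\mu$ is compact, $D^+_\mu(1)\ge D^+_\mu(q)$ for every $q>1$, so $\bigcap_{q>1,\,q\in\Q}CD(q)\subset\{\mu\mid D^+_\mu(1)=+\infty\}$, and a countable intersection of dense $G_\delta$ sets is residual. The main obstacle I anticipate is the density step — specifically, verifying that the product/Markov perturbation simultaneously (a) stays in a prescribed weak-neighborhood of $\mu_0$ and (b) produces the required blow-up of $I_\mu(q,\ve)$ along some sequence of radii; this requires matching the sub-exponential decay rate of cylinder diameters under~\eqref{metric2} against the combinatorial growth $\big(\sum p_i^q\big)^{-n}$, exactly the estimate that makes the countability (rather than perfectness) of $M$ usable, replacing the Cantor-set constructions of~\cite{AS2,AS}.
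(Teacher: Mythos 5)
Your overall skeleton coincides with the paper's: the $G_\delta$ part is delegated to Proposition~2.4 of \cite{AS2}, the real content is density, the decisive mechanism is that under the sub-exponential metric~\eqref{metric2} a ball of radius $\ve$ controls $n_0(\ve)\gtrsim(1/\ve-1)^{1/2}$ coordinates (Lemma~\ref{inc00} and Proposition~\ref{propcorrelation}), so \emph{any} fixed exponential decay rate $a<1$ of the modified energy in $n$ already forces $D^+(q)=+\infty$, and item~2 is deduced from item~1 via Proposition~\ref{BGT1} exactly as in the paper. However, your density step has a genuine gap. A near-uniform Bernoulli measure on a large finite sub-alphabet cannot be kept in a prescribed weak neighborhood of an arbitrary $\mu_0$ (or of an approximating periodic measure): weak closeness requires matching the joint distributions of blocks of coordinates, and an i.i.d.\ measure cannot reproduce the correlations of, say, a periodic orbit measure no matter how the single-site probabilities $p_1,\dots,p_m$ are chosen; ``matching the marginal on finitely many coordinates'' does not resolve this. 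Moreover, making $\sum_i p_i^q\approx m^{1-q}$ tiny is both unnecessary (a constant $a<1$ suffices, precisely because $n_0(\ve)\gg-\log\ve$) and in tension with closeness. Note also that no convex-combination shortcut is available: any atom forces $D^+_\mu(q)=0$ for $q>1$, so the perturbing measure must be globally non-atomic while shadowing the periodic orbit.

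What the paper actually does (Proposition~\ref{central2}) is Sigmund's construction, and the hard work is exactly where your sketch stops. Lemma~\ref{L6S2} produces a periodic point $w$ of period $\tau$ whose entries are \emph{isolated} points of $M$ (this is where countability/compactness of $M$ enters: it gives an $\ve_0>0$ below which $B^n(x,\ve)$ is literally a cylinder) and with $\tau/s$ large enough that relation~\eqref{tau} holds, where $s$ is the number of distinct symbols of $w$. One then runs the nearly deterministic Markov chain (transition $1-\kappa$ along the cycle, $\kappa/(\tau-1)$ otherwise) on $\tau$ \emph{distinct} auxiliary symbols and pushes it forward by a collapsing map onto the symbols of $w$; this keeps the measure in the given neighborhood, and the estimate of $I^{n}_{\rho_*}(q,\ve)$ is not $(\sum_i p_i^q)^{n}$ but $\frac{A^q}{\tau^{q-1}}\bigl((s-1)^{1-q}\kappa^q+(1-\kappa)^q\bigr)^{2n}$, where the multiplicity factor $A=\bigl(\prod_i m_i\bigr)^{(2n+1)/\tau+1}$ accounts for the many auxiliary cylinders collapsing onto one $w$-cylinder; beating $A^q$ is precisely what the lower bound on $\tau/s$ in~\eqref{tau} is for, and this combinatorial control (the $P$/$Q_p$ decomposition) is absent from your plan. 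A minor additional point: for the $G_\delta$ part you invoke lower semicontinuity of $\mu\mapsto I_\mu(q,\ve)$, but since $D^+_\mu(q)>N$ corresponds to $I_\mu(q,\ve)<\ve^{N(q-1)}$ along small $\ve$, openness requires \emph{upper} semicontinuity (closed-ball energies / well-chosen radii), which is the content of the result quoted from \cite{AS2}.
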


This partially settles the question posed in Remark~1.1 in~\cite{AS2}, i.e., that $CD$ is also a dense $G_\delta$ subset of $\mathcal{M}(T)$ in case $M$ is a countable compact metric space and $X$ is endowed with a sub-exponential metric (like~\eqref{metric2}; the proof of Proposition~\ref{central2} fails if the metric is exponential, as discussed in Remark~3.3 in~\cite{AS2}).  

\begin{rek} Theorem~\ref{teocentral2} is only valid for the case where $M$ is a countable compact metric space; this hypothesis is necessary for the proof of Proposition~\ref{central2}. Actually, one needs that $M$ has a countable collection of isolated points, with at most a finite number of accumulation points (given that $M$ is compact, $M$ contains at least one accumulation point; note that the set of accumulation points in $M$ cannot be infinite, otherwise $M$ would be uncountable).
\end{rek}






\begin{rek} It follows from Theorem~1.2 in~\cite{AS2} that if $M$ is any compact metric space  and if $X=\prod_{-\infty}^{+\infty}M$ is endowed with any metric compatible with the product topology, then for each $s\in(0,1)$,
  \[\{\mu\in \M(T) \mid D_\mu^-(s)= 0\}\] is a dense $G_{\delta}$ subset of $\mathcal{M}(T)$; this is, in fact, a consequence of the results presented in Section~2 of~\cite{AS}.

  Therefore, if $M$ is any  countable compact metric space and if $X$ is endowed with the metric~\eqref{metric2}, it follows from Proposition~\ref{BGT1} and Theorem~\ref{teocentral2} that for each $0<s\le 1$ and each $q\ge 1$,
  \[\{ \mu\in \M(T) \mid D_\mu^-(s)=0\;\textrm{and}\;D_\mu^+(q)=+\infty\}\] is a residual subset of $\mathcal{M}(T)$.

  We refer to the discussion after Theorem~1.3 in~\cite{AS2} for the dynamical consequences of this result.
  \end{rek}  

Finally, we have something to say about dynamical systems that are conjugated to full-shift systems. The idea here is to present a sufficient condition for a dynamical system to have a residual set of invariant measures whose packing and upper $q$-generalized ($q\ge 1$) dimensions are infinite, as well as the upper recurrence rate of a point and the upper quantitative waiting time indicator. Naturally, there are some important examples of dynamical systems which are conjugated to full-shift systems over infinite alphabets, like infinite interval exchange transformations (see~\cite{Iommi2013,Lopez2017} and references therein). Here, in order to preserve the positive dimensional and recurrence quantities discussed for the full-shift over countable alphabets, one has to require more than topological conjugacy, something that usually is very hard to obtain. Nevertheless, if such kind of conjugation is at hand, then the following results provide the aforementioned properties. 

\begin{teo}\label{correlationconj}
  Let $(X,T)$ be a topological dynamical system and let $(Y,\tilde{T})$ be the full-shift system over the alphabet $M$, with $M$ a countable compact metric space. Let $\varphi: X \rightarrow Y$ be an $\alpha$-Hölder continuous bijective map whose inverse map is continuous. If $\varphi$ is a conjugation between $(X,T)$ and $(Y,\widetilde{T})$, that is, if
\[\widetilde{T}=\varphi \circ T \circ \varphi^{-1},
\]
then for each $q>1$,
  \[\mathcal{D}^+_X(T):=\{\mu_X\in\mathcal{M}_X(T)\mid D_{\mu_X}^+(q)=+\infty\}\] is a dense $G_\delta$ subset of $\mathcal{M}_X(T)$. 
\end{teo}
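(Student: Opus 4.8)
The plan is to transport the full-shift result (Theorem~\ref{teocentral2}) to $(X,T)$ via the conjugation $\varphi$, using the fact that an $\alpha$-Hölder conjugation with continuous inverse induces a homeomorphism $\varphi_*:\mathcal{M}_X(T)\to\mathcal{M}_Y(\widetilde T)$, $\varphi_*\mu_X=\mu_X\circ\varphi^{-1}$, between the two spaces of invariant measures endowed with the weak topology. Since $\varphi_*$ is a homeomorphism, it maps dense $G_\delta$ sets to dense $G_\delta$ sets; hence it suffices to show that $\varphi_*^{-1}\bigl(\mathcal{D}^+_Y(\widetilde T)\bigr)\subset\mathcal{D}^+_X(T)$, where $\mathcal{D}^+_Y(\widetilde T)=\{\nu\in\mathcal{M}_Y(\widetilde T)\mid D^+_\nu(q)=+\infty\}$ is dense $G_\delta$ in $\mathcal{M}_Y(\widetilde T)$ by Theorem~\ref{teocentral2}. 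Concretely: if $\mu_X\in\mathcal{M}_X(T)$ is such that $\nu:=\varphi_*\mu_X$ has $D^+_\nu(q)=+\infty$, I must deduce $D^+_{\mu_X}(q)=+\infty$. Actually one gets the inclusion in the other direction too once the dimension comparison is established, but only the one needed for ``$\supset$ a dense $G_\delta$'' is required.

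The analytic core is a comparison of the energy functions $I_{\mu_X}(q,\varepsilon)$ and $I_\nu(q,\varepsilon)$ under the Hölder map. Write $\varphi$ being $\alpha$-Hölder as $r_Y(\varphi(x),\varphi(x'))\le C\,r_X(x,x')^\alpha$ for all $x,x'$ and some $C>0$. Then for any $x\in X$ and $\varepsilon>0$, $\varphi\bigl(B_X(x,\varepsilon)\bigr)\subset B_Y(\varphi(x),C\varepsilon^\alpha)$, so that
\[
\mu_X\bigl(B_X(x,\varepsilon)\bigr)=\nu\bigl(\varphi(B_X(x,\varepsilon))\bigr)\le \nu\bigl(B_Y(\varphi(x),C\varepsilon^\alpha)\bigr).
\]
Raising to the power $q-1>0$ and integrating $d\mu_X(x)$, and using the change of variables $y=\varphi(x)$ (so $d\mu_X=\varphi_*^{-1}d\nu$, i.e. $\int_X g(\varphi(x))\,d\mu_X(x)=\int_Y g(y)\,d\nu(y)$), yields
\[
I_{\mu_X}(q,\varepsilon)\le \int_Y \nu\bigl(B_Y(y,C\varepsilon^\alpha)\bigr)^{q-1}\,d\nu(y)=I_\nu(q,C\varepsilon^\alpha).
\]
Now take logarithms, divide by $(q-1)\log\varepsilon$ (which is negative for small $\varepsilon$, so the inequality flips), and pass to the limsup:
\[
D^+_{\mu_X}(q)=\limsup_{\varepsilon\downarrow 0}\frac{\log I_{\mu_X}(q,\varepsilon)}{(q-1)\log\varepsilon}\ \ge\ \limsup_{\varepsilon\downarrow 0}\frac{\log I_\nu(q,C\varepsilon^\alpha)}{(q-1)\log\varepsilon}.
\]
Since $\log(C\varepsilon^\alpha)=\alpha\log\varepsilon+\log C$, a routine substitution $\delta=C\varepsilon^\alpha$ gives that the right-hand side equals $\tfrac1\alpha D^+_\nu(q)$. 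Hence $D^+_{\mu_X}(q)\ge \tfrac1\alpha D^+_\nu(q)$, and if $D^+_\nu(q)=+\infty$ then $D^+_{\mu_X}(q)=+\infty$, which is exactly the desired inclusion.

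It remains to justify the two structural facts used above. First, that $\varphi_*:\mathcal{M}_X(T)\to\mathcal{M}_Y(\widetilde T)$ is a well-defined homeomorphism: invariance is preserved because $\widetilde T\circ\varphi=\varphi\circ T$ forces $(\varphi_*\mu_X)\circ\widetilde T^{-1}=\varphi_*(\mu_X\circ T^{-1})=\varphi_*\mu_X$; continuity of $\varphi_*$ follows from continuity of $\varphi$ (if $f\in C_b(Y)$ then $f\circ\varphi\in C_b(X)$, so $\int f\,d(\varphi_*\mu_\alpha)=\int f\circ\varphi\,d\mu_\alpha\to\int f\circ\varphi\,d\mu=\int f\,d(\varphi_*\mu)$); the inverse map $\nu\mapsto\nu\circ\varphi$ is continuous by the same argument applied to $\varphi^{-1}$, which is continuous by hypothesis. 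Second, that $\varphi_*$ maps dense $G_\delta$ sets to dense $G_\delta$ sets — immediate for any homeomorphism. Combining: $\mathcal{D}^+_X(T)\supset\varphi_*^{-1}\bigl(\mathcal{D}^+_Y(\widetilde T)\bigr)$, and the latter is a dense $G_\delta$ subset of $\mathcal{M}_X(T)$ by Theorem~\ref{teocentral2} and the homeomorphism property; a fortiori $\mathcal{D}^+_X(T)$ contains a dense $G_\delta$, hence (being a $G_\delta$ itself, which should be checked via the standard characterization of $\{D^+_{\mu}(q)=+\infty\}$ as a countable intersection of open sets, exactly as in the proof of Theorem~\ref{teocentral2}) it is a dense $G_\delta$.

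I expect the only genuinely delicate point to be confirming that $\mathcal{D}^+_X(T)$ is itself $G_\delta$ in $\mathcal{M}_X(T)$ (not merely that it contains one): the set $\{\mu\mid D^+_\mu(q)>N\}$ must be shown open for each $N$, which requires that $\mu\mapsto I_\mu(q,\varepsilon)$ have suitable semicontinuity properties in the weak topology on the (possibly non-compact) space $X$. In the compact-alphabet full-shift setting this is handled in~\cite{AS2}; here, since $Y$ is compact and $\varphi$ is a homeomorphism onto $Y$, $X$ is compact as well, so the same machinery applies verbatim after transport, and no new difficulty arises. The Hölder comparison itself is elementary; the homeomorphism of measure spaces is standard. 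So the proof is essentially a bookkeeping exercise once the energy-function inequality $I_{\mu_X}(q,\varepsilon)\le I_\nu(q,C\varepsilon^\alpha)$ is in place.
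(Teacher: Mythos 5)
Your argument is correct and is essentially the paper's own proof: the paper packages exactly your ingredients into Proposition~\ref{correlation} — the induced bijection between $\mathcal{M}_Y(\widetilde T)$ and $\mathcal{M}_X(T)$ (there written as $\psi(\nu_Y)=\nu_Y\circ\varphi$, the inverse of your $\varphi_*$), the fact that it carries dense $G_\delta$ sets to dense $G_\delta$ sets, and the energy-function/Hölder comparison of $D^+(q)$ — and then combines it with Theorem~\ref{teocentral2}; your $C_b$-functional argument that $\varphi_*$ is a weak-topology homeomorphism is a slightly lighter route than the paper's separate verifications (continuity of $\psi$ via the metric $\vec d$ and uniform continuity of $\varphi^{-1}$, openness via the Hölder bound), and your explicit remark that the $G_\delta$-ness of $\mathcal{D}^+_X(T)$ itself must be supplied by the machinery of Proposition~2.4 in~\cite{AS2} (legitimate here since $X\cong Y$ is compact) is, if anything, more careful than the paper's wording. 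The only slip is the constant produced by the substitution $\delta=C\varepsilon^\alpha$: since $\log\varepsilon=(\log\delta-\log C)/\alpha$, the right-hand side is $\alpha D^+_{\nu}(q)$ (matching Proposition~\ref{correlation}), not $\tfrac1\alpha D^+_{\nu}(q)$ — harmless for your conclusion, because either positive constant times $D^+_\nu(q)=+\infty$ still yields $D^+_{\mu_X}(q)=+\infty$.
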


\begin{teo}\label{packingconjC}
Let $(X,T)$ and $(Y,\tilde{T})$ be such that both $X$ and $Y$ are Polish metric spaces and both $T$ and $\tilde{T}$ are continuous. Let $\varphi: X \rightarrow Y$ be a locally $\alpha$-Hölder continuous bijective map whose inverse map is uniformly continuous. If $\varphi$ is a conjugation between $(X,T)$ and $(Y,\widetilde{T})$, then: 
\begin{enumerate}
  \item $PD_X(T):=\{\mu_x\in\mathcal{M}_X(T)\mid \dim_P(\mu_X)=+\infty\}$ is a  dense $G_\delta$ subset of $\mathcal{M}_X(T)$;
\item  $\overline{\mathcal{R}}(T):=\{\mu\in \M(T) \mid \overline{R}(x;T)= +\infty,$ for $\mu\textrm{-}a.e.\,x\}$ is a dense $G_\delta$ subset of $\mathcal{M}(T)$;
  \item $\overline{\mathscr{R}}(T)=\{\mu\in \M(T) \mid \overline{R}(x,y;T)= +\infty,$ for $(\mu\times\mu)\textrm{-}a.e.\,(x,y)\in X\times X\}$ is a residual subset of $\mathcal{M}(T)$.
\end{enumerate}
\end{teo}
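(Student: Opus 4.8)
The plan is to transport the generic dimensional/recurrence statements from the full-shift system $(Y,\widetilde T)$ — which are already available via Theorem~\ref{teocentral1} (for the packing dimension and recurrence items) and, combined with the $\alpha$-Hölder behaviour, the analogue of Theorem~\ref{correlationconj} — through the conjugation $\varphi$. First I would observe that $\varphi$ induces a push-forward map $\varphi_*:\mathcal{M}_X(T)\to\mathcal{M}_Y(\widetilde T)$, $\mu\mapsto\varphi_*\mu$, and that it is a homeomorphism with respect to the weak topologies: continuity of $\varphi$ and of $\varphi^{-1}$ gives continuity of $\varphi_*$ and of $(\varphi^{-1})_*=(\varphi_*)^{-1}$ by the change-of-variables formula $\int f\,d(\varphi_*\mu)=\int f\circ\varphi\,d\mu$ applied to bounded continuous $f$. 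Since a homeomorphism maps dense $G_\delta$ sets to dense $G_\delta$ sets, it suffices to show that $\varphi_*$ (or its inverse) carries each of the target sets onto the corresponding full-shift set that we already know is dense $G_\delta$ (or residual) in $\mathcal{M}_Y(\widetilde T)$ by Theorem~\ref{teocentral1}.

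The geometric heart of the argument is the effect of a locally $\alpha$-Hölder map with uniformly continuous inverse on local dimensions. The key lemma I would prove (or quote from~\cite{AS}, where the $\alpha$-Hölder conjugation discussion is promised in the introduction) is: if $\varphi$ is locally $\alpha$-Hölder then for every $x$, $\overline d_{\varphi_*\mu}(\varphi(x))\le \alpha^{-1}\,\overline d_{\mu}(x)$, equivalently $\overline d_\mu(x)\ge \alpha\,\overline d_{\varphi_*\mu}(\varphi(x))$; and if $\varphi^{-1}$ is uniformly continuous then balls in $Y$ pull back to sets of controlled diameter, which (via a Vitali-type comparison, or directly by monotonicity of $\mu$ under inclusion of a ball in a ball) yields the reverse-type estimate needed to conclude $\overline d_{\varphi_*\mu}(\varphi(x))=+\infty$ whenever $\overline d_\mu(x)=+\infty$ is \emph{not} what we want — rather, we want: $\dim_P(\varphi_*\mu)=+\infty \iff \dim_P(\mu)=+\infty$. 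Concretely, from $B(x,c\,r^{1/\alpha})\subset\varphi^{-1}(B(\varphi(x),r))$ (local Hölder) one gets $\mu(B(x,c r^{1/\alpha}))\le \varphi_*\mu(B(\varphi(x),r))$, hence $\overline d_{\varphi_*\mu}(\varphi(x))\le \alpha^{-1}\overline d_\mu(x)$; and from uniform continuity of $\varphi^{-1}$, $\varphi^{-1}(B(\varphi(x),r))\subset B(x,\omega(r))$ with $\omega(r)\to 0$, giving $\varphi_*\mu(B(\varphi(x),r))\le \mu(B(x,\omega(r)))$ and therefore $\overline d_{\varphi_*\mu}(\varphi(x))\ge \limsup \tfrac{\log\mu(B(x,\omega(r)))}{\log r}$. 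Combining with Proposition~\ref{BGT}, $\dim_P^\pm$ of the two measures are simultaneously $+\infty$; this proves item~1 once we know $PD_Y(\widetilde T)$ is dense $G_\delta$ by Theorem~\ref{teocentral1} (valid since a conjugation to a full-shift forces $T$ to inherit, up to the Hölder distortion, the required behaviour, and the full-shift side uses any metric for which $\widetilde T$ is Lipschitz — which we may fix to be~\eqref{metric1}).

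For items~2 and~3 I would run the same transport on the recurrence quantities directly, using their \emph{invariance} properties under the conjugation rather than going through dimensions, which is cleaner. Since $\widetilde T^n=\varphi\circ T^n\circ\varphi^{-1}$, we have $\widetilde T^n(\varphi x)\in\overline B(\varphi x,r)$ iff $T^n x\in\varphi^{-1}(\overline B(\varphi x,r))$, and sandwiching $\varphi^{-1}(\overline B(\varphi x,r))$ between $\overline B(x,\omega(r))$ and $\overline B(x,c r^{1/\alpha})$ as above gives
\[
\tau_{c r^{1/\alpha}}(x;T)\le \tau_r(\varphi x;\widetilde T)\le \tau_{\omega(r)}(x;T),
\]
and similarly for the two-point quantity $\tau_r(x,y)$. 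Taking $\log$ and dividing by $-\log r$, the Hölder exponent contributes only a bounded multiplicative factor $\alpha^{\pm1}$ and the uniformly continuous inverse contributes a factor tending to $1$ after a change of variable $r\mapsto\omega(r)$ (here one uses that $-\log\omega(r)/(-\log r)\to$ a positive constant or $+\infty$; in the worst case one only needs it bounded away from $0$, which uniform continuity together with $\varphi^{-1}$ being a homeomorphism guarantees on compact-by-compact pieces — this is the one spot that requires care). Hence $\overline R(\varphi x;\widetilde T)=+\infty\iff \overline R(x;T)=+\infty$ and likewise for $\overline R(\cdot,\cdot)$; since $\varphi_*\mu$-a.e.\ statements pull back to $\mu$-a.e.\ statements (and $(\varphi\times\varphi)_*(\mu\times\mu)=\varphi_*\mu\times\varphi_*\mu$), the three target sets in the theorem are exactly the $\varphi_*$-preimages of $\overline{\mathcal R}$, $PD$, $\overline{\mathscr R}$ for the full-shift, which are dense $G_\delta$ (resp.\ residual) by Theorem~\ref{teocentral1}; applying the homeomorphism $\varphi_*^{-1}$ finishes the proof.

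The main obstacle I anticipate is controlling the \emph{inverse} distortion: local $\alpha$-Hölder continuity of $\varphi$ gives good one-sided inclusions, but to keep the dimension/recurrence values from collapsing to $0$ or jumping to $+\infty$ spuriously we need a matching lower bound on how $\varphi$ separates points, and merely assuming $\varphi^{-1}$ \emph{uniformly continuous} (not Hölder) means the modulus $\omega$ could be very bad, so that $-\log\omega(r)/(-\log r)\to 0$ and the lower recurrence/dimension bounds degenerate. The resolution is that for the \emph{infinite} values we are after, only the \emph{upper} bounds $\tau_r(\varphi x;\widetilde T)\le\tau_{\omega(r)}(x;T)$ and $\varphi_*\mu(B(\varphi x,r))\le\mu(B(x,\omega(r)))$ are needed to force $+\infty$ on the $X$-side from $+\infty$ on the $Y$-side — and for the reverse implication (which we only need to identify the set as a preimage, and could even bypass by noting that $\varphi_*^{-1}$ of a dense $G_\delta$ is dense $G_\delta$ regardless of whether the sets coincide, as long as one inclusion holds) the local Hölder bound suffices. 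So the argument should be arranged to use each hypothesis on the side where it is the ``expanding'' one, and I would flag explicitly that full equality of the dimensions is not required — only that $\varphi_*$ maps the prescribed set onto (or into, with the complement mapping into the complement) the full-shift set of known Baire type.
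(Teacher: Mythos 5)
Your overall strategy coincides with the paper's: build the measure correspondence induced by $\varphi$, prove the one-sided Hölder estimates (your $\overline{d}_\mu(x)\ge\alpha\,\overline{d}_{\varphi_*\mu}(\varphi(x))$ is exactly the paper's $\dim_P^-(\psi(\nu_Y))\ge\alpha\dim_P^-(\nu_Y)$ from Proposition~\ref{packingconj}), and transport Theorem~\ref{teocentral1} from the full shift to $(X,T)$. The genuine gap is the $G_\delta$ claim in items 1 and 2. The one-sided inequality only yields the inclusion $\psi(PD_Y(\tilde{T}))\subset PD_X(T)$ (and likewise for the recurrence set), so carrying a dense $G_\delta$ through the homeomorphism shows only that $PD_X(T)$ \emph{contains} a dense $G_\delta$, i.e.\ is residual; a superset of a dense $G_\delta$ need not be $G_\delta$, so your proposed ``bypass'' (``one inclusion suffices'') is false as stated. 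The alternative you sketch --- upgrading to set equality via the modulus $\omega$ of $\varphi^{-1}$ --- fails for exactly the reason you yourself flag: uniform continuity permits $\log\omega(r)/\log r\to 0$ (e.g.\ $\omega(r)\sim 1/\log(1/r)$), and the ``compact-by-compact'' remark does not repair this; the theorem does not assert, and the paper never proves, that the two sets correspond under $\psi$. The paper closes this hole intrinsically: for each $L$, the sets $\{\mu_X\in\mathcal{M}_X(T)\mid\dim_P^-(\mu_X)\ge\alpha L\}$ and $\{\mu_X\mid\mu_X\textrm{-}\essinf\overline{R}(x;T)\ge\alpha L\}$ are $G_\delta$ by Propositions~2.1 and~3.1 of~\cite{AS}, and only the \emph{density} is transported through $\psi$. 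Without quoting or reproving those facts, your argument proves residuality, not dense $G_\delta$, for items 1 and 2.

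There is also a directional error in your recurrence comparison. From $\overline{B}(x,cr^{1/\alpha})\subset\varphi^{-1}(\overline{B}(\varphi(x),r))\subset\overline{B}(x,\omega(r))$, hitting a larger target is easier, so the correct sandwich is $\tau_{\omega(r)}(x;T)\le\tau_r(\varphi(x);\tilde{T})\le\tau_{cr^{1/\alpha}}(x;T)$ --- the reverse of what you wrote --- and it is the Hölder-side bounds, namely $\tau_r(\varphi(x);\tilde{T})\le\tau_{cr^{1/\alpha}}(x;T)$ (giving $\alpha\overline{R}(\varphi(x);\tilde{T})\le\overline{R}(x;T)$, Proposition~\ref{recurrenceconj}) and $\mu(B(x,\rho))\le\varphi_*\mu(B(\varphi(x),C\rho^\alpha))$, that force $+\infty$ on the $X$ side from $+\infty$ on the $Y$ side; the $\omega$-side bounds you single out in your closing paragraph point the other way and are precisely the ones that can degenerate, so an argument organized around them would not transfer anything. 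Finally, note that the paper obtains item 3 from item 1 together with the second inequality in~\eqref{Gala}, which avoids transporting the two-point quantity $\tau_r(x,y)$ altogether; your direct transport would be acceptable for the (merely residual) item 3 once the inequality directions are corrected.
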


\begin{rek} One may replace the hypothesis that the conjugation $\varphi: X\rightarrow Y$ is (locally) $\alpha$-Hölder continuous with uniformly continuous inverse in  Theorems~\ref{correlationconj} and~\ref{packingconjC} by the hypothesis that $\varphi$ is quasi-$\alpha$-Hölder continuous. A bijective transformation $\xi:X\rightarrow Y$ between two (Polish) metric spaces is said to be quasi-$\alpha$-Hölder continuous if for each $\varepsilon\in(0,\min\{1,\alpha\})$, there exists $\delta>0$ such that if $d_X(x,x^\prime)<\delta$, then
  \begin{equation}\label{quasiHolder}
    \left\vert\dfrac{d_Y(\xi(x),\xi(x^\prime))}{d_X(x,x^\prime)}-\alpha\right\vert<\varepsilon.
  \end{equation}
  $\xi$ is called locally quasi-$\alpha$-Hölder continuous if for each $x\in X$ and each $\varepsilon\in(0,1)$, there exists $\delta>0$ such that for each $x^\prime\in B(x,\delta)$,~\eqref{quasiHolder} follows.
  
  This definition generalizes the notion of quasi-Lipshitz transformation presented in~\cite{Yang2017}, which corresponds to the case $\alpha=1$. It is straightforward to show that if $\xi$ is quasi-$\alpha$-Hölder continuous, then $\xi^{-1}$ is quasi-$(1/\alpha)$-Hölder continuous, and so uniformly continuous.

  It is also possible to show that if $\xi:X\rightarrow Y$ is quasi-$\alpha$-Hölder continuous, then there exists a monotone non-decreasing function $\zeta:(0,+
  \infty)\rightarrow(0,+\infty)$ satisfying $\lim_{r\downarrow 0}\zeta(r)=0$ so that for each $x\in X$ and each suficiently small $r$, one has $\xi(B(x,r))\subset B(\xi(x),r^{\alpha-\zeta(r)})$ (see Lemma~2.4 in~\cite{Yang2017}).

  Therefore, the results stated in Section~\ref{section3} are valid (after minor adaptations in their proofs) if we replace the hypothesis that the conjugation $\varphi: X\rightarrow Y$ is locally $\alpha$-Hölder continuous with uniformly continuous inverse by the hypothesis that $\varphi$ is quasi-$\alpha$-Hölder continuous. If one wants to assume that $\varphi$ is only locally quasi-$\alpha$-Hölder continuous, then one also has to assume that $\varphi^{-1}$ is uniformly continuous. 
  \end{rek}

\begin{rek} Although it is possible to present (by following the results stated in Remarks~\ref{rgen},~\ref{Rpackingconj} and~\ref{rrec}) sufficient conditions for a conjugation to preserve generic sets of invariant measures with zero lower $s$-generalized fractal dimension ($0<s<1$), zero Haudorff dimension and zero lower recurrence rate and quantitative waiting time indicator, these results are not as important as those discussed in Theorems~\ref{correlationconj} and~\ref{packingconjC}, given that there are fairly general situations for which such sets are generic; roughly, the topological dynamical system $(X,T)$ must be such that $\overline{\M_p(T)}=\M(T)$, where $\M_p(T)$ stands for the set of the $T$-periodic measures (see~\cite{AS2,AS} for the proof of this statement).  
\end{rek}

The paper is organized as follows. In Section~\ref{section2}, we present several results used in the proof of Theorem~\ref{teocentral1}. They are adapted versions, for this setting, of some of the results presented in Sections~2 and~3 in \cite{AS}. Section~\ref{section3} is devoted to the proof of Theorem~\ref{teocentral2} (here, we also adapt two results presented in Section~3 in~\cite{AS2} for this setting). Finally, in Section~\ref{section3} we present the proofs of Theorems~\ref{correlationconj} and~\ref{packingconjC}. 




\section{$PD$, $\overline{\mathcal{R}}$ and $\overline{\mathscr{R}}$ are residual sets}
\label{section2}


We begin this section proving that the set $PD$ defined in the statement of Theorem~\ref{teocentral1} is dense in $\M(T)$. One needs two ingredients in this proof. The first one is the fact that for each $K>0$, the set of ergodic measures whose respective metric entropies are greater than $K$ is dense (this is Proposition~\ref{central1}). The second one is the fact that if $T$ is a Lipschitz continuous function, then the lower packing dimension of an ergodic measure is greater than its metric entropy times a constant (this is Lemma~\ref{dimpos01}).






The next result is due to Parthasarathy (Theorem~3.3 in~\cite{Parthasarathy1961}; see also Theorem~2 in~\cite{Oxtoby1963}). 
Let $\overline{\mathcal{M}}(T)$ denote the space of $T$-invariant measures on $\overline{X}=\prod_{-\infty}^{+\infty}\overline{M}$, endowed with the weak topology, where $\overline{M}$ is a compactification of $M$. 

\begin{lema}
\label{denseperiodic}
Let $\mu\in\overline{\M}(T)$ be such that $\mu(X)=1$ and let $\ve>0$. Then, there exists a (normalized) $T$-periodic measure $\nu\in \overline{\M}(T)$ such that $\nu\in B(\mu;\ve)$. 
\end{lema}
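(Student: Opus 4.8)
The goal is to approximate, in the weak topology, an arbitrary $T$-invariant measure $\mu$ on $\overline{X}=\prod_{-\infty}^{+\infty}\overline{M}$ that is supported on $X$ (i.e. $\mu(X)=1$) by a normalized $T$-periodic measure $\nu\in\overline{\M}(T)$. The plan is to reduce everything to the compact setting, where the classical Parthasarathy/Oxtoby argument applies, and then push the approximant back. Since $\overline{M}$ is compact metrizable, $\overline{X}$ is compact metrizable, and $\overline{\M}(T)$ is compact and metrizable in the weak topology; fix a metric on $\overline{\M}(T)$ compatible with it. Recall that weak convergence on $\overline{X}$ is tested against finitely many bounded continuous functions, and since $\overline{X}$ is a product over $\mathbb{Z}$, such functions can be uniformly approximated by cylinder functions depending on finitely many coordinates; so it suffices to approximate $\mu$ well on a finite-dimensional marginal and on finitely many continuous test functions.

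The key steps, in order, are as follows. \emph{Step 1.} Reduce to cylinder functions: given $\ve>0$, choose finitely many bounded continuous $f_1,\dots,f_m$ on $\overline{X}$ (together with a uniform $\ve'$) whose approximate matching guarantees membership in $B(\mu;\ve)$; replace each $f_j$ by a cylinder function depending only on coordinates in a window $[-N,N]$, at the cost of a controlled error. \emph{Step 2.} Produce a periodic orbit whose empirical measure matches $\mu$ on the window: use ergodicity-type decomposition or the pointwise ergodic theorem — a $\mu$-generic point $x$ has the property that the Birkhoff averages $\frac1n\sum_{k=0}^{n-1}\delta_{T^k x}$ converge weakly to an ergodic component; averaging and an approximation of $\mu$ by a finite convex combination of ergodic measures lets us pick a long finite orbit segment $x_0,x_1,\dots,x_{n-1}$ of $x$ under $T$ whose empirical distribution is $\ve/2$-close to $\mu$ in the (finitely many, cylinder) test functions. \emph{Step 3.} Close up the orbit into a genuine $T$-periodic point of $\overline{X}$ of period $n$: this is where working in $\overline{X}$ rather than $X$ matters — in the full shift one periodizes a finite block by repeating it bi-infinitely, and the resulting periodic measure $\nu=\frac1n\sum_{k=0}^{n-1}\delta_{T^k\hat x}$ (with $\hat x$ the periodic extension) differs from the empirical measure only through boundary/wrap-around terms over a window of fixed size $2N+1$, hence by $O(N/n)\to 0$. \emph{Step 4.} Combine the estimates of Steps 1–3 to conclude $\nu\in B(\mu;\ve)$; note $\nu$ is automatically a normalized element of $\overline{\M}(T)$, and it need not be supported on $X$, which is fine since the statement only requires $\nu\in\overline{\M}(T)$.

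The main obstacle is \emph{Step 2–3}, i.e. getting a finite orbit segment that both (a) has empirical distribution close to $\mu$ on the chosen finite window and (b) can be closed into a period-$n$ orbit with negligible boundary correction. The cleanest route is: first invoke the ergodic decomposition to reduce to $\mu$ ergodic (a finite convex combination of periodic measures approximates a finite convex combination of ergodic ones, and periodic measures are closed under convex combination with rational weights up to taking a common period); then for $\mu$ ergodic, apply Birkhoff's theorem to get a generic point $x$ and a long segment, and periodize. One must be careful that the test functions in the weak topology on $\overline{X}$ — not just cylinder functions — are handled; this is dealt with in Step 1 by the density of cylinder functions in $C(\overline{X})$ (Stone–Weierstrass on the compact space $\overline{X}$). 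A secondary technical point is that $\mu$ lives on $X$, so its ergodic components also give full measure to $X$ and the generic point $x$ can be chosen in $X$; its finite windows then take values in $M$, not just $\overline{M}$, though this is not needed for the conclusion. Everything else (Steps 1 and 4) is routine $\ve$-bookkeeping, so I would not belabor it.
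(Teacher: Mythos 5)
The paper itself does not prove this lemma; it quotes it from Parthasarathy (Theorem~3.3 in the cited paper) and Oxtoby (Theorem~2), whose argument is in spirit the one you outline: reduce to finitely many cylinder test functions on the compact space $\overline{X}$, take long orbit segments, and close them up into periodic points of the full shift, with a boundary error of order $N/n$. Your Steps 1, 3 and 4 are correct, and your side remark is also the right one for how the lemma is actually used later in the paper: since the block is taken from a point of $X$, the periodic point has all entries in $M$, so the approximating periodic measure satisfies $\nu(X)=1$.

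The genuine gap is in Step 2, i.e.\ in the non-ergodic case. A long orbit segment of a single $\mu$-generic point has empirical distribution close to \emph{one ergodic component} of $\mu$, not to $\mu$, so you rightly pass to a finite convex combination $\sum_i\lambda_i\mu_i$ of ergodic measures; but your justification that ``periodic measures are closed under convex combination with rational weights up to taking a common period'' is false: a convex combination of uniform measures on two distinct periodic orbits is supported on the union of those orbits and is never of the form $\frac{1}{\tau}\sum_{k=0}^{\tau-1}\delta_{T^k\hat x}$, no matter how the periods are adjusted. The missing idea, which is exactly how the classical proofs you are reproducing handle this point (and the only place besides Step 3 where the full-shift/specification structure is used), is \emph{concatenation}: for each $i$ choose a long block from a $\mu_i$-generic point, with length proportional to $\lambda_i$, concatenate these finitely many blocks into a single block, and periodize that one block; the resulting single periodic orbit measure approximates $\sum_i\lambda_i\mu_i$ on the chosen cylinder functions up to the same $O(N/n)$ wrap-around errors, now incurred at finitely many junctions. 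With that replacement (or simply by invoking the cited theorem of Parthasarathy/Oxtoby, as the paper does), your argument is complete.
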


The next result is an extension of Theorem~2 in~\cite{Sigmund1971} and Proposition~2.4 in~\cite{AS} to the space $X=\prod_{-\infty}^{+\infty} M$, where $M$ is any infinite (countable or uncountable) Polish metric space  (Theorem~2 in~\cite{Sigmund1971} was proved for $M=\mathbb{R}$ and Proposition~2.4 in~\cite{AS} was proved in case $M$ is a perfect Polish metric space). 

\begin{propo}
\label{central1}
Let 
$K>0$. Then, $\{\mu\in \M_e\mid h_\mu(T)> K\}$ is a dense subset of $\M(T)$ (where $\M_e$ denotes the set of $T$-ergodic measures).  
\end{propo}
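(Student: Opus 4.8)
### Proof proposal for Proposition~\ref{central1}

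The plan is to reduce the statement to the known compact-alphabet case by a two-step approximation, using the compactification $\overline{M}$ introduced before Lemma~\ref{denseperiodic}. First, given $\mu\in\M(T)$ and $\ve>0$, I would view $\mu$ as an element of $\overline{\M}(T)$ supported on $X\subset\overline{X}$. Since $M$ is an infinite Polish metric space, $\overline{M}$ is an infinite compact metric space and hence perfect after possibly passing to a sub-alphabet (more carefully: $\overline{M}$ contains a perfect subset, or one simply invokes the compact-alphabet version of the result directly on $\overline{M}$). Thus by the already-established result for perfect compact alphabets (Proposition~2.4 in~\cite{AS}, or its analogue), there is an ergodic $\overline{\nu}\in\overline{\M}(T)$ with $h_{\overline{\nu}}(\widetilde{T})>K$ and $\overline{\nu}\in B(\mu;\ve/2)$ in the weak topology on $\overline{\M}(T)$. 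The obstacle is that $\overline{\nu}$ need not be supported on $X$; its marginals may charge the ``boundary'' $\overline{M}\setminus M$.

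The second step is to push $\overline{\nu}$ back into $\M(T)$ without losing much entropy. Here I would use a cut-off / coding argument: choose a large finite set $F\subset M$ of isolated-in-$\overline{M}$ points (or, if $M$ has no isolated points, a finite partition of $M$ into small Borel pieces each containing a point of $M$) such that the $F$-cylinders carry almost all of $\overline{\nu}$'s mass, and define a factor map $\pi:\overline{X}\to X$ that collapses symbols outside $F$ onto a fixed reference symbol $m_0\in M$, leaving $F$-symbols untouched. Then $\nu':=\pi_*\overline{\nu}$ is $T$-invariant, supported on $X$, and close to $\overline{\nu}$ in the weak topology (since $\pi$ is the identity on a set of large $\overline{\nu}$-measure, continuity of test functions gives $\nu'\in B(\overline{\nu};\ve/4)$ for $F$ large). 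The entropy may drop under the factor map, so instead I would produce $\nu'$ as an ergodic component of a measure built directly on the alphabet $F\cup\{m_0\}\subset M$: pick a Bernoulli-type (or Markov) measure on a finite subset of $M$ of cardinality $\ge \lceil e^{K+1}\rceil$, which has entropy $>K$ automatically, and arrange it to be close to $\mu$. In fact this makes the compactification unnecessary for the density part — one only needs: (i) finitely supported Bernoulli measures on $M$ with arbitrarily large entropy exist because $M$ is infinite; (ii) such measures, via Lemma~\ref{denseperiodic}-type periodic approximation combined with the denseness of periodic measures, approximate any $\mu\in\M(T)$.

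Concretely, then, the cleanest route I would actually write is: (1) By Lemma~\ref{denseperiodic} (applied with $\overline{M}=M$ when convenient, or the standard statement), $T$-periodic measures are dense in $\M(T)$; fix a periodic $\sigma\in B(\mu;\ve/2)$ carried by a single periodic orbit of some word $w$ over a finite alphabet $A_0\subset M$. (2) Since $M$ is infinite, choose a finite set $A_1\subset M$ disjoint from $A_0$ with $\card(A_1)$ so large that the uniform Bernoulli measure $\beta$ on $A_1$ has $h_\beta(T)=\log\card(A_1)>K$. (3) For $t\in(0,1)$ small, form the mixture-type construction: take the $(1-t,t)$-weighted combination not of the measures (which would not be ergodic) but a skew/coded construction producing an \emph{ergodic} measure $\nu_t$ whose entropy is at least $(1-t)\,h_\sigma + (\text{contribution from }\beta)$ and which converges weakly to $\sigma$ as $t\downarrow 0$; e.g., $\nu_t$ = the ergodic measure of a Bernoulli scheme on $A_0\cup A_1$ that puts mass $1-t$ spread over the symbols of $w$ in the right proportions and mass $t$ spread uniformly over $A_1$. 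One checks $h_{\nu_t}(T)\ge -t\log t - t\log\card(A_1)\to\infty$ is the wrong direction — rather one wants $\card(A_1)$ fixed huge and $t$ fixed: $h_{\nu_t}(T)\ge t\log\card(A_1) - H(t)$, so choosing $\card(A_1)$ large enough for a fixed small $t$ gives $h_{\nu_t}(T)>K$ while weak-closeness to $\sigma$ (hence to $\mu$) holds because $t$ is small. (4) Conclude $\nu_t\in\M_e$, $h_{\nu_t}(T)>K$, $\nu_t\in B(\mu;\ve)$.

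The main obstacle is reconciling the two requirements simultaneously: being weak-close to a prescribed $\mu$ forces the measure to concentrate, which tends to lower entropy, while having entropy $>K$ forces spreading over many symbols. The resolution — and the technical heart of the argument — is that ``spreading a small mass $t$ over a large finite alphabet $A_1\subset M$'' costs only $O(t\log\card(A_1))+O(H(t))$ in the weak metric (in fact the weak-distance contribution is $O(t)$, independent of $\card(A_1)$, once the symbols of $A_1$ are taken far out in $M$ or simply because only finitely many coordinates and bounded test functions matter) but \emph{gains} $\approx t\log\card(A_1)$ in entropy, which is unbounded in $\card(A_1)$. Making this quantitative — bounding $d_{\mathrm{weak}}(\nu_t,\sigma)$ by a modulus that is uniform in the size of the auxiliary alphabet, and verifying ergodicity of the coded/Bernoulli construction $\nu_t$ — is where the real work lies; the rest is the routine entropy estimate $h_{\nu_t}(T)\ge t\log\card(A_1)-H(t)$ for a Bernoulli measure.
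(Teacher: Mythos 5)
Your overall skeleton -- approximate $\mu$ by a periodic measure, then inject a small amount of mass spread over a large finite set of distinct symbols of $M$ (using only that $M$ is infinite) to push the entropy above $K$ at negligible cost in the weak topology -- is exactly the paper's strategy. But there is a genuine gap at the step where you manufacture ergodicity. Your concrete construction in step (3), ``the ergodic measure of a Bernoulli scheme on $A_0\cup A_1$ that puts mass $1-t$ spread over the symbols of $w$ in the right proportions and mass $t$ uniformly over $A_1$'', is \emph{not} weak-close to the periodic orbit measure $\sigma$, no matter how small $t$ is. Weak closeness in $\mathcal{M}(T)$ is controlled by cylinder (multi-coordinate) statistics, not by one-symbol frequencies: if, say, $w=ab\,ab\,\ldots$, then $\sigma$ gives the cylinder $[aa]$ measure $0$, while any i.i.d.\ measure whose marginal matches the symbol frequencies of $w$ gives it measure $1/4$. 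As $t\downarrow 0$ your $\nu_t$ converges to the Bernoulli measure with the empirical one-symbol distribution of $w$, not to $\sigma$. A Markov chain that follows the cyclic order of $w$ with rare $t$-jumps could repair this, but then you face the issues you yourself flag (repeated symbols in $w$, ergodicity, and the entropy lower bound), and making that precise is essentially a Sigmund-type coding argument that your proposal does not carry out; it is the heart of the matter, not a routine verification.

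The paper resolves this differently, and more cheaply: it keeps the non-ergodic convex combination $\nu=(1-\delta)\mu+\delta\rho$, where $\rho$ is the uniform Bernoulli measure on $s=[e^{6K/\ve}]+1$ distinct points of $M$ (this is automatically $\ve/2$-close to $\mu$ and, by affinity of entropy, satisfies $h_\nu(T)=\delta\log s>2K$), and then invokes the \emph{entropy density} of ergodic measures for the full shift over the compact alphabet $\overline{M}$ (a consequence of specification, as in Eizenberg--Kifer--Weiss and Pfister--Sullivan): there is an ergodic $\xi$ with $\vert h_\nu(T)-h_\xi(T)\vert<K$ arbitrarily weak-close to $\nu$, and -- the point your compactification sketch in the first paragraph leaves open -- $\xi$ can be chosen with $\xi(X)=1$ because $\nu$ is supported on $X$; one then passes back from $\overline{\mathcal{M}}(T)$ to $\mathcal{M}(T)$ by the standard Oxtoby argument. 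So the missing ingredient in your write-up is precisely this entropy-density (or an equivalent explicit coded construction) that simultaneously delivers ergodicity, near-preservation of entropy, weak proximity, and full mass on $X$; without it, neither your Bernoulli shortcut nor your factor-map/compactification sketch closes the argument.
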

\begin{proof}
  Since, by Lemma~\ref{denseperiodic}, $\mathcal{N}:=\{\mu\in\overline{\M}_p(T)\mid \mu(X)=1\}$ is dense in  $\overline{\M}_X:=\{\mu\in\overline{\M}(T)\mid \mu(X)=1\}$, one just has to show that for each $\mu\in\mathcal{N}$ and each $\ve>0$, there exists a $\xi\in B(\mu;\ve)\cap\overline{\M}_e$ 
  such that $h_\xi(T)> K$ and $\xi(X)=1$.  The rest of the proof follows the same argument presented in the proof of Theorem~2 in~\cite{Sigmund1971}.

Set $s:=[e^{6K/\ve}]+1$ and note that $\ve\log s>6K$. Let $\rho$ be the Bernoulli shift whose states are $y_1,\ldots,y_s\in M$ and whose probabilities are given by the $s$-tuple $(1/s,\cdots,1/s)$ (since $M$ is infinite, given any $s\in\N$, there exist $y_1,\ldots,y_s\in M$ such that for each $1\le i\neq j\le s$, $y_i\neq y_j$), and let $\nu=(1-\delta)\mu+\delta\rho$, where $\delta=\ve/3$; clearly, $\nu(X)=1$ and $\nu\in B(\mu;\ve/2)$. 

We affirm that $h_\nu(T)>2K$. Namely, since $h_\nu(T)=(1-\delta)h_\mu(T)+\delta h_\rho(T)$ (see Theorem~8.1 in~\cite{Walters}), it follows that $h_\nu(T)=\delta h_\rho(T)=\delta\log s>2K$.

The last ingredient used in the proof is the fact that $\overline{\M}_e$ is entropy dense (given that $(X,T)$ satisfies the specification property; see~\cite{eizenberg1994,Kwietniak,Pfister2004,Sigmund1974}), that is, for each $\nu\in\overline{\M}(T)$, each $\eta>0$ and each $\lambda>0$, there exists $\xi\in B(\nu;\eta)\cap\overline{\M}_e$ such that $\vert h_\nu(T)-h_\xi(T)\vert<\lambda$. Thus, there exists $\xi\in B(\nu;\delta)\cap\overline{\M}_e$ such that $\vert h_\nu(T)-h_\xi(T)\vert<K$. Moreover, it follows from the proof of Theorem~B in~\cite{eizenberg1994} (see also Proposition~2.3 in~\cite{Pfister2004}) that one can choose $\xi$ such that $\xi(X)=1$, given that $\nu$ is supported on $X$.

Combining this with the previous result, it follows that $\xi\in B(\mu;\ve)\cap\overline{\M}_e$ is such that $\xi(X)=1$ and $h_\xi(T)>K$.

Now, since $\overline{\M}_{h,X}:=\{\zeta\in\overline{\M}_e\mid \zeta(X)=1 \mbox{ and } h_{\zeta}(T)> K\}$ is dense in $\overline{\M}_{X}$, there exists a sequence $(\xi_n)$, $\xi_n\in\overline{\M}_{h,X}$, converging weakly to $\xi$, with $\xi(X)=1$ and $h_{\xi}(T)> K$. Let, for each $n\in\N$, $\xi'_{n}\in \M(T)$ be the measure induced by $\xi_n \in \overline{\M}_X$. A standard argument shows that $\xi'_n\to \mu'$ in $\M(T)$ (see Proposition 6.1 in \cite{Oxtoby1963}).
\end{proof}

\begin{rek} The main difference between the proofs of Proposition~\ref{central1} and Proposition~2.4 in~\cite{AS} is the fact that in Proposition~\ref{central1}, we use the fact that $\overline{\M}_e$ is entropy dense (which is true whether $M$ is countable or not), where in Proposition~2.4 in~\cite{AS} one uses Theorem~2 in~\cite{Sigmund1971}, a result which is valid (after some minor adaptations) in case $M$ is a perfect Polish metric space. 
\end{rek}

The next result is an extension of Lemma~2.3 in~\cite{AS} in case $X$ is any Polish space.

\begin{lema}
\label{dimpos01}
Let $(X,T)$ be such that $X$ is a Polish metric space, $T$ is a Lipshitz function with constant $\Lambda>1$, and let $\mu\in\M_e$ (assume that $\M_e\neq\emptyset$). Then, $\dim_P^-(\mu)\geq \frac{h_{\mu}(T)}{\log \Lambda}$. 
\end{lema}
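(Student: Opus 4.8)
\textbf{Proof proposal for Lemma~\ref{dimpos01}.}

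The plan is to bound the upper local dimension $\overline{d}_\mu(x)$ from below at $\mu$-a.e.\ point $x$ and then invoke Proposition~\ref{BGT}, which identifies $\dim_P^-(\mu)$ with the $\mu$-essential infimum of $\overline{d}_\mu$. Concretely, I would show that for $\mu$-a.e.\ $x$,
\[
\overline{d}_\mu(x)=\limsup_{\ve\downarrow 0}\frac{\log\mu(B(x,\ve))}{\log\ve}\ \ge\ \frac{h_\mu(T)}{\log\Lambda},
\]
from which $\dim_P^-(\mu)=\mu\textrm{-}\essinf\,\overline{d}_\mu(x)\ge h_\mu(T)/\log\Lambda$ follows at once. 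Since the inequality is trivial when $h_\mu(T)=0$, assume $h_\mu(T)>0$.

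The key dynamical input is the Brin--Katok local entropy formula: for $\mu$-a.e.\ $x$,
\[
h_\mu(T)=\lim_{\ve\downarrow 0}\limsup_{n\to\infty}-\frac1n\log\mu\big(B_n(x,\ve)\big),
\]
where $B_n(x,\ve)=\{y\mid d(T^jx,T^jy)<\ve,\ 0\le j\le n-1\}$ is the dynamical (Bowen) ball. The next step uses the Lipschitz hypothesis: if $d(x,y)<\ve\Lambda^{-(n-1)}$, then for each $0\le j\le n-1$ we get $d(T^jx,T^jy)\le\Lambda^j d(x,y)<\ve$, hence
\[
B\big(x,\ve\Lambda^{-(n-1)}\big)\subset B_n(x,\ve),
\qquad\text{so}\qquad
\mu\big(B(x,\ve\Lambda^{-(n-1)})\big)\le\mu\big(B_n(x,\ve)\big).
\]
Now fix $\ve>0$ small and set $r_n=\ve\Lambda^{-(n-1)}$, so $r_n\downarrow 0$ and $\log r_n=\log\ve-(n-1)\log\Lambda$. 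Along the sequence $r_n$, using the displayed inclusion,
\[
\frac{\log\mu(B(x,r_n))}{\log r_n}
\ \ge\ \frac{\log\mu(B_n(x,\ve))}{\log\ve-(n-1)\log\Lambda}
\ =\ \frac{-\frac1n\log\mu(B_n(x,\ve))}{\frac{(n-1)\log\Lambda-\log\ve}{n}},
\]
where I have divided numerator and denominator by $-n$ and used that the denominator $\log r_n$ is negative for large $n$ (so the inequality direction is preserved when one rewrites the negative quantities). Taking $\limsup_{n\to\infty}$, the denominator tends to $\log\Lambda$, and taking then $\liminf$ over $\ve\downarrow0$ of the $\limsup$ in $n$ of the numerator gives a lower bound of $h_\mu(T)$ by Brin--Katok (applied along the subsequence $\ve\downarrow0$; one uses that $\limsup_{\ve\downarrow0}$ dominates $\liminf_{\ve\downarrow0}$ here, and the Brin--Katok limit exists). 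Hence $\overline{d}_\mu(x)\ge h_\mu(T)/\log\Lambda$ for $\mu$-a.e.\ $x$. Note also that $\mu(B(x,\ve))>0$ for all $\ve>0$ at a.e.\ $x$ (indeed $\mu(B_n(x,\ve))>0$ for a.e.\ $x$ by the Brin--Katok formula, forcing $\mu(B(x,r_n))>0$), so the local dimension is genuinely finite-valued there and the computation is legitimate.

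The main obstacle I anticipate is a careful bookkeeping of the $\limsup$/$\liminf$ interchange: $\overline{d}_\mu(x)$ is a $\limsup$ over all $\ve\downarrow0$, whereas the estimate above only controls the ratio along the geometric subsequence $r_n=\ve\Lambda^{-(n-1)}$ for each fixed $\ve$. One must argue that sweeping $\ve$ through a sequence decreasing to $0$ produces arbitrarily small radii $r_n$ while the bound $-\frac1n\log\mu(B_n(x,\ve))$ approaches (a quantity $\ge$) $h_\mu(T)$; since the sets $\{r_n(\ve):n\in\N\}$ for varying $\ve$ are cofinal in $(0,\delta)$, the overall $\limsup$ over $\ve\downarrow0$ indeed dominates $h_\mu(T)/\log\Lambda$. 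A second, minor point is the nonatomicity/positivity needed to make $\log\mu(B(x,\ve))$ well-defined; this is handled by the remark above that $\mu(B(x,\ve))>0$ a.e. Everything else is the direct combination of Brin--Katok with the Lipschitz contraction estimate and Proposition~\ref{BGT}.
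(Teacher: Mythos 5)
Your argument is essentially the paper's proof: the Lipschitz inclusion $B(x,\ve\Lambda^{-n})\subset B_n(x,\ve)$, the resulting lower bound of $\overline{d}_\mu(x)$ by the local entropy divided by $\log\Lambda$, and the identification $\dim_P^-(\mu)=\mu\textrm{-}\essinf\,\overline{d}_\mu$ from Proposition~\ref{BGT} all coincide with what is done there. The one point to be careful about is your appeal to the Brin--Katok formula: as classically stated (and proved) it requires $X$ compact and $T$ continuous, whereas here $X$ is only Polish and, in the intended application (full shift over a countable alphabet), is not compact. The paper avoids this by bounding $\overline{d}_\mu$ below by the \emph{lower} local entropy $\underline{h}_\mu^{loc}(T)$ and then invoking Corollary~2.1 of~\cite{AS4}, which supplies the inequality $\underline{h}_\mu^{loc}(T)\ge h_\mu(T)$ for ergodic measures in the Polish setting; to make your version airtight you should likewise cite (or prove) a non-compact version of the local entropy lower bound rather than the compact-space Brin--Katok theorem, after which your computation goes through verbatim.
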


\begin{proof}
  The result is a consequence of the estimates presented in the proof of Lemma~2.3 in~\cite{AS} and Corollary~2.1 in~\cite{AS4}.
  
  Fix $x\in X$,  $n\geq1$ and $\ve>0$. Given $y\in B(x,\ve \Lambda^{-n})$, one has, for each $0\le i\le n$, $\rho(T^iy,T^ix)\le\Lambda^i\rho(x,y)\le\Lambda^{i-n}\ve<\ve$, which shows that $y\in B(x,n,\ve)=\{z\in X\mid\rho(T^iz,T^ix)<\ve,\;\;\forall\; 0\le i\le n-1\}$. Hence, for each $x\in X$ and each $\ve>0$,
\begin{eqnarray*}
\overline{d}_{\mu}(x)  \ge   \limsup_{n\to \infty} \frac{\log\mu(B(x,\ve \Lambda^{-n}))}{\log \ve \Lambda^{-n}}  
&\geq & \limsup_{n\to \infty} \frac{\log\mu(B(x,n,\ve))}{-n} ~\frac{1}{\frac{-\log\ve}{n}+\log \Lambda}\\ 
&\geq & \limsup_{n\to \infty} \frac{\log\mu(B(x,n,\ve))}{-n} \frac{1}{\log\Lambda};
\end{eqnarray*}
hence, it follows from Proposition~\ref{BGT} that
\begin{eqnarray*}
\label{entropylocal}
\dim_P^-(\mu)\ge\mu\textrm{-}\essinf\overline{d}_{\mu}(x) 
&\geq & \mu\textrm{-}\essinf\left\{\lim_{\ve\to 0} \liminf_{n\to \infty} \frac{\log\mu(B(x,n,\ve))}{-n}\right\}\frac{1}{\log\Lambda}\\
&=& \underline{h}_{\mu}^{loc}(T)\frac{1}{\log\Lambda}.
\end{eqnarray*}

The result follows now from Corollary~2.1 in~\cite{AS4}.
  \end{proof}

\begin{propo}[Proposition~2.4 in~\cite{AS}]
\label{densepack}
Let $(X,T,\mathcal{B})$ be as in the statement of Theorem~\ref{teocentral1} and let $L>0$. Then, $\{\mu\in \M_e\mid\dim_P^-(\mu)> L\}$ is a dense subset of $\mathcal{M}_e$.
\end{propo}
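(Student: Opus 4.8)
The plan is to deduce the statement from the two ingredients already assembled in this section: the density of high-entropy ergodic measures established in Proposition~\ref{central1}, and the lower bound $\dim_P^-(\mu)\ge h_\mu(T)/\log\Lambda$ valid for ergodic $\mu$ when $T$ is $\Lambda$-Lipschitz, established in Lemma~\ref{dimpos01}.

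First I would normalize the Lipschitz constant. Since $T$ is Lipschitz by hypothesis, and since one is always free to enlarge a Lipschitz constant, I may assume $T$ is Lipschitz with some constant $\Lambda>1$; for instance, with the metric~\eqref{metric1} a direct reindexing gives $d(Tx,Ty)\le 2\,d(x,y)$, so $\Lambda=2$ works. Given $L>0$, set $K:=L\log\Lambda>0$.

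Next, by Proposition~\ref{central1} the set $\mathcal{E}:=\{\mu\in\M_e\mid h_\mu(T)>K\}$ is dense in $\M(T)$; as $\mathcal{E}\subset\M_e\subset\M(T)$, its closure in $\M_e$ (with the subspace topology) is $\overline{\mathcal{E}}\cap\M_e=\M(T)\cap\M_e=\M_e$, so $\mathcal{E}$ is dense in $\M_e$ as well. Finally, for every $\mu\in\mathcal{E}$, Lemma~\ref{dimpos01} yields
\[
\dim_P^-(\mu)\ \ge\ \frac{h_\mu(T)}{\log\Lambda}\ >\ \frac{K}{\log\Lambda}\ =\ L,
\]
hence $\mathcal{E}\subset\{\mu\in\M_e\mid\dim_P^-(\mu)>L\}$, and the latter set, containing a dense set, is itself dense in $\M_e$.

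I do not expect a genuine obstacle at this stage: the substantive work lies upstream, in the entropy-density argument behind Proposition~\ref{central1} (which rests on Lemma~\ref{denseperiodic}, the Bernoulli-perturbation trick, the specification property and the entropy density of the ergodic measures, all of which survive the passage from a perfect to an arbitrary infinite Polish alphabet) and in the Brin--Katok-type local-entropy estimate behind Lemma~\ref{dimpos01} via Corollary~2.1 in~\cite{AS4}. The only point requiring a line of care in the present argument is the harmless normalization $\Lambda>1$, needed so that $K=L\log\Lambda$ is strictly positive and so that Lemma~\ref{dimpos01} can be applied to conclude $\dim_P^-(\mu)>L$.
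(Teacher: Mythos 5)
Your proof is correct and follows exactly the route the paper intends: combining the density of high-entropy ergodic measures (Proposition~\ref{central1}) with the bound $\dim_P^-(\mu)\ge h_\mu(T)/\log\Lambda$ of Lemma~\ref{dimpos01}, after harmlessly normalizing $\Lambda>1$ and noting that a subset of $\M_e$ dense in $\M(T)$ is dense in $\M_e$. No gaps.
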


Now we turn our attention to the upper recurrence rates. The next result is an extension of Proposition~3.3 in~\cite{AS} to the case where $M$ is a countable Polish metric space. Its proof combines the proof of Proposition~3.3 with Remark~3.1 in~\cite{AS}, with the difference that we replace Lemma~2.3 in~\cite{AS} by Lemma~2.2.

\begin{propo}
\label{denseretpos}
Let $(X,T,\mathcal{B})$ be as in the statement of Theorem~\ref{teocentral1} and let $L>0$. Then, $\{\mu\in \M_e\mid \mu\textrm{-}\essinf \overline{R}(x)\ge L\}$ is a dense subset of $\mathcal{M}_e$.
\end{propo}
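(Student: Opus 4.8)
The plan is to reduce the statement about the upper recurrence rate $\overline{R}(x)$ to the already-established density result for the lower packing dimension of ergodic measures (Proposition~\ref{densepack}), by exploiting the pointwise lower bound relating $\overline{R}(x)$ to the upper local dimension $\overline{d}_\mu(x)$. First I would recall, following the discussion referenced in~\cite{AS} (Barreira–Saussol, and the argument in Proposition~3.3 therein), that for an ergodic $T$-invariant measure $\mu$ one has the $\mu$-a.e.\ inequality $\overline{R}(x)\ge\overline{d}_\mu(x)$ (this is essentially the Galatolo-type estimate~\eqref{Gala} specialized to $y=x$ together with Poincaré recurrence; more precisely it comes from the fact that the return time $\tau_r(x)$ into $\overline{B}(x,r)$ satisfies $\tau_r(x)\ge \mu(\overline{B}(x,r))^{-1}$-type bounds after passing to a subsequence of scales). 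Consequently $\mu\textrm{-}\essinf\overline{R}(x)\ge\mu\textrm{-}\essinf\overline{d}_\mu(x)=\dim_P^-(\mu)$ by Proposition~\ref{BGT}.

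With that inequality in hand, the density argument is immediate from the analog already proved for packing dimension: given $\nu\in\M(T)$ and $\ve>0$, apply Proposition~\ref{densepack} to obtain $\mu\in\M_e$ with $d(\mu,\nu)<\ve$ and $\dim_P^-(\mu)>L$; then $\mu\textrm{-}\essinf\overline{R}(x)\ge\dim_P^-(\mu)>L\ge L$, so $\mu$ lies in the set in question, which is therefore dense in $\M(T)$ and hence in $\M_e$ (since $\M_e\subset\M(T)$ and the constructed measures are ergodic). Because the proposition as stated concerns density in $\mathcal{M}_e$, and the measures produced by Proposition~\ref{densepack} are themselves ergodic, no further reduction is needed; alternatively one can run the construction directly from Proposition~\ref{central1} (dense ergodic measures of large entropy) combined with Lemma~\ref{dimpos01}, which gives $\dim_P^-(\mu)\ge h_\mu(T)/\log\Lambda$, and then with the same $\overline{R}\ge\overline{d}_\mu$ step. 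The paper signals exactly this route ("combines the proof of Proposition~3.3 with Remark~3.1 in~\cite{AS}, with the difference that we replace Lemma~2.3 in~\cite{AS} by Lemma~2.2").

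The main obstacle — and the only genuinely non-formal point — is establishing the pointwise bound $\overline{R}(x)\ge\overline{d}_\mu(x)$ for $\mu$-a.e.\ $x$ in this generality (a countable, possibly non-compact Polish alphabet, with $T$ merely Lipschitz rather than expanding). One must be careful that the return time $\tau_r(x)$ is finite $\mu$-a.e.\ (Poincaré recurrence handles this on the support) and that the Borel–Cantelli / Kac-type counting estimate behind $\tau_r(x)\gtrsim\mu(\overline{B}(x,r))^{-1}$ does not require compactness; this is where Remark~3.1 and the replacement of the expanding-dimension lemma by Lemma~\ref{dimpos01} (valid for Lipschitz $T$ on any Polish space) enter. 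Once that inequality is secured, everything else is a one-line transfer of a density statement already proved, so I would present the proof as: (i) recall $\overline{R}(x)\ge\overline{d}_\mu(x)$ a.e.; (ii) take ess inf and invoke Proposition~\ref{BGT}; (iii) quote Proposition~\ref{densepack}.
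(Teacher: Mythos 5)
Your reduction stands or falls with step (i), the $\mu$-a.e.\ inequality $\overline{R}(x)\ge\overline{d}_\mu(x)$, and this is a genuine gap: neither of the two justifications you offer for it is valid. Galatolo's estimate~\eqref{Gala} is a \emph{waiting-time} statement: for each \emph{fixed} $y$ one has $\overline{R}(x,y)\ge\overline{d}_\mu(y)$ for $x$ in a full-measure set that depends on $y$, so it cannot be ``specialized to $y=x$'' to say anything on the diagonal. For genuine return times the general theorem in this setting goes in the \emph{opposite} direction: Barreira--Saussol~\cite{Barreiral2001} prove $\underline{R}(x)\le\underline{d}_\mu(x)$ and $\overline{R}(x)\le\overline{d}_\mu(x)$ $\mu$-a.e.\ for any Borel map of a separable metric space preserving $\mu$. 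Likewise, the pointwise bound $\tau_r(x)\gtrsim\mu(\overline{B}(x,r))^{-1}$ you invoke is false in general (Kac's lemma controls only the mean return time, and short returns do occur, e.g.\ near periodic behaviour); the Kac/Ornstein--Weiss-type estimates that do hold a.e.\ give \emph{upper} bounds of that order, which is precisely why the Barreira--Saussol inequality is oriented as it is. So the inequality $\mu\textrm{-}\essinf\overline{R}(x)\ge\dim_P^-(\mu)$, on which the whole transfer to Proposition~\ref{densepack} rests, is not established, and as a general statement it is not available.

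The paper's proof does not pass through local dimensions at all: as stated just before the proposition, it adapts the proof of Proposition~3.3 together with Remark~3.1 of~\cite{AS}, replacing Lemma~2.3 there by Lemma~\ref{dimpos01}. The mechanism is the one visible in the proof of Lemma~\ref{dimpos01}: for Lipschitz $T$ one has $B(x,\ve\Lambda^{-n})\subseteq B(x,n,\ve)$, so return times to small balls are bounded below by return times to Bowen balls, and these are in turn controlled from below through the metric entropy (Brin--Katok/Ornstein--Weiss-type estimates, cf.\ Corollary~2.1 in the reference used there); density then comes from Proposition~\ref{central1}, i.e.\ from ergodic measures of large entropy, rather than from Proposition~\ref{densepack}. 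In other words, entropy --- not packing dimension --- is the quantity that forces $\overline{R}$ to be large, exactly because dimension only bounds recurrence from above in general. To salvage your route you would have to prove the reverse inequality $\overline{R}(x)\ge\overline{d}_\mu(x)$ a.e.\ for ergodic measures of these full shifts, which is essentially the same Bowen-ball/entropy argument the paper uses, so nothing is gained by the detour through Proposition~\ref{densepack}.
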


\begin{proof1}
\begin{itemize}
\item[1.]  Note that, by Proposition 2.1 in~\cite{AS} (see also Remark~2.1 in~\cite{AS}) and by Proposition~\ref{densepack}, $PD=\bigcap_{L\ge 1}\{\mu\in \M_e\mid\dim_P^-(\mu)> L\}$ is a countable intersection of dense $G_{\delta}$ subsets of $\M_e$. The result follows now from the fact that $\M_e$ is a dense $G_\delta$ subset of $\mathcal{M}(T)$ (see~\cite{Oxtoby1963,Parthasarathy1961}).
%
\item[2.] Given that $\overline{\mathcal{R}}=\bigcap_{L\ge 1}\{\mu\in \M(T)\mid \mu\textrm{-}\essinf \overline{R}(x)\ge L\}$, the result  follows from Proposition~\ref{denseretpos}, Proposition~3.1 and Theorem~1.1-(I) in~\cite{AS}, and from the fact that $\M_e$ is a dense $G_\delta$ subset of $\mathcal{M}(T)$. 
\item[3.] The result is a direct consequence of item 1 and the second inequality in~\eqref{Gala}.
\end{itemize}
\end{proof1}







\section{$CD$ is a dense $G_\delta$ set for each $q>1$}
\label{section3}

Here, we only prove that for each $q>1$, $CD=\{\mu\in \M(T)\mid D^+_{\mu}(q)=+\infty\}$ is a dense subset of $\mathcal{M}(T)$ in case $M$ is a countable compact metric space (this is Proposition~\ref{central2}). The fact that these sets are $G_\delta$ subsets of $\M(T)$ is proved in Proposition~2.4 in~\cite{AS2}.

We adapt the proof of Proposition~3.3 in~\cite{AS2} for this setting. The strategy involves a modified version of the energy function $I_{\mu}(q,\ve)=\int \mu(B(x,\ve))^{q-1}d\mu(x)$: for each $q>1$, each $\ve>0$, each $n\in\N$ and each $\mu\in\M(T)$, set 
\[I_{\mu}^{n}(q,\ve):=\int \mu(B^{n}(x,\ve))^{q-1}d\mu(x),\]
where $B^{n}(x,\ve)  := \cdots \times M \times \cdots \times M \times B_M(x_{-n},\ve)\times \cdots  \times B_M(x_{n},\ve)\times M\times \cdots \times M\times \cdots$, and $B_M(z,\ve):=\{w\in M\mid d(w,z)<\ve\}$.

The next results are extracted from~\cite{AS2}.

\begin{lema}[Lemma~3.1 in~\cite{AS2}]
\label{inc00}
Let $\ve>0$. Then, there exists $n_0\in \N$ such that for each $x\in X$, $B(x,\ve) \subseteq B^{n_0}(x,\ve)$.
\end{lema}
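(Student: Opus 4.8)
The plan rests on the structural feature of the sub-exponential metric~\eqref{metric2}: the contribution of the $n$-th coordinate is capped at $\tfrac{1}{a_{|n|}+1}$, so that, once $\ve$ is fixed, only the finitely many coordinates whose cap is at least $\ve$ can be ``detected'' by an $\ve$-ball, all the others being automatically free. Concretely, since every summand in~\eqref{metric2} is non-negative, $d(x,y)<\ve$ forces $\min\bigl\{\tfrac{1}{a_{|i|}+1},\rho(x_i,y_i)\bigr\}<\ve$ for each individual $i\in\Z$; and whenever $\tfrac{1}{a_{|i|}+1}\ge\ve$, this can only happen because $\rho(x_i,y_i)<\ve$, i.e. $y_i\in B_M(x_i,\ve)$.

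To choose $n_0$ I would use that $(a_n)$ is monotone increasing, so $n\mapsto\tfrac{1}{a_n+1}$ is monotone decreasing, and that $\sum_{k\ge0}\tfrac{1}{a_k+1}<\infty$ forces $a_k\to\infty$, hence $\tfrac{1}{a_k+1}\to0$. Therefore $\{\,k\ge0\mid \tfrac{1}{a_k+1}\ge\ve\,\}$ is a finite initial segment $\{0,1,\dots,n_0\}$ of $\N\cup\{0\}$ (non-empty for every $\ve$ of interest; see below), and I take this $n_0$. Then, for every $x\in X$, every $y\in B(x,\ve)$ and every $i$ with $|i|\le n_0$, one has $\tfrac{1}{a_{|i|}+1}\ge\tfrac{1}{a_{n_0}+1}\ge\ve$, so the observation of the previous paragraph gives $y_i\in B_M(x_i,\ve)$; since the coordinates with $|i|>n_0$ carry no constraint in $B^{n_0}(x,\ve)$, this says exactly that $y\in B^{n_0}(x,\ve)$. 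Hence $B(x,\ve)\subseteq B^{n_0}(x,\ve)$, with $n_0$ depending on $\ve$ alone.

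I do not foresee a genuine obstacle here: the argument is short metric bookkeeping. The only point requiring a word of care is that $\{\,k\ge0\mid\tfrac{1}{a_k+1}\ge\ve\,\}$ is non-empty precisely when $\ve\le\tfrac{1}{a_0+1}$, which already includes every $\ve$ small enough to be relevant for the generalized fractal dimensions $D_\mu^{\pm}(q)$. For larger $\ve$ the inclusion is either trivial ($\ve>\diam(M)$ gives $B_M(x_0,\ve)=M$, whence $B^{0}(x,\ve)=X\supseteq B(x,\ve)$) or is recovered after the harmless normalization $\rho\mapsto\min\{\rho,1\}$ on $M$ — which changes neither the product topology nor the generalized fractal dimensions of any Borel measure, these depending only on arbitrarily small scales — together with the choice $a_0=0$ as in the example $a_n=n^2$, so that $\tfrac{1}{a_0+1}=1\ge\diam(M)$ and the displayed set is non-empty for every $\ve>0$.
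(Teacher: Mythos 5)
Your proof is correct and is essentially the intended argument: the paper itself only cites Lemma~3.1 of~\cite{AS2} for this statement, and your choice of $n_0$ as the largest $k$ with $\frac{1}{a_k+1}\ge\ve$ (so that each term of the sum in~\eqref{metric2} being $<\ve$ forces $\rho(x_i,y_i)<\ve$ for $|i|\le n_0$) is precisely what the paper relies on later, since for $a_n=n^2$ it gives $n_0\ge(\frac{1}{\ve}-1)^{1/2}-1$, the bound invoked in the proof of Proposition~\ref{central2}. Your closing caveat about large $\ve$ (trivial when $\ve>\diam(M)$, or removed by normalizing $\rho$, and in any case irrelevant because only the regime $\ve\to 0$ is ever used) correctly addresses the one delicate point in the literal statement.
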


\begin{propo}[Proposition~3.2 in~\cite{AS2}]
\label{propcorrelation}
Let $q>1$. Then, 
\[D_{\mu}^{+}(q)= \limsup_{\ve \to 0} \frac{ \log I_{\mu}(q,\ve)}{(q-1)\log \ve}\geq D_{\mu,n_0}^+(q):=\limsup_{\ve \to 0} \frac{\log  I^{n_0}_{\mu}(q,\ve)}{(q-1)\log \ve},\]
where $n_0=n_0(\ve)$ is given by Lemma \ref{inc00}.
\end{propo}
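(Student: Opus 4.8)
The plan is to read off the inequality directly from the set inclusion furnished by Lemma~\ref{inc00}, combined with monotonicity of $\mu$ and of the map $t\mapsto t^{q-1}$ (available since $q-1>0$), followed by the reversal of the inequality that occurs when one divides by the negative quantity $(q-1)\log\ve$. The first equality in the statement is merely Definition~\ref{gfd}, so the whole content is the bound $D_\mu^+(q)\ge D_{\mu,n_0}^+(q)$.

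Concretely, I would fix $q>1$, take $\ve\in(0,1)$, and let $n_0=n_0(\ve)\in\N$ be the index provided by Lemma~\ref{inc00}, so that $B(x,\ve)\subseteq B^{n_0}(x,\ve)$ for every $x\in X$. Monotonicity of the measure then gives $\mu(B(x,\ve))\le\mu(B^{n_0}(x,\ve))\le 1$ for all $x$, and since $t\mapsto t^{q-1}$ is nondecreasing on $[0,1]$ we obtain $\mu(B(x,\ve))^{q-1}\le\mu(B^{n_0}(x,\ve))^{q-1}$ pointwise. Integrating against $\mu$ yields $0<I_\mu(q,\ve)\le I^{n_0}_\mu(q,\ve)\le 1$, where the strict lower bound comes from the integrand being strictly positive on $\supp(\mu)$, a set of full $\mu$-measure; in particular $\log I_\mu(q,\ve)$ and $\log I^{n_0}_\mu(q,\ve)$ are finite and $\le 0$. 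Applying the (increasing) logarithm gives $\log I_\mu(q,\ve)\le\log I^{n_0}_\mu(q,\ve)\le 0$, and dividing by $(q-1)\log\ve<0$ reverses the inequality:
\[\frac{\log I_\mu(q,\ve)}{(q-1)\log\ve}\ \ge\ \frac{\log I^{n_0}_\mu(q,\ve)}{(q-1)\log\ve}\qquad\text{for every }\ve\in(0,1).\]
Taking $\limsup_{\ve\downarrow 0}$ on both sides (which preserves pointwise inequalities) and recalling Definition~\ref{gfd} then yields $D_\mu^+(q)\ge D_{\mu,n_0}^+(q)$, as asserted.

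This argument has no genuine obstacle; it is a short monotonicity estimate. The only two points deserving a moment's attention are the sign reversal upon dividing by $(q-1)\log\ve$, which is exactly what makes the bound point in the stated direction, and the fact that $n_0$ depends on $\ve$: this causes no trouble, because the displayed pointwise inequality is established separately for each $\ve$ using the corresponding $n_0(\ve)$, so no uniform choice of $n_0$ is needed before passing to the limit.
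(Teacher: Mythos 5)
Your argument is correct and is exactly the intended one: the paper does not reprove this statement but quotes it from \cite{AS2}, and the proof there is the same monotonicity estimate — the inclusion $B(x,\ve)\subseteq B^{n_0(\ve)}(x,\ve)$ from Lemma~\ref{inc00} gives $I_\mu(q,\ve)\le I_\mu^{n_0}(q,\ve)$ pointwise in $\ve$, and dividing by $(q-1)\log\ve<0$ before taking $\limsup$ yields the inequality. Your handling of the two delicate points (the sign reversal and the $\ve$-dependence of $n_0$) matches the intended reading of the statement, so nothing is missing.
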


The last ingredient needed in the proof is an adaptation of Lemma~1 in~\cite{Sigmund1970}. Here, the alphabet $M$ must be a compact metric space with a countable set of isolated points and with at most a finite number of accumulation points (such accumulation points can be replaced by isolated points in the proof, which we omit; see the proof of Lemma~3.2 in~\cite{AS2} for details). 
\begin{lema}
  \label{L6S2}
  Let $\mu\in \M(T)$ and let $U$ be an open basic (weak) neighborhood of $\mu$. Then, there exist $m_0,n_0\in\N$ such that for each $\tau\ge m_0n_0$, $\mu_x\in U\cap \mathcal{M}(T)$, where $x=(x_i)$ is a $T$-periodic point with period $\tau$ and $\mu_x(\cdot):=\frac{1}{\tau}\sum_{i=0}^{\tau-1}\delta_{T^ix}(\cdot)$. Moreover, there exists $\tilde{N}\in\N$ such that for each $N\in\N$, one may choose $\tau\ge m_0n_0$ such that $\tau/s>\tilde{N}$, where $1<s\le n_0N$ is the number of different entries of $(x_0,\ldots,x_{\tau-1})$, with each $x_i$ an isolated point of $M$.
\end{lema}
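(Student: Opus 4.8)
The goal is to produce, inside any prescribed basic weak neighborhood $U$ of $\mu$, a $T$-periodic point $x$ whose empirical measure $\mu_x$ lies in $U$, with the additional feature that the period $\tau$ can be taken as large as we like relative to the number $s$ of distinct symbols appearing in one period block, and all those symbols are isolated points of $M$. The approach is a two-stage approximation that mirrors Lemma~1 in~\cite{Sigmund1970}, but adapted to the countable compact alphabet. First I would replace $\mu$ by a nearby measure supported on a ``finitary'' sub-shift: since $U$ is determined by finitely many continuous test functions $f_1,\dots,f_k$ and a tolerance $\gamma>0$, and since each $f_j$ depends (up to $\gamma/3$) on only finitely many coordinates, one may restrict attention to a finite window $[-n_0,n_0]$; moreover, because $M$ is a countable compact metric space, it has a countable set of isolated points accumulating at finitely many points, and one can approximate any probability measure on $M$ (weakly) by one supported on finitely many isolated points (the finitely many accumulation points are themselves limits of isolated points, so they can be perturbed away at the cost of another $\gamma/3$). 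This produces a $T$-invariant measure $\mu'$, still in $U$, all of whose ``marginals'' are supported on a finite set $\{z_1,\dots,z_s\}$ of isolated points of $M$, with $s\le n_0 N$ controllable.

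**Building the periodic orbit.** The second stage is the standard combinatorial construction: given $\mu'$ as above, one approximates it by a periodic measure by choosing a long block $(x_0,\dots,x_{\tau-1})$ over the alphabet $\{z_1,\dots,z_s\}$ whose empirical distribution of length-$(2n_0+1)$ sub-words matches, to within $\gamma/3$, the corresponding marginal of $\mu'$. By a de~Bruijn / frequency-matching argument (exactly as in the proof of Lemma~1 in~\cite{Sigmund1970} and Lemma~3.2 in~\cite{AS2}) such a block exists once $\tau$ is a sufficiently large multiple of some base length $n_0$; write $\tau\ge m_0 n_0$. Extending this block periodically gives the desired $T$-periodic point $x$, and $\mu_x\in U\cap\mathcal{M}(T)$ because the test functions $f_j$ are nearly functions of the window $[-n_0,n_0]$ and $\int f_j\,d\mu_x$ is precisely the empirical sub-word average. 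The key flexibility is that the frequency-matching admits ``padding'': once a block of length $\tau_0$ works, one can repeat and interleave it (or concatenate with negligibly-weighted filler over the same symbol set) to realize any sufficiently large $\tau$ while keeping $s$ fixed; hence $\tau/s$ can be made to exceed any prescribed $\tilde N$.

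**The main obstacle.** The genuinely delicate point is handling the accumulation points of $M$: the construction must output symbols that are \emph{isolated} points, yet the approximating measure $\mu'$ could a priori put mass on an accumulation point $w$. Since $w$ is a limit of isolated points $w^{(i)}\to w$ and every $f_j$ is continuous, replacing $w$ by a close enough $w^{(i)}$ changes $\int f_j$ by at most $\gamma/3$ uniformly; because there are only finitely many accumulation points, this substitution can be performed simultaneously for all of them. This is precisely where the hypotheses ``$M$ countable \emph{and} compact'' are used, and it is the step whose proof the statement (and~\cite{AS2}) chooses to omit; I would make it explicit as the one lemma requiring care. The rest — the reduction to a finite window, the frequency-matching, and the padding argument giving control of $\tau/s$ — is routine bookkeeping along the lines of~\cite{Sigmund1970,AS2}, so I would state it and refer to those sources rather than reproduce the combinatorics in full.
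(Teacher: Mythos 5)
Your proposal is correct and follows essentially the same route as the paper, which proves this lemma only by invoking Lemma~1 of Sigmund (1970) together with the observation that, since the countable compact alphabet has finitely many accumulation points, these can be replaced by nearby isolated points (details deferred to the proof of Lemma~3.2 in \cite{AS2}). Your sketch---finite-window reduction, perturbation of accumulation points to isolated ones, Sigmund-style frequency matching, and padding to make $\tau/s$ large---is exactly this adaptation, with the accumulation-point substitution correctly identified as the only step needing care.
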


\begin{propo}
\label{central2}
Let $\mu\in \mathcal{M}(T)$, let $V\subset\M(T)$ be a (weak) neighborhood of $\mu$ and let $q>1$. Then, there exists $\rho\in V$ such that $D_{\rho}^+(q)=+\infty$.
\end{propo}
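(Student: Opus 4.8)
The plan is to construct, inside any prescribed weak neighborhood $V$ of $\mu$, a $T$-periodic measure $\rho=\mu_x$ whose $q$-generalized fractal dimension is infinite, by carefully choosing the period $\tau$ and the number $s$ of distinct isolated symbols appearing in the periodic word so that the modified energy function $I^{n_0}_\rho(q,\ve)$ decays to zero \emph{faster} than any power of $\ve$ as $\ve\to 0$ (along a suitable sequence), and then invoking Proposition~\ref{propcorrelation} to transfer this estimate to $D^+_\rho(q)$. First I would apply Lemma~\ref{L6S2}: it provides $m_0,n_0\in\N$ and $\tilde N\in\N$ such that every $T$-periodic point $x$ with period $\tau\ge m_0n_0$ yields $\mu_x\in V\cap\M(T)$, and moreover, for any target $N$, one can pick $\tau$ with $\tau/s>\tilde N$, where $s$ (with $1<s\le n_0N$) counts the distinct entries of $(x_0,\dots,x_{\tau-1})$, all of which are isolated points of $M$.

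The key computation is to estimate $\mu_x(B^{n_0}(x,\ve))$ from above for small $\ve$. Since the $s$ distinct symbols are isolated points of the (compact countable) alphabet $M$, there is a threshold $\ve_0>0$ below which, for each such symbol $z$, $B_M(z,\ve)=\{z\}$; hence for $\ve<\ve_0$ the cylinder $B^{n_0}(x,\ve)$ intersected with the orbit of $x$ consists exactly of those orbit points sharing with $x$ the same $(2n_0+1)$-block of central coordinates. Because the periodic word uses only $s$ letters, a crude but sufficient bound is that each such central block is repeated at most $\tau\cdot s^{-(2n_0+1)}\cdot(\text{something})$ — more precisely I would bound $\mu_x(B^{n_0}(T^ix,\ve))$ by (number of $j$ with the same central block)$/\tau$, and then bound $I^{n_0}_{\mu_x}(q,\ve)=\frac1\tau\sum_i \mu_x(B^{n_0}(T^ix,\ve))^{q-1}$. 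The point is that by the equidistribution-type choice in Lemma~\ref{L6S2} (period large relative to $s$, i.e. $\tau/s>\tilde N$), one forces each fiber to have measure at most $C\,s/\tau<C/\tilde N$, so $I^{n_0}_{\mu_x}(q,\ve)\le (C/\tilde N)^{q-1}$ for all $\ve<\ve_0$. Taking $\ve$ ranging down to $0$ (with $n_0=n_0(\ve)$ adjusted via Lemma~\ref{inc00}/Proposition~\ref{propcorrelation}, or by first fixing $n_0$ and letting $\ve\downarrow 0$ along the range where $B_M(z,\ve)=\{z\}$), one gets $\limsup_{\ve\to0}\frac{\log I^{n_0}_{\mu_x}(q,\ve)}{(q-1)\log\ve}$ as large as one wishes by choosing $N$, hence $\tilde N$, large; since $V$ and $q$ were arbitrary, a diagonal argument over a sequence $N_k\to\infty$ produces a single $\rho\in V$ with $D^+_\rho(q)=+\infty$.

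To actually land $D^+_\rho(q)=+\infty$ (not just arbitrarily large) for one fixed $\rho$, I would be more careful: rather than one periodic measure, build a sequence $\rho_k\to\mu$ weakly, but this only gives density of measures with large finite dimension. The cleaner route — which is what the structure of the paper suggests — is that Proposition~\ref{central2} proves \emph{density} of $CD$ in each neighborhood, and combined with the already-known fact (Proposition~2.4 in~\cite{AS2}) that $CD$ is $G_\delta$, one concludes $CD$ is a dense $G_\delta$. So here it suffices to produce \emph{some} $\rho\in V$ with $D^+_\rho(q)=+\infty$; for that, within the fixed neighborhood $V$ I would nest the construction: use Lemma~\ref{L6S2} to get, for each $k$, a sub-neighborhood and a periodic point $x^{(k)}$ whose measure lies in $V$ and whose $I^{n_0}$-energy at scale $\ve_k$ is at most $2^{-k}$-type small while $\ve_k\downarrow0$, arranging the periods so that the single limiting periodic-like measure (or a measure in the closure, which still lies in the closed set... ) — actually the sharpest formulation: iterate Lemma~\ref{L6S2} to get a single periodic point whose word, on longer and longer blocks, looks more and more "spread out", giving $\liminf$ over a sequence $\ve_k\to0$ of $\log I^{n_0}_\rho(q,\ve_k)/((q-1)\log\ve_k)=+\infty$. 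The main obstacle I anticipate is exactly this last bookkeeping step: ensuring that a \emph{single} periodic measure $\rho$ simultaneously sits in $V$ and has the energy function decaying super-polynomially along a full sequence $\ve_k\to0$, which requires choosing the period $\tau$ and symbol count $s$ to depend on $k$ in a compatible, monotone way (larger $k$ forces both finer scale $\ve_k$ and larger $\tilde N$, hence larger $\tau$), while keeping $\mu_\rho$ inside the \emph{fixed} weak-$*$ neighborhood $V$ — this is where the uniformity built into Lemma~\ref{L6S2} (the $m_0,n_0$ depending only on $U=V$) is essential. The rest is the elementary estimate of $\mu_x(B^{n_0}(x,\ve))$ for isolated symbols, plus Proposition~\ref{propcorrelation} to pass from $I^{n_0}$ back to $I$ and hence to $D^+_\rho(q)$.
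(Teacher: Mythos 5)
Your approach has a fatal flaw at its core: you propose to take $\rho$ to be a $T$-periodic measure $\mu_x$ (or, in the refined version, still a single periodic measure built by iterating Lemma~\ref{L6S2}). No such measure can have $D^+_\rho(q)=+\infty$; in fact every finitely supported (indeed every atomic) measure has $D^+_\rho(q)=0$ for $q>1$. If $\rho$ has an atom of mass $p>0$, then $I_\rho(q,\ve)\ge p\cdot p^{q-1}=p^q>0$ for every $\ve$, and the same lower bound holds for $I^{n_0}_\rho(q,\ve)$; since $\log I_\rho(q,\ve)$ stays bounded while $(q-1)\log\ve\to-\infty$, the quotient tends to $0$. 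This is exactly where your key estimate goes wrong: the bound ``$I^{n_0}_{\mu_x}(q,\ve)\le (C/\tilde N)^{q-1}$ for all $\ve<\ve_0$'' is a bound by a constant \emph{independent of} $\ve$, and a constant bound, however small, gives dimension zero, not a large dimension. To get $D^+_\rho(q)=+\infty$ one needs $I_\rho(q,\ve)$ to decay to $0$ faster than every power of $\ve$ as $\ve\to0$, and no amount of choosing $\tau$, $s$ or $\tilde N$ for a periodic orbit can produce that; the subsequent diagonal/nesting step cannot repair this, because each candidate measure it produces is still atomic.

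The paper's proof uses the periodic point $w$ from Lemma~\ref{L6S2} only as an anchor inside $V$; the measure actually placed in $V$ is the non-atomic pushforward $\rho_*=\rho\circ\varphi^{-1}$ of a Sigmund-type Markov chain $\rho$ on $\tau$ fresh isolated symbols, with transition probability $1-\kappa$ to the ``natural follower'' and $\kappa/(\tau-1)$ otherwise (this is Lemma~7 of Sigmund's argument, which also guarantees $\rho_*\in V_\mu(f_1,\ldots,f_d;\delta)$). For such a measure the mass of a central cylinder of length $2n+1$ decays essentially like $\bigl((s-1)^{1-q}\kappa^q+(1-\kappa)^q\bigr)^{2n}\le a^{2n}$ with $a=\kappa^q+(1-\kappa)^q<1$, and since the sub-exponential metric forces $n_0(\ve)\ge(1/\ve-1)^{1/2}-1$ (Lemma~\ref{inc00}), the energy $I^{n_0}_{\rho_*}(q,\ve)$ decays super-polynomially in $\ve$, whence $D^+_{\rho_*}(q)=+\infty$ via Proposition~\ref{propcorrelation}. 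The condition $\tau/s>\tilde N$ from Lemma~\ref{L6S2} (i.e.\ inequality~\eqref{tau}) is not used to make orbit ``fibers'' small in your sense; it is needed to control the combinatorial prefactor $A=\bigl(\prod_i m_i\bigr)^{(2n+1)/\tau+1}$ coming from the non-injectivity of the relabeling map $\varphi$, so that it does not overwhelm the exponential decay $a^{2n}$. Your observation about isolated symbols (that $B_M(z,\ve)=\{z\}$ below a threshold, identifying $B^{n}(x,\ve)$ with a cylinder) is correct and is also used in the paper, but without the Markov perturbation and the sub-exponential scale estimate the argument cannot reach infinite dimension.
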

\begin{proof}
  The proof follows closely the proof of Proposition~3.3 in~\cite{AS2}, with the required adaptations. We present all the details.

  Let $\delta>0$ and set 
\[V=V_{\mu}(f_1,\cdots, f_d;\delta) =\left\{ \sigma \in \mathcal{M}\mid \left| \int f_jd\mu - \int f_jd\sigma  \right|<\delta,\, j=1,\ldots,d \right\},\]
where each $f_j\in C(M^{\Z})$ (this is the set of continuous real valued functions on $M^{\Z}$, endowed with the supremum norm). One can further assume that there exists $N\in\N$ such that, for each $j=1,\ldots,d$, one has $f_j(x)=f_j(y)$ if, for each $|i|\leq N$, $x_i=y_i$. Note that since $M$ is compact, functions of this type form a dense set in $C(M^{\Z})$.

Let $L=\sup\{|f_j(x)|\mid x\in M^{\Z},\, j=1,\ldots,d\}$, let $0<\kappa<1$ be such that
\begin{equation}
\label{kappaineq}
\begin{split}
\kappa< (8L)^{-1}\,2^{-(2N+1)}\,\delta,\\
2[1-(1-\kappa)^{2N+1}]<(8L)^{-1}\delta,
\end{split}
\end{equation}
and set $a:=\kappa^q+(1-\kappa)^q<1$ (note that for each $0<\kappa<1$, the mapping $(0,\infty)\ni q\mapsto \kappa^q+(1-\kappa)^q\in (0,\infty)$ is monotone decreasing, and since $f(1)=1$, it follows that $f(q)<1$ for each $q>1$).

It follows from Lemma~\ref{L6S2} that for each $\tilde{N}\in\N$ such that $\frac{1}{\tilde{N}}\log(\tilde{N})<-\frac{\log(a)}{q}$, there exists a $T$-periodic point $w=(w_i)\in M^{\Z}$ with period $\tau$ such that
\begin{equation}\label{tau}\log(a)+q\cdot\frac{s}{\tau}\log\left(\frac{\tau}{s}\right)<0
  \end{equation}
and
$\mu_w\in V_{\mu}(f_1,\cdots, f_d; \delta/2)$. 

Let $\{a_1,\ldots,a_s\}$ be the distinct entries of $(w_0,\ldots,w_{\tau-1})$, and for each $i=1,\ldots,s$, let $m_i$ be the number of times that $a_i$ occurs in $(w_0,\ldots,w_{\tau-1})$; naturally, $m_i\ge 1$ and
\[\sum_{i=1}^sm_i=\tau.\]

Let, for each $i=1,\ldots,s$, $\{y_i^j\}_{j=1}^{m_i}\subset M$ be such that $y_i^j\neq y_k^l$ and $a_i\neq y_i^j$ for each $i,k=1,\ldots,s$, $j,l=1,\ldots,m_i$, and let $\phi:M\rightarrow M$ be the map defined by the law
\begin{eqnarray*}
  \phi(y_i^j)&=&a_i, \qquad i=1,\ldots,s\quad j=1,\ldots,m_i,\\
    \phi(a_i)&=&y_i^1,\qquad i=1,\ldots,s,\\
    \phi(x)&=&x,  \qquad x\in M\setminus[\cup_{i=1}^s(\{a_i\}\cup\cup_{j=1}^{m_i}\{y_i^j\})].
\end{eqnarray*}

Finally, let $\varphi:X\to X$ be the map defined by the law
\[\varphi(\ldots,x_{-n},\ldots,x_n,\ldots):=(\ldots,\phi(x_{-n}),\ldots,\phi(x_n),\ldots).\]

It is clear that $\varphi$ is measurable. Furthermore, it maps cylinder sets centered at $\{y_1^{m_1},\ldots,y_s^{m_s}\}$  to cylinder sets centered at $\{a_1,\ldots,a_s\}$; that is,  $\varphi(C^n_y)=\varphi([-n; y_{i_{-n}},\ldots,y_{i_{n}}])=C^n_w=[-n; \phi(y_{i_{-n}}),\ldots,\phi(y_{i_{n}})]$, where $y_{i_k}\in\{y_1^{m_1},\ldots,y_s^{m_s}\}$.

Following the proof of Lemma 7 in~\cite{Sigmund1971},  one defines, for $\tau$ defined as before, a Markov chain $\rho$ whose states are $y_1^{1},\ldots,y_1^{m_1}$, $y_2^{1},\ldots,y_2^{m_2},\ldots, y_s^{1},\ldots, y_s^{m_s}$ (ordered so that $(\phi^{-1}(y_{j_0}^{i_0}),\ldots,\phi^{-1}(y_{j_{\tau-1}}^{i_{\tau-1}})=(w_0,\ldots,w_{\tau-1})$; set $(y_0,\ldots,y_{\tau-1}):=(y_{j_0}^{i_0},\ldots,y_{j_{\tau-1}}^{i_{\tau-1}})$ and $y:=(\ldots,y_{\tau-1},y_0,y_1,\ldots,y_{\tau-2},y_{\tau-1},y_0,\ldots)$), whose initial probabilities are given by the $\tau$-tuple  $(1/\tau,\cdots,1/\tau)$ and whose  transition probabilities are given by the $\tau\times \tau$-matrix $p_{ij}$, where
\begin{eqnarray*}
p_{\tau\,1}&=&1-\kappa,\\
p_{i\,i+1}&= & 1-\kappa ~\,\mbox{ for } i=1,\ldots, \tau-1,\\
p_{i\,j}&=& \frac{\kappa}{\tau-1} ~\mbox{ otherwise.}
\end{eqnarray*}

Let $\rho_*:=\rho\circ\varphi^{-1}$ (the pushforward of the invariant measure $\rho$). It is clear that 
\begin{eqnarray}
\label{pushforward}
\rho_*(C^n_w)=\rho_*([-n; w_{i_{-n}},\ldots,w_{i_{n}}])&=&\rho(\varphi^{-1}([-n; w_{i_{-n}},\ldots,w_{i_{n}}]))\nonumber\\
&=&\rho(\cup C^n_y),
\end{eqnarray}
where $w_{i_k}\in\{a_1,\ldots,a_s\}$, and $\rho_*(C^n_x)=0$, otherwise.

  One can show (see the proof of Lemma 7 in~\cite{Sigmund1971}) that $\rho_*\in V_{\mu_w}(f_1,\cdots, f_d; \delta/2)$, from which follows that $\rho_*\in V_{\mu}(f_1,\cdots, f_d; \delta)$.

Now, by Proposition \ref{propcorrelation}, one just needs to prove that $D_{\rho_*,n_0}^+(q)=\infty$.  Let $\ve\in(0,\min\{1,\ve_0\})$, with $\ve_0:=\min\{\min\{d(a_i,a_l) \mid \,i,l=1,\ldots,\tau, i\neq l\},\min\{d(a_i,z)\mid z\in M\setminus\{a_1,\ldots,a_s\}\}\}$ (naturally $\ve_0>0$, since each $a_i$ is an isolated point of $M$), and set $n=n_0(\ve)$. 

For each $x\in C^n_w$, it is clear from the choice of $\ve$ that $C^n_w= B^n(x,\ve)=\{ (z_i)_{i\in \Z}\in X\mid z_i\in B(x_i,\ve), ~i=-n,\ldots,n\}$ and so, from relation~\eqref{pushforward}, that
\[\rho_*(B^n(x,\ve))=\rho(\cdots\times M\times\phi^{-1}(\{x_{-n}\})\times \phi^{-1}(\{x_{n}\})\times M\times\cdots)=\rho(\cup C^n_y)\]
(if there exists $-n\le k\le n$ such that $x_k\notin\{a_1,\ldots,a_s\}$, then $\rho_*(B^n(x,\ve))=0$).

Moreover, as in Lemma 7 in \cite{Sigmund1971}, there exist $s^{2n+1}$ $C^n_w$-like sets (and also $\tau^{2n+1}$ $C^n_y$-like sets) that can be split into two groups, say $P$ and $Q$. $P$ consists of those $s$ sets  which contain an element of the orbit of $w$. The second group, $Q$, splits into the groups $Q_1,\ldots, Q_{2n}$, where $Q_p$ is the group of those $s \,\binom {2n} {p}\, (s-1)^p$ $C^n_w$-like sets for which there are exactly $p$ places $i=-n,\ldots,n$ where $x_{i+1}$  is not the {\em natural follower} of $w_i$, in the sense that if $x_i=w_l$ and $x_{i+1}=w_m$, then $m\neq l+1(\modd s)$.

Thus, since $I_{\rho_*}^n(q,\ve)$ depends only on the values taken by $\rho_*(B^{n}(x,\ve))$ when $x$ ranges over the $C^n_w$-like sets described above, one has 
\begin{eqnarray}
  \label{e7}
\int \rho_*(B^{n}(x,\ve))^{q-1}d\rho_*(x)\nonumber
&=& \int \rho_*(C^n_x)^{q-1}d\rho_*(x)\nonumber\\
&=& \sum_{j=1}^{s^{2n+1}} \int_{\{C^n_{w,j}\}} \rho_*(C^n_{w,j})^{q-1}d\rho_*(x)+   \int_{X\setminus\left\{C^n_{w,j}\right\}_{j=1}^{s^{2n+1}}} \rho_*(C^n_{w,j})^{q-1}d\rho_*(x)\nonumber\\
&=& \sum_{j=1}^{s^{2n+1}} \int_{\{C^n_{w,j}\}} \rho_*(C^n_{w,j})^{q-1}d\rho_*(x) = 
\sum_{j=1}^{s^{2n+1}} \rho_*(C^n_{w,j})^{q-1}\rho_*(C^n_{w,j})\nonumber\\
&=&\sum_{j=1}^{s^{2n+1}} \rho_*(C^n_{w,j})^{q}
~=~ \sum_{C^n_w\in P}\rho_*(C^n_w)^{q} + \sum_{p=1}^{2n}\sum_{C^n_w\in Q_p}\rho_*(C^n_{w})^{q},
\end{eqnarray}
where, in the second line, we have used that for each $x\in C^n_w$ and each $0<\ve<\ve_0$, $\rho_*(B^n(x,\ve))=\rho_*(C^n_w)$, as previously discussed. 

Note that given the $\tau$-uple $w_\tau^k:=(w_{k},\ldots,w_{\tau+k-1})$, with $k\in\Z$ (which is a block of size $\tau$ of the periodic point $w$), there exist $m_1\cdots m_s$ possible combinations of the entries of $w_\tau^k$ that also result in $w_\tau^k$; thus, since each $C_{w}^n\in P$ is centered at a translate of $w$, and so it is the adjoining of at most $[(2n+1)/\tau]+1$ $\tau$-uples of type $w_\tau^k$ for some $k\in\{0,\ldots,\tau-1\}$ (here, $[z]$ stands for the integer part of $z\in\mathbb{R}$; we also assume that $(2n+1)/\tau>1$), it follows that $N_P:=\#\{C_{y,j}^n\mid\varphi(C_{y,j}^n)=C_w^n\}$ (that is, the number of cylinders $C^n_{y,j}$ such that $\varphi(C^n_{y,j})=C_w^n$) is at most $A:=\left(\prod_{i=1}^sm_i\right)^{(2n+1)/\tau+1}$, for each $C_w^n\in P$; therefore, it follows that
\begin{eqnarray}\label{e71}
  \nonumber \sum_{C^n_w\in P}\rho_*(C^n_w)^{q}
  &\le& \sum_{C^n_w\in P}\left(\sum_{j:\varphi(C_{y,j}^n)=C_w^n}\rho(C_{y,j}^n)\right)^q\le s\cdot N_P^{q-1}\sum_{j=1}^{N_P}\rho(C_{y,j}^n)^q\\
 & \le & \frac{A^q}{\tau^{q-1}}(1-\kappa)^{2nq},
\end{eqnarray}
where we have used Jensen's inequality in the second inequality and the fact that for each $C^n_{y,j}=[-n;y_{i_{-n}},\ldots,y_{i_n}]$ such that $\varphi(C^n_{y,j})\in P$, 
\[\rho(C^n_{y,j})=\frac{1}{\tau}p_{y_{i_{-n}}y_{i_{-n+1}}}\cdots p_{y_{i_{n-1}}y_{i_{n}}}\le \frac{1}{\tau}\cdot(1-\kappa)^{2n}.\]
 
Now, fix $p\in\{1,\ldots,2n\}$, $C^n_w\in Q_p$, and let $\{k_l\}_{l=1}^p$, with $k_1<\cdots<k_p$, be the indices on the ($2n+1$)-uple $(x_{-n},\ldots,x_n)$ (at which $C^n_w$ is centered) such that for each $l\in\{1,\ldots,p\}$, $x_{k_l}$ is not the natural follower of $x_{k_{l-1}}$. Then, one has (by using the same reasoning as before)
\begin{eqnarray*}\rho_*(C^n_{w})^{q}&=&\left(\sum_{j:\varphi(C_{y,j}^n)=C^n_w}\rho(C_{y,j}^n)\right)^{q}\le  \frac{A^q}{\tau^{q}}\left(\sum_{i_1\in\{1,\ldots, s\}:i_1\neq k_1}m_{i_1}^q\right)\cdots\\
  &&\cdots\left(\sum_{i_p\in\{1,\ldots, s\}:i_p\neq k_p} m_{i_p}^q\right)  \left( \frac{\kappa}{\tau-1} \right)^{pq}(1-\kappa)^{(2n-p)q},
\end{eqnarray*}
and so
\begin{eqnarray}
\label{e8}
\sum_{p=1}^{2n}\sum_{C^n_w\in Q_p}\rho_*(C_{w}^n)^q&\le& s\cdot\frac{A^q}{\tau^{q}}\sum_{p=1}^{2n} \binom {2n} {p}\left((s-1)^{1-q}\kappa^q\right)^p((1-\kappa)^q)^{(2n-p)}\nonumber\\ 
&=& \frac{A^q}{\tau^{q-1}}\left( \left((s-1)^{1-q}\kappa^q+(1-\kappa)^q \right)^{2n}- (1-\kappa)^{2nq}\right),
\end{eqnarray}
where we have used the fact that for each $p\in\{1,\ldots,2n\}$ and each $p$-uple $(k_1,\ldots k_p)$,
\[\left(\sum_{i_1\in\{1,\ldots, s\}:i_1\neq k_1}m_{i_1}^q\right)\cdots\left(\sum_{i_p\in\{1,\ldots, s\}:i_p\neq k_p} m_{i_p}^q\right)\le (s-1)^p\left(\frac{\tau-1}{s-1}\right)^{pq}\]
(recall that $\sum_{i=1}^sm_i=\tau$, with $m_i\ge 1$), along with the fact that for each $C_{y,j}^n$ such that $\varphi(C_{y,j}^n)\in Q_P$,
\[\rho(C_{y,j}^n)\le \frac{1}{\tau}\cdot\left(\frac{\kappa}{\tau-1}\right)^{p}\cdot (1-\kappa)^{2n-p}.\]

Thus, by combining \eqref{e7} with \eqref{e71} and (\ref{e8}), one gets 
\begin{eqnarray*}
\int \rho_*(B^{n}(x,\ve))^{q-1}d\rho_*(x)\nonumber
&\le & \frac{A^q}{\tau^{q-1}}\left[ (1-\kappa)^{2nq} +  \left((s-1)^{1-q}\kappa^q+(1-\kappa)^q \right)^{2n}- (1-\kappa)^{2nq} \right]\\
&=&   \frac{A^q}{\tau^{q-1}} \left((s-1)^{1-q}\kappa^q+(1-\kappa)^q \right)^{2n}.
\end{eqnarray*}
Note that 
\[\log\left((s-1)^{1-q}\kappa^q+(1-\kappa)^q \right)<\log\left(\kappa^q+(1-\kappa)^q\right)=\log(a)<0\] (given that $s\ge 2$), from which follows that
\begin{eqnarray*}
\log\left(\int \rho_*(B^{n}(x,\ve))^{q-1}d\rho_*(x)\right)
&\le&
(q\log(A)-(q-1)\log(\tau)) +2n \log(a)\\
&=& \left(q\left(\frac{2n+1}{\tau} +1\right)\log\left(\prod_{i=1}^sm_i\right)-(q-1)\log(\tau)\right)+2n\log(a)\\
&\leq & \left(q\left(\frac{2n+1}{\tau} +1\right)s\log\left(\frac{\tau}{s}\right)-(q-1)\log(\tau)\right)+2n\log(a);
\end{eqnarray*}
here, we have used that $\prod_{i=1}^sm_i\le(\tau/s)^s$.

Recall that, by Lemma \ref{inc00}, one has~$n\ge (\frac{1}{\ve}-1)^{1/2}-1$; thus,
\begin{eqnarray}\label{desimp}
\nonumber \frac{\log\left(\int \rho_*(B^{n}(x,\ve))^{q-1}d\rho_*(x)\right)}{(q-1)\log \ve}
&\geq &  \frac{2}{q-1}\left(q\cdot\frac{s}{\tau}\log\left(\frac{\tau}{s}\right)+\log(a)\right)\frac{(1/\ve-1)^{1/2} }{\log \ve}+\frac{C_{q,\tau,s}}{\log(\ve)},
\end{eqnarray}
with $C_{q,\tau,s}$ a constant that does not dependent on $\ve$. By letting $\ve\to 0$, it follows from relation~\eqref{tau} that $D_{\rho_*,n}^+(q)=+\infty$.
\end{proof}

\begin{proof2}
\begin{enumerate}
\item[1.]
  The result follows from Proposition~2.4 in \cite{AS2}, Proposition~\ref{propcorrelation} and Proposition~\ref{central2}.
 \item[2.] It is a consequence of item 1 and Proposition~\ref{BGT1}. 
\end{enumerate}
\end{proof2}




\section{Lower packing, upper $q$-generalized fractal dimensions of invariant measures and upper recurrence rate, upper quantitative waiting time indicator under $\alpha$-Hölder conjugations}
\label{section4}

This section is devoted to the study of the fractal and generalized fractal dimensions of invariant measures, along with recurrence rates and quantitative waiting time indicators of $\alpha$-Hölder conjugated topological systems. %

The first result shows that given two topological dynamical systems $(X,T)$ and $(Y,\tilde{T})$, if there exists a conjugation between them that satisfies certain regularity conditions, then the upper $q$-generalized fractal dimensions ($q>1)$ of their invariant measures are directly related.

\begin{propo}
\label{correlation}
Let $(X,T)$ and $(Y,\tilde{T})$ be topological dynamical systems and let $\varphi: X \rightarrow Y$ be an $\alpha$-Hölder continuous (with constant $C>1$) bijective map with continuous inverse. If $\varphi$ is a conjugation between $(X,T)$ and $(Y,\widetilde{T})$, that is, if
\[\widetilde{T}=\varphi \circ T \circ \varphi^{-1},
\]
then there exists an open and continuous bijection  $ \psi: \M_Y(\widetilde{T}) \rightarrow \M_X(T)$ such that for each $q>1$ and each  $\nu_Y\in \M_Y(\tilde{T})$, 
\begin{eqnarray*}
D_{\psi(\nu_Y)}^+(q)\geq \alpha D_{\nu _Y}^+(q). 
\end{eqnarray*}
Furthermore, $\psi$ maps dense $G_\delta$ subsets of $\M_Y(\widetilde{T})$ into dense $G_\delta$ subsets of $\M_X(T)$.
\end{propo}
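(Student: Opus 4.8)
The plan is to construct the map $\psi$ as the pushforward operator induced by the conjugation, $\psi(\nu_Y):=\nu_Y\circ\varphi=(\varphi^{-1})_*\nu_Y$, and verify in order: (i) that $\psi$ is a well-defined bijection between the spaces of invariant measures; (ii) that it is continuous, open (equivalently, that $\psi^{-1}$, which is the pushforward by $\varphi$, is continuous), hence a homeomorphism; and (iii) the dimensional inequality $D^+_{\psi(\nu_Y)}(q)\ge\alpha D^+_{\nu_Y}(q)$. Items (i) and (ii) are standard: since $\varphi$ is a homeomorphism onto $Y$ (bijective, $\alpha$-Hölder hence continuous, with continuous inverse) and intertwines $T$ and $\widetilde T$, the pushforward carries $\widetilde T$-invariant Borel probability measures bijectively onto $T$-invariant ones, and because $\varphi$ and $\varphi^{-1}$ are continuous both $\psi$ and $\psi^{-1}$ are continuous for the weak topologies (a net $\nu_\alpha\to\nu$ weakly iff $\int g\,d\nu_\alpha\to\int g\,d\nu$ for all $g\in C_b$; composing with the continuous $\varphi$ or $\varphi^{-1}$ preserves this). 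A homeomorphism automatically maps dense $G_\delta$ sets to dense $G_\delta$ sets, so the final sentence of the statement is immediate once (ii) is in hand.

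The heart of the argument is (iii), the comparison of energy functions. Write $\mu_X=\psi(\nu_Y)$. Because $\varphi$ is $\alpha$-Hölder with constant $C$, for each $x\in X$ and each small $r>0$ one has $\varphi(B_X(x,r))\subset B_Y(\varphi(x),Cr^\alpha)$, so $B_X(x,r)\subset\varphi^{-1}\big(B_Y(\varphi(x),Cr^\alpha)\big)$ and hence
\[
\mu_X(B_X(x,r))\le\nu_Y\big(B_Y(\varphi(x),Cr^\alpha)\big).
\]
Raising to the power $q-1>0$ and integrating $d\mu_X(x)=d(\varphi^{-1}_*\nu_Y)(x)$, i.e.\ changing variables via $y=\varphi(x)$, gives
\[
I_{\mu_X}(q,r)=\int \mu_X(B_X(x,r))^{q-1}\,d\mu_X(x)\le\int \nu_Y\big(B_Y(y,Cr^\alpha)\big)^{q-1}\,d\nu_Y(y)=I_{\nu_Y}(q,Cr^\alpha).
\]
Now take $\log$, divide by $(q-1)\log r$ (which is negative for small $r$, so the inequality reverses), and pass to $\limsup_{r\downarrow0}$. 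Writing $\varepsilon=Cr^\alpha$, one has $\log r=\alpha^{-1}(\log\varepsilon-\log C)$, so
\[
\frac{\log I_{\mu_X}(q,r)}{(q-1)\log r}\ge\frac{\log I_{\nu_Y}(q,Cr^\alpha)}{(q-1)\log r}=\alpha\cdot\frac{\log I_{\nu_Y}(q,\varepsilon)}{(q-1)(\log\varepsilon-\log C)},
\]
and since $\log C$ is a fixed constant and $\varepsilon\downarrow0$ as $r\downarrow0$, taking $\limsup$ of both sides yields $D^+_{\mu_X}(q)\ge\alpha D^+_{\nu_Y}(q)$, as claimed (the case $D^+_{\nu_Y}(q)=+\infty$ is included, giving $D^+_{\mu_X}(q)=+\infty$ as well).

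I expect the only genuinely delicate point to be bookkeeping with the direction of inequalities when dividing by the negative quantity $(q-1)\log r$ and when taking $\limsup$ versus $\liminf$, together with making sure the substitution $\varepsilon=Cr^\alpha$ is a legitimate monotone reparametrization of a neighborhood of $0$ so that the two $\limsup$'s are taken over cofinal families; the constant $C$ washes out in the limit. One should also note that $\varphi$ being $\alpha$-Hölder globally (as assumed here, in contrast to the merely \emph{locally} Hölder hypothesis of Theorem~\ref{packingconjC}) makes the inclusion $\varphi(B_X(x,r))\subset B_Y(\varphi(x),Cr^\alpha)$ uniform in $x$, which is what permits the clean change of variables inside the integral; no compactness of $X$ or $Y$ is needed for this proposition, only for its later application in Theorem~\ref{correlationconj}. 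The openness of $\psi$ then follows formally, and the transfer of dense $G_\delta$ sets is a one-line consequence.
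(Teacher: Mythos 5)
Your proposal is correct, and for the key dimensional inequality it is essentially the paper's argument: the same pushforward $\psi(\nu_Y)=\nu_Y\circ\varphi$, the same Hölder inclusion $\varphi(B_X(x,r))\subset B_Y(\varphi(x),Cr^\alpha)$ giving $\psi(\nu_Y)(B_X(x,r))\le\nu_Y(B_Y(\varphi(x),Cr^\alpha))$, the same change of variables in the energy integral, and the same reparametrization $\varepsilon=Cr^\alpha$ with the factor $(\ln C+\alpha\ln r)/\ln r\to\alpha$ handled correctly (including the sign flip from dividing by $(q-1)\log r<0$ and the $+\infty$ case). Where you genuinely diverge from the paper is in the topological part: you observe that $\psi$ and $\psi^{-1}$ are both pushforwards under continuous maps, hence both weakly continuous (via $\int f\,d\psi(\nu)=\int f\circ\varphi^{-1}\,d\nu$ with $f\circ\varphi^{-1}\in C_b(Y)$, and symmetrically), so $\psi$ is a homeomorphism and openness plus the transfer of dense $G_\delta$ sets are immediate. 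The paper instead works with the Lévy--Prokhorov-type metric $\vec{d}$: it proves continuity of $\psi$ using uniform continuity of $\varphi^{-1}$ (invoking compactness of $X$ and $Y$), and proves openness separately through the inclusion $\varphi(A^\delta)\subseteq\varphi(A)^{C\delta^\alpha}$, which is where the Hölder hypothesis enters that part of the argument; it then shows $\psi(\cap_nA_n)=\cap_n\psi(A_n)$ by injectivity to pass $G_\delta$ sets across. Your route is shorter and slightly more general (no compactness or uniform continuity of $\varphi^{-1}$ is needed, and the Hölder condition is used only for the dimension estimate), while the paper's metric-level estimates have the side benefit of exhibiting explicitly how $\vec d_X$ and $\vec d_Y$ distort under $\varphi$, in the same spirit as the quantitative arguments used elsewhere in Section 4. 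Either way the conclusion stands.
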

\begin{proof}
 Let $\M_X(T)$ and $\M_Y(\widetilde{T})$ denote, respectively, the spaces of $T$-invariant and $\widetilde{T}$-invariant measures, both endowed with the weak topology. Such topology is metrizable by the metric 
\[\vec{d}_X(\mu, \nu):=\inf \left\{\varepsilon>0 \mid \mu(A)<\nu\left(A^{\varepsilon}\right)+\varepsilon, \forall A \in \mathcal{B}(X)\right\},\]
where $A^{\varepsilon}:=\left\{x \mid d_X(x, A)<\varepsilon\right\}$ (see~\cite{gelfert} for a discussion).

Let the map $ \psi: \M_Y(\widetilde{T}) \rightarrow \M_X(T)$ be defined by the law
\[\psi\left(\nu_Y\right)(A):=\nu_Y(\varphi (A)),\qquad  A \in \mathcal{B}(X).\]

Firstly, we show that $\psi\left(\nu_Y\right) \in \M_X(T).$ Namely, one has for each $A\in\mathcal{B}(X)$,
\begin{eqnarray*}
\psi\left(\nu_Y\right)\left(T^{-1}(A)\right)&=& \nu_Y\left(\varphi\circ T^{-1}(A)\right)\\  
& =& \nu_Y\left(\varphi\circ T^{-1} \circ\varphi^{-1}\circ \varphi(A)\right)=\nu_Y\left(\tilde{T}^{-1}(\varphi(A))\right) \\
& =& \nu_Y(\varphi(A))=\psi\left(\nu_Y\right)(A),
\end{eqnarray*}
given that $\nu_Y\in \M_Y(\tilde{T})$ and $\varphi(A) \in \mathcal{B}(Y)$. Thus, $\psi\left(\nu_Y\right) \in \M_X(T)$.

\

\noindent{{\it{Claim.}}} $\psi$ is a bijection.

$\bullet$  $\psi$ is onto. Let $\nu_X \in \M_X(T)$ and set $\nu_Y :=\nu_X\circ \varphi^ {-1}$. We prove that $\nu_Y \in \M_Y(T)$. Let $\tilde{A}\in \mathcal{B}(Y)$; then,
\begin{eqnarray*}
\nu_Y \left(\widetilde{T}^{-1} (\tilde{A})\right)& =&\nu_Y\left((\widetilde{T}^{-1} \circ \varphi)(A)\right)=\nu_X\left((\varphi^{-1}\circ\widetilde{T}^{-1} \circ \varphi)(A)\right)\\
&=& \nu_X\left(T^{-1} (A)\right)= \nu_X (A)= \nu_Y\left(\varphi(A)\right)\\
&=& \nu_Y(\tilde{A}),
\end{eqnarray*}
where $A\in\mathcal{B}(X)$ is such that $\varphi(A)=\tilde{A}$. Thus, $\psi(\nu_Y)=\nu_X$ and $\psi$ is onto.

$\bullet$ $\psi$ is injective. Notice that the identity $\psi\left(\nu_Y\right)=\psi\left(\tilde{\nu}_Y\right)$ is equivalent, by definition, to $\nu_Y(\varphi(A))=\tilde{\nu}_Y(\varphi(A))$ for each $A \in \mathcal{B}(X)$, that is, to the identity $ \nu_X(A)=\tilde{\nu}_X(A)$.

So if $\nu_Y \neq \tilde{\nu}_Y$, then there exists $\tilde{A} \in \mathcal{B}(Y)$ such that 
$\nu_Y(\tilde{A}) \neq \tilde{\nu}_Y(\tilde{A})$. Thus,
$\psi\left(\nu_Y\right)(\varphi(A)) \neq \psi\left(\tilde{\nu}_Y\right)(\varphi(A))$, with $A\in \mathcal{B}(X)$ such that $\tilde{A}=\varphi(A)$. Hence, $\psi\left(\nu_Y\right)\neq \psi\left(\tilde{\nu}_Y\right)$, and $\psi$ in injective.


\

\noindent {\it Claim.} $\psi$ is continuous.

We must show that if $\vec{d}_Y\left(\mu_n, \nu\right) \rightarrow 0$, then $\vec{d}_X\left(\psi\left(\mu_n\right), \psi(\nu)\right) \rightarrow 0$.
We prove the counterpositive; suppose that there exist a subsequence $\left(\mu_{n_j}\right)$ and $\eta>0$ such that for each $j\in\mathbb{N}$, 
\[\vec{d}_X\left(\mu_{n_j} \circ \varphi, \nu \circ \varphi\right)= \inf \left\{\varepsilon>0 \mid (\mu_{n_j}\circ \varphi)(A)<(\nu \circ \varphi)\left(A^{\varepsilon}\right)+\varepsilon, ~\forall A \in \mathcal{B}(X)\right\}>\eta.\]
Then, there exists $A \in \mathcal{B}(X)$ such that for each $j\in\N$,
\begin{eqnarray}
\label{ineqcontinuous}
\left(\mu_{n_j} \circ \varphi\right)(A) \ge(\nu \circ \varphi)\left(A^{\eta}\right)+\eta.
\end{eqnarray}

Let us prove the following statement: for each $\eta>0$, there exists $\delta>0$ such that $\varphi(A)^\delta\subset \varphi\left(A^\eta\right)$. Namely, since $\varphi^{-1}: Y \rightarrow X$ is uniformly continuous (recall that both $X$ and $Y$ are compact), for each $\eta>0$ there exists $0<\delta<\eta$ such that if $d_Y(\varphi(x), \varphi(\tilde{x}))<\delta$, then 
$d_X(x, \tilde{x})<\eta$.

Now, if $y=\varphi(\tilde{x}) \in \varphi(A)^\delta$, then there exists $x \in A$ such that $d_Y(\varphi(\tilde{x}), \varphi(x))<\delta$ (namely, if
$y\in\varphi(A)^\delta=\left\{w \in Y \mid d_Y(w, \varphi(A))<\delta\right\}$, then $d_Y(y, \varphi(A))=\inf \left\{d_Y(y, \varphi(x)) \mid x \in A\right\}<\delta$, so there exists $x \in A$ such that $d_Y(y, \varphi(x))<\delta$), and then $d_X(\tilde{x}, x)<\eta$. Therefore,
$y \in \varphi\left(A^\eta\right)=\{\varphi(w) \mid w \in A^\eta\}=\left\{\varphi(w) \mid d_X\left(w, A\right)<\eta\right\}$, concluding the proof that $\varphi(A)^\delta\subset \varphi\left(A^\eta\right)$.

It follows from~\eqref{ineqcontinuous} and the statement above that for each $j\in\N$,
$\left(\mu_{n_j} \circ \varphi\right)(A) \ge (\nu \circ \varphi)\left(A^{\eta}\right)+\eta \ge \nu\left(\varphi(A)^{\delta}\right)+\delta$, that is, there exists $\tilde{A}\in \mathcal{B}(Y)$ such that for each $j\in \N$, $\mu_{n_j} (\tilde{A})  \ge \nu(\tilde{A}^{\delta})+\delta$, and so
\[\inf \left\{\varepsilon \mid \mu_{n_j}(B)<\nu (B^{\varepsilon})+\varepsilon, ~\forall B \in \mathcal{B}(Y)\right\}>\delta.\]

This shows that $\vec{d}_Y\left(\mu_n, \nu\right) \nrightarrow 0$.

\

\noindent {\it Claim.}  $\psi$ is an open map. 

It is sufficient to show that for each $\nu_Y\in\mathcal{M}_Y(\tilde{T})$, each $\eta>0$ and each 
$\nu_X \in \psi\left(B_Y\left(\nu_Y,\eta\right)\right)$, there exists   
$\delta>0$ such that $B_X\left(\nu_X, \delta\right) \subset \psi\left(B_Y\left(\nu_Y, \eta\right)\right)$.

Let $\nu_Y^{\prime}\in B_Y\left(\nu_Y,\eta\right)$ be such that $ \nu_Y^{\prime} \circ \varphi=\nu_X$, let $0<\delta<\min\left\{1,\left(\left(\eta-\vec{d}_Y(\nu_Y^{\prime}, \nu_Y)\right)\big/ 2 C\right)^{1 / \alpha}\right\}$, $\tilde{\nu}_X\in B_X\left(\nu_X , \delta\right)$, 
and let us assume for now that for each $A\in\mathcal{B}(X)$, $\varphi\left(A^\delta\right) \subseteq \varphi(A)^{C\delta ^\alpha}$; then, for each $\tilde{A}\in\mathcal{B}(Y)$, one has
\begin{eqnarray*}
\tilde{\nu}_Y(\tilde{A})=\tilde{\nu}_Y(\varphi(A)) &<& \nu_Y^{\prime}\left(\varphi\left(A^\delta\right)\right)+\delta \\
& <& \nu_Y^{\prime}\left(\varphi(A)^{C\delta^\alpha}\right)+C \delta^\alpha \\
& <& \nu_Y^{\prime}\left(\tilde{A}^{\left(\eta-\vec{d}_Y(\nu_Y^{\prime}, \nu_Y)\right)\big/ 2}\right)+\frac{\eta-\vec{d}_Y(\nu_Y^{\prime}, \nu_Y)}{2},
\end{eqnarray*}
given that $C \delta^\alpha<\left(\eta-\vec{d}_Y(\nu_Y^{\prime}, \nu_Y)\right)\big/ 2$ and $C \delta^\alpha>\delta$, where $A\in\mathcal{B}(X)$ is such that $\tilde{A}=\varphi(A)$ and $\tilde{\nu}_Y\in\mathcal{M}_Y(\tilde{T})$ is such that  $\tilde{\nu}_Y \circ \varphi=\tilde{\nu}_X$. 

Thus, $\vec{d}_Y\left(\tilde{\nu}_Y, \nu^{\prime}_Y\right)<\left(\eta-\vec{d}_Y(\nu_Y^{\prime}, \nu_Y)\right)\big/ 2$, from which follows that
\[\vec{d}_Y(\tilde{\nu}_Y, \nu_Y)\leq\vec{d}_Y\left(\tilde{\nu}_Y, \nu^{\prime}_Y\right)+\vec{d}_Y(\nu_Y^{\prime}, \nu_Y)<\frac{\eta+\vec{d}_Y(\nu_Y^{\prime}, \nu_Y)}{2}<\eta.\]

It remains to prove that for each $A\in\mathcal{B}(X)$, $\varphi\left(A^\delta\right) \subseteq \varphi(A)^{C\delta ^\alpha}$. Let $y \in \varphi\left(A^\delta\right)$; then,  there 
exists $w\in X$ such that $y=\varphi(w)$ and $d_X(w, A)<\delta$, and so there exists  $z\in A$ such that $d_X(w, z)<\delta$. Given that $\varphi$ is $\alpha$-Hölder continuous, it follows that 
$d_Y(\varphi(w), \varphi(z))<C \delta^{\alpha}$. 
Then, $d_Y(z, \varphi(A))<C \delta^\alpha$, that is, $y\in\varphi(A)^{C\delta ^\alpha}$.

\
 
\noindent {\it Claim.} $\psi$ maps $G_\delta$ dense subsets of $\M_Y (\widetilde{T})$ into $G_\delta$ dense subsets of $\M_X(T)$.

Since $\psi\left(\cap_n A_n\right)=\cap \psi\left(A_n\right)$ (if $x \in \psi\left(\cap A_n\right)$, then there exists $y \in \cap A_n$ such that $x=\psi(y)$, and so $x=\psi(y) \in \cap\psi\left( A_n\right)$; reciprocally, if $x \in \cap \psi\left(A_n\right)$, then there exists a sequence $\left(y_n\right)$ such that for each $n\in\N$, $x=\psi\left(y_n\right)$, and since $\psi$ is injective, one has $y=y_1\in \cap A_n$ and so $x \in \psi\left(\cap A_n\right)$), and $\psi$ is open, it follows that $\psi$ maps $G_\delta$ sets in $\M_Y(\tilde{T})$ into $G_\delta$ sets in $\M_X(T)$.

Now, since $\psi$ is continuous, if $\bar{\mathcal{A}}=\M_Y(\tilde{T})$, then $\overline{\psi(\mathcal{A})}=\M_X(T)$, so $\psi$ maps dense subsets of $\M_Y(\tilde{T})$ into dense subsets of $\M_X(T)$.

\


\noindent {\it Claim.} Let $q>1$ and $\nu_Y\in \M_Y(\tilde{T})$. Then, 
$D_{\psi(\nu_Y)}^+(q) \ge \alpha D_{\nu _Y}^+(q)$.

Since $\varphi$ is $\alpha$-Hölder, it follows that for each $x,x^\prime\in X$, $d_Y\left(\varphi(x), \varphi\left(x'\right)\right) \leq C d_X\left(x, x'\right)^\alpha$, and so for each $0<\varepsilon<1$ and each $\nu_Y\in\mathcal{M}_Y(\tilde{T})$, one has $\varphi(B(x , \varepsilon)) \subset B\left(\varphi(x) , C \varepsilon^\alpha\right)$ and $\psi\left(\nu_Y\right)(B(x , \varepsilon)) \leq \nu_Y\left(B\left(\varphi(x) , C \varepsilon^\alpha\right)\right)$. Therefore, for each $q>1$ and each $0<\varepsilon<1$ such that $C\varepsilon^\alpha<1$, one has 
\begin{eqnarray*}
\frac{\ln \int \nu_Y\left(B\left(y , C \varepsilon^\alpha\right)\right)^{q-1}d\nu_Y\left(y\right)}{(q-1)\ln \left(C \varepsilon^\alpha\right)}  \cdot   \frac{\ln C+\alpha \ln \varepsilon}{\ln \varepsilon} &= & \frac{\ln \int \nu_Y\left(B\left(y , C \varepsilon^\alpha\right)\right)^{q-1}d\nu_Y\left(y\right)}{(q-1)\ln \varepsilon}\\
&\le &\frac{\ln \int \nu_Y\left(\varphi( B\left(x ,  \varepsilon\right))\right)^{q-1}d\nu_Y\left(\varphi(x)\right)}{(q-1)\ln \varepsilon}\\
&=&\frac{\ln \int \psi\left(\nu_Y\right)(B(x , \varepsilon))^{q-1}d \psi\left(\nu_Y\right)(x)}{(q-1)\ln \varepsilon}.
\end{eqnarray*}
and thus $D_{\psi(\nu_Y)}^+(q)\geq \alpha D_{\nu _Y}^+(q)$.
\end{proof}

\begin{proof3}
Let $q>1$. The result is a consequence of Theorem~\ref{teocentral2} and Proposition~\ref{correlation}. Namely, it follows from Theorem~\ref{teocentral2} that $\mathcal{D}_Y^+(\tilde{T})$ is a dense $G_\delta$ subset of $\mathcal{M}_Y(\tilde{T})$. Now, since the map $\psi: \M_Y(\widetilde{T}) \rightarrow \M_X(T)$ is a bijection, it follows from the inequality $D_{\psi(\nu_Y)}^+(q)\geq \alpha D_{\nu _Y}^+(q)$ that
  \begin{eqnarray*}
    \mathcal{D}_X^+(T)&=&\bigcap_{L\ge 1}\{\psi(\nu_Y)\in\mathcal{M}_X(T)\mid D_{\psi(\nu_Y)}^+(q)>\alpha L\}\supset\psi\left(\bigcap_{L\ge 1}\{\nu_Y\in\mathcal{M}_Y(\tilde{T})\mid D_{\nu_Y}^+(q)> L\}\right)\\
    &=&\psi(\mathcal{D}_Y^+(\tilde{T})).
    \end{eqnarray*}

The result is now a consequence of the fact that $\psi$ maps dense $G_\delta$ subsets of $\mathcal{M}_Y(\tilde{T})$ into dense $G_\delta$ subsets of $\mathcal{M}_X(T)$.
\end{proof3}

\begin{rek} The hypothesis that $\varphi:X\rightarrow Y$ is $\alpha$-Hölder continuous is used in the proof that $\psi:\M_Y(\tilde{T})\rightarrow\M_X(T)$ is an open map and, naturally, in the proof that $D_{\psi(\nu_Y)}^+(q) \ge \alpha D_{\nu _Y}^+(q)$, for each $q>1$ and each $\nu_Y\in\M_Y(\tilde{T})$.

  The hypothesis that $\varphi^{-1}$ is (uniformly) continuous is needed in the proof that $\psi$ is continuous. 
\end{rek}  

\begin{rek}\label{rgen}
  Under the same hypotheses of Theorem~\ref{correlation}, the map 
    $\psi: \M_Y(\widetilde{T}) \rightarrow \M_X(T)$ is such that for each $0<s<1$ and each $\nu_Y\in\mathcal{M}_Y(\tilde{T})$, 
\begin{eqnarray*}
D_{\psi(\nu_Y)}^-(s)\leq \alpha D_{\nu _Y}^-(s). 
\end{eqnarray*}
Therefore, if $(X,T)$ and $(Y,\tilde{T})$ are conjugated topological dynamical systems, with $\overline{\mathcal{M}_p(\tilde{T})}=\mathcal{M}_Y(\tilde{T})$  
and whose conjugation $\varphi:X\to Y$ satisfies the hypotheses stated in Theorem~\ref{correlation}, then for each $0<s<1$, the set
  \[\{\mu_X\in\mathcal{M}_X(T)\mid D_{\mu_X}^-(s)=0\}\] is a dense $G_\delta$ subset of $\mathcal{M}_X(T)$. This result is a direct consequence of Theorem~1.2 in~\cite{AS2}.
\end{rek}

Now, we let $(X,T)$ and $(Y,\tilde{T})$ be such that both $X$ and $Y$ are Polish metric spaces. In this setting,  the existence of a conjugation $\varphi: X \rightarrow Y$ between $T$ and $\tilde{T}$ that is locally $\alpha$-Hölder continuous guarantees that to each $\mu_X\in\mathcal{M}_X(T)$ of positive lower packing dimension corresponds an invariant measure $\nu_Y\in\mathcal{M}_Y(\tilde{T})$ of positive lower packing dimension.

\begin{teo}\label{packingconj}
Let $(X,T)$ and $(Y,\tilde{T})$ be such that both $X$ and $Y$ are Polish metric spaces and both $T$ and $\tilde{T}$ are continuous. Let $\varphi: X \rightarrow Y$ be a locally $\alpha$-Hölder continuous bijective map whose inverse map is uniformly continuous. If $\varphi$ is a conjugation between $(X,T)$ and $(Y,\widetilde{T})$, 
then there exists an open and continuous bijection  $ \psi: \M_Y(\widetilde{T}) \rightarrow \M_X(T)$ such that for each $\nu_Y\in\mathcal{M}_Y(\tilde{T})$,
\begin{eqnarray*}
\dim_P^-(\psi\left(\nu_Y\right))\ge \alpha \dim_P^-(\nu_Y).
\end{eqnarray*}
Furthermore, $\psi$ maps dense subsets of $\M_Y(\widetilde{T})$ into dense subsets of $\M_X(T)$.
\end{teo}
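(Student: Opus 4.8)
I would build the transfer map exactly as in Proposition~\ref{correlation}. Define $\psi:\M_Y(\widetilde T)\to\M_X(T)$ by
\[\psi(\nu_Y)(A):=\nu_Y(\varphi(A)),\qquad A\in\mathcal{B}(X),\]
so that $\psi(\nu_Y)=(\varphi^{-1})_*\nu_Y$; note that $\varphi(A)\in\mathcal{B}(Y)$ because a continuous injective image of a Borel subset of a Polish space is Borel (Lusin--Souslin). That $\psi(\nu_Y)$ is a $T$-invariant Borel probability measure, and that $\psi$ is a bijection onto $\M_X(T)$ with inverse $\nu_X\mapsto\varphi_*\nu_X$, is verified by the same computations as in the proof of Proposition~\ref{correlation}; they use only that $\varphi$ is a Borel bijection with $\widetilde T=\varphi\circ T\circ\varphi^{-1}$ and no regularity of $\varphi$. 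For the topological statements I would argue conceptually: since $\varphi$ is $\alpha$-Hölder (hence continuous) and $\varphi^{-1}$ is uniformly continuous (hence continuous), $\varphi$ is a homeomorphism, so for every $f\in C_b(X)$ one has $f\circ\varphi^{-1}\in C_b(Y)$ and $\int_X f\,d\psi(\nu_Y)=\int_Y f\circ\varphi^{-1}\,d\nu_Y$; this shows $\psi$ is continuous for the weak topology, and symmetrically $\psi^{-1}=\varphi_*$ is continuous because $g\mapsto g\circ\varphi$ sends $C_b(Y)$ into $C_b(X)$. Hence $\psi$ is a homeomorphism, in particular open, and it carries dense subsets of $\M_Y(\widetilde T)$ onto dense subsets of $\M_X(T)$ (indeed dense $G_\delta$ onto dense $G_\delta$). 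One may instead reproduce the Prokhorov-metric estimates of Proposition~\ref{correlation}, with the assumed uniform continuity of $\varphi^{-1}$ replacing the compactness used there.

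The substance is the inequality $\dim_P^-(\psi(\nu_Y))\ge\alpha\dim_P^-(\nu_Y)$. By Proposition~\ref{BGT} it suffices to prove
\[\psi(\nu_Y)\textrm{-}\essinf_{x\in X}\overline{d}_{\psi(\nu_Y)}(x)\ \ge\ \alpha\,\bigl(\nu_Y\textrm{-}\essinf_{y\in Y}\overline{d}_{\nu_Y}(y)\bigr),\]
and for this I would first establish the pointwise bound $\overline{d}_{\psi(\nu_Y)}(x)\ge\alpha\,\overline{d}_{\nu_Y}(\varphi(x))$ for every $x\in X$. Fix $x$. By local $\alpha$-Hölder continuity there are a neighbourhood $U_x\ni x$ and $C_x>0$ with $d_Y(\varphi(u),\varphi(v))\le C_x d_X(u,v)^\alpha$ for $u,v\in U_x$; hence, for all small $r>0$, $\varphi(B_X(x,r))\subseteq B_Y(\varphi(x),C_x r^\alpha)$ and therefore
\[\psi(\nu_Y)(B_X(x,r))=\nu_Y(\varphi(B_X(x,r)))\le\nu_Y\bigl(B_Y(\varphi(x),C_x r^\alpha)\bigr).\]
Substituting $s=C_x r^\alpha$ (so $\log r=\tfrac1\alpha(\log s-\log C_x)$ and $\log s-\log C_x\sim\log s$ as $s\downarrow0$), and using that $\log\nu_Y(B_Y(\varphi(x),s))/\log s\ge0$ for $s<1$, a routine manipulation of the $\limsup$ gives the claimed pointwise bound (with the usual convention $\overline{d}=+\infty$ when some ball is null, so the bound holds in all cases). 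Finally, since $\varphi$ is a bijection, $\psi(\nu_Y)(\varphi^{-1}(E))=\nu_Y(E)$ for every Borel $E\subseteq Y$; applying this to the sub-level sets $E=\{y:\overline{d}_{\nu_Y}(y)<t\}$ gives $\psi(\nu_Y)\textrm{-}\essinf_x\overline{d}_{\nu_Y}(\varphi(x))=\nu_Y\textrm{-}\essinf_y\overline{d}_{\nu_Y}(y)$, and combining this with the pointwise bound and Proposition~\ref{BGT} yields $\dim_P^-(\psi(\nu_Y))\ge\alpha\dim_P^-(\nu_Y)$.

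The calculation is routine; the only place where the present hypotheses (Polish, non-compact spaces; merely \emph{locally} Hölder $\varphi$) genuinely differ from Proposition~\ref{correlation} is in the openness of $\psi$. There openness rested on the uniform inclusion $\varphi(A^\delta)\subseteq\varphi(A)^{C\delta^\alpha}$, valid for all Borel $A$, which can fail when $X$ is not compact and $\varphi$ is only locally Hölder. I would resolve this by the homeomorphism argument above, which needs no Hölder input for the topology of $\psi$; if a quantitative metric proof is preferred, one localises the Prokhorov-metric argument to a compact set carrying all but an $\varepsilon$-fraction of the mass (Ulam tightness of Borel probability measures on Polish spaces), on which local Hölder continuity is automatically uniform, at the cost of routine bookkeeping of the tail terms. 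The dimension inequality then transports the infinite-packing-dimension conclusion of Theorem~\ref{teocentral1} across any such conjugation, which is exactly the input needed for Theorem~\ref{packingconjC}.
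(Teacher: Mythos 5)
Your proof is correct, and its core coincides with the paper's: the pointwise bound $\overline{d}_{\psi(\nu_Y)}(x)\ge\alpha\,\overline{d}_{\nu_Y}(\varphi(x))$ obtained from the local inclusion $\varphi(B(x,r))\subset B(\varphi(x),C_xr^{\alpha})$ and the resulting measure inequality $\psi(\nu_Y)(B(x,r))\le\nu_Y(B(\varphi(x),C_xr^{\alpha}))$, followed by Proposition~\ref{BGT}; you additionally make explicit the change-of-variables step $\psi(\nu_Y)\textrm{-}\essinf\,\overline{d}_{\nu_Y}\circ\varphi=\nu_Y\textrm{-}\essinf\,\overline{d}_{\nu_Y}$ (via $\psi(\nu_Y)(\varphi^{-1}(E))=\nu_Y(E)$), which the paper leaves implicit. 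Where you genuinely diverge is the topological part. The paper only says that the bijective, continuous (and open) $\psi$ ``follows the same arguments'' as Proposition~\ref{correlation}; but there continuity of $\psi$ used compactness to get uniform continuity of $\varphi^{-1}$ (here it is hypothesized, so that transfers), while openness rested on the global inclusion $\varphi(A^{\delta})\subseteq\varphi(A)^{C\delta^{\alpha}}$, which is not literally available for a merely locally H\"older $\varphi$ on a non-compact space — a point you correctly flag. Your replacement is cleaner: since $\varphi$ is a continuous bijection with continuous inverse, it is a homeomorphism, so $\psi=(\varphi^{-1})_*$ and $\psi^{-1}=\varphi_*$ are both weakly continuous by composing with bounded continuous functions, hence $\psi$ is a homeomorphism, in particular open, and carries dense (even dense $G_\delta$) sets to dense (dense $G_\delta$) sets; no H\"older input is needed for this part at all. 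Either route supplies what Theorem~\ref{packingconjC} actually uses (density transfer; the $G_\delta$ structure there comes from Proposition~2.1 in~\cite{AS}), so your argument both proves the statement and repairs the one step of the paper's citation-by-analogy that does not carry over verbatim.
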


\begin{proof}
The existence of the bijective and continuous map $\psi: \M_Y(\widetilde{T}) \rightarrow \M_X(T)$  follows the same arguments presented in the proof of Theorem~\ref{correlation}. Since $\psi$ is continuous, it maps dense subsets of $\mathcal{M}_Y(\tilde{T})$ into dense subsets of $\mathcal{M}_X(T)$.

It remains to prove that if $\nu_Y\in \M_Y(T)$, then $\operatorname{dim}_P^{-}(\psi\left(\nu_Y\right))\ge\alpha\operatorname{dim}_P^{-}(\nu_Y)$. 
By Proposition~\ref{BGT}, it is sufficient to prove that 
$\psi\left(\nu_Y\right)\textrm{-}\essinf d_{\psi\left(\nu_Y\right)}^+\ge\alpha\nu_Y\textrm{-}\essinf d_{\nu_Y}^{+}$.

Given that $\varphi$ is locally $\alpha$-Hölder continuous, it follows that for each $x\in X$, there exist $0<\varepsilon<1$ and a constant $C>1$ such that if $x^\prime\in B(x,\varepsilon)$, then 
$d_Y\left(\varphi(x), \varphi\left(x'\right)\right) \leq C d_X\left(x, x'\right)^\alpha$. Therefore, one has for each $x\in X$  and each $0<\varepsilon<1$ such that $C\varepsilon^\alpha<1$, the set inclusions $\varphi(B(x , \varepsilon)) \subset B\left(\varphi(x) , C \varepsilon^\alpha\right)$ and $\psi\left(\nu_Y\right)(B(x , \varepsilon)) \leq \nu_Y\left(B\left(\varphi(x) , C \varepsilon^\alpha\right)\right)$, and so 
\begin{eqnarray*}
\frac{\ln \psi\left(\nu_Y\right)(B(x , \varepsilon))}{\ln \varepsilon} &\ge & \frac{\ln \nu_Y\left(B\left(\varphi(x) , C \varepsilon^\alpha\right)\right)}{\ln \varepsilon}\\
&=&\frac{\ln \nu_Y\left(B\left(\varphi(x), C \varepsilon^\alpha\right)\right)}{\ln \left(C \varepsilon^\alpha\right)}  \cdot   \frac{\ln C+\alpha \ln \varepsilon}{\ln \varepsilon},
\end{eqnarray*}
from which follows that for each $x\in X$, $d_{\psi\left(\nu_Y\right)}^{+}(x) \geqslant \alpha d_{\nu_Y}^{+}(\varphi(x))$. This proves the inequality $\psi\left(\nu_Y\right)\textrm{-}\essinf d_{\psi\left(\nu_Y\right)}^+\ge\alpha\nu_Y\textrm{-}\essinf d_{\nu_Y}^{+}$.
\end{proof}

\begin{rek}\label{Rpackingconj}
 In the same setting of Proposition~\ref{packingconj}, if one assumes that the conjugation $\varphi:X\rightarrow Y$ is a continuous map with its inverse a locally $\alpha$-Hölder continuous map, then there exists an open and continuous bijection
 $\phi: \M_Y(\widetilde{T}) \rightarrow \M_X(T)$ such that for each  $\nu_Y\in\mathcal{M}_Y(\tilde{T})$, 
\[\alpha\dim_H^+(\phi(\nu_Y))\leq \dim_H^+(\nu _Y).\] 
The result is a consequence of the following modification of the proof of Proposition~\ref{packingconj}: if $\nu_Y\in \M_Y(\tilde{T})$, then $\nu_Y\textrm{-}\esssup d_{\nu_Y}^-\ge\alpha\phi(\nu_Y)\textrm{-}\esssup d_{\phi(\nu_Y)}^{-}$.

Namely, since $\varphi^{-1}$ is $\alpha$-Hölder, it follows that for each $y\in Y$, there exists $0<\ve<1$ and a constant $C>1$ such that if $y^\prime\in B(y,\ve)$, then $d_X\left(\varphi^{-1}(y), \varphi^{-1}\left(y'\right)\right) \leq C d_Y\left(y, y'\right)^\alpha$, and so for each $0<\varepsilon<1$ and each $\nu_Y\in\mathcal{M}_Y(\tilde{T})$, one has $\varphi^{-1}(B(y , \varepsilon)) \subset B\left(\varphi^{-1}(y) , C \varepsilon^\alpha\right)$ and $\phi\left(\nu_Y\right)(B(\varphi^{-1}(y), C\varepsilon^\alpha)) \geq \nu_Y\left(B\left(y, \varepsilon\right)\right)$. Thus, one has for each $y\in Y$ and each $0<\varepsilon<1$ such that $C\varepsilon^\alpha<1$, 
\begin{eqnarray*}
\frac{\ln\left( \nu_Y(B(y , \varepsilon))\right)}{\ln \varepsilon} &\ge & \frac{\ln\left( \phi(\nu_Y)\left(B\left(\varphi^{-1}(y) , C \varepsilon^\alpha\right)\right)\right)}{\ln \varepsilon}\\
&=&\frac{\ln \left(\phi(\nu_Y)\left(B\left(\varphi^{-1}(y), C \varepsilon^\alpha\right)\right)\right)}{\ln \left(C \varepsilon^\alpha\right)}  \cdot   \frac{\ln C+\alpha \ln \varepsilon}{\ln \varepsilon},
\end{eqnarray*}
from which follows that for each $y\in Y$, $d_{\nu_Y}^{-}(x) \geqslant \alpha d_{\phi(\nu_Y)}^{-}(\varphi^{-1}(y))$, and so $\nu_Y\textrm{-}\esssup d_{\nu_Y}^-\ge\alpha\phi(\nu_Y)\textrm{-}\esssup d_{\phi(\nu_Y)}^{-}$.

Therefore, if $(X,T)$ and $(Y,\tilde{T})$ are conjugated topological dynamical systems, with $\overline{\mathcal{M}_p(\tilde{T})}=\mathcal{M}_Y(\tilde{T})$  
and whose conjugation $\varphi:X\to Y$ satisfies the hypotheses stated above, then the set
  \[\{\mu_X\in\mathcal{M}_X(T)\mid \dim_H(\mu_X)=0\}\] is a residual subset of $\mathcal{M}_X(T)$. This result is a direct consequence of Corollary~1.1 in~\cite{AS2}.
\end{rek}

Finally, let us consider how the upper recurrence rate of $x\in X$ and the upper quantitative waiting time indicator behave under a locally $\alpha$-Hölder conjugation between $(X,T)$ and $(Y,\tilde{T})$.

\begin{propo}\label{recurrenceconj} Let $(X,T)$, $(Y,\tilde{T})$ and  $\varphi: X \rightarrow Y$ be as in the statement of Theorem~\ref{packingconj}. Then, 
  for each $x,x^\prime\in X$, 
\begin{eqnarray*}
\alpha\overline{R}(\varphi(x);\tilde{T})\le \overline{R}(x;T), \qquad \alpha\overline{R}(\varphi(x),\varphi(x^\prime);\tilde{T})\le \overline{R}(x,x^\prime;T).
\end{eqnarray*}
\end{propo}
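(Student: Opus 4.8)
The plan is to exploit the conjugation relation $\widetilde{T}^{\,k}\circ\varphi=\varphi\circ T^k$ (valid for every $k\in\N$, since $\widetilde{T}=\varphi\circ T\circ\varphi^{-1}$) together with the fact that $\varphi$ is locally $\alpha$-Hölder continuous at the (fixed) base point of the orbit. Fix $x\in X$; by local $\alpha$-Hölder continuity there are $\varepsilon_x\in(0,1)$ and a constant $C_x>1$ such that $d_Y(\varphi(x),\varphi(z))\le C_x\,d_X(x,z)^\alpha$ whenever $z\in B(x,\varepsilon_x)$. The idea is to show that a return of the $T$-orbit of $x$ to $\overline{B}(x,r)$ forces a return of the $\widetilde{T}$-orbit of $\varphi(x)$ to $\overline{B}(\varphi(x),C_xr^\alpha)$, and then to track what this does to the defining ratios of $\overline{R}$.

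First I would establish the comparison of return times. Let $0<r<\varepsilon_x$ and set $k:=\tau_r(x;T)$. If $k=\infty$ there is nothing to prove for this $r$; otherwise $T^kx\in\overline{B}(x,r)\subset B(x,\varepsilon_x)$, so $d_Y(\widetilde{T}^{\,k}(\varphi(x)),\varphi(x))=d_Y(\varphi(T^kx),\varphi(x))\le C_xr^\alpha$, that is, $\widetilde{T}^{\,k}(\varphi(x))\in\overline{B}(\varphi(x),C_xr^\alpha)$. Consequently
\[\tau_{C_xr^\alpha}\!\big(\varphi(x);\widetilde{T}\big)\le\tau_r(x;T)\]
for every $0<r<\varepsilon_x$ (with the convention $\infty\le\infty$).

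Next I would pass to the limit. Taking logarithms in $[0,+\infty]$ and dividing the last inequality by $-\log(C_xr^\alpha)=-\log C_x-\alpha\log r>0$ gives, for $r$ small,
\[\frac{\log\tau_{C_xr^\alpha}(\varphi(x);\widetilde{T})}{-\log(C_xr^\alpha)}\le\frac{\log\tau_r(x;T)}{-\log r}\cdot\frac{-\log r}{-\log C_x-\alpha\log r},\]
and the last factor tends to $1/\alpha$ as $r\downarrow 0$. Since $r\mapsto C_xr^\alpha$ is an increasing homeomorphism of a right-neighbourhood of $0$ onto a right-neighbourhood of $0$, the $\limsup$ as $r\downarrow0$ of the left-hand side equals $\overline{R}(\varphi(x);\widetilde{T})$; and since multiplying a nonnegative quantity by a factor converging to $1/\alpha$ only multiplies its $\limsup$ by $1/\alpha$ (this remains valid when that $\limsup$ is $+\infty$, hence also when $\overline{R}(x;T)=+\infty$), the $\limsup$ of the right-hand side equals $\tfrac1\alpha\,\overline{R}(x;T)$. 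This yields $\alpha\,\overline{R}(\varphi(x);\widetilde{T})\le\overline{R}(x;T)$. For the quantitative waiting time indicator the argument is verbatim the same, with $x'$ playing the role of the centre of the target ball: using local $\alpha$-Hölder continuity at $x'$ (with data $\varepsilon_{x'},C_{x'}$) one obtains $\tau_{C_{x'}r^\alpha}(\varphi(x),\varphi(x');\widetilde{T})\le\tau_r(x,x';T)$ for $r$ small, and the same passage to the $\limsup$ gives $\alpha\,\overline{R}(\varphi(x),\varphi(x');\widetilde{T})\le\overline{R}(x,x';T)$.

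The argument is essentially routine; the only points requiring a little care are (i) that the Hölder data $\varepsilon_x,C_x$ (resp. $\varepsilon_{x'},C_{x'}$) depend on the chosen point, which is harmless since that point is fixed throughout, and (ii) the bookkeeping of the $\limsup$ under the reparametrisation $\rho=C_xr^\alpha$ together with the multiplicative correction $\tfrac{-\log r}{-\log C_x-\alpha\log r}\to1/\alpha$, including the degenerate case in which the right-hand side is infinite. Note that no hypothesis beyond local $\alpha$-Hölder continuity of $\varphi$ is used here; in particular the uniform continuity of $\varphi^{-1}$ plays no role in this proposition (it was needed only for the properties of $\psi$ in Theorem~\ref{packingconj}).
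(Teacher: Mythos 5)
Your proof is correct and follows essentially the same route as the paper: the conjugation identity $\widetilde{T}^{\,k}\circ\varphi=\varphi\circ T^{k}$ plus local $\alpha$-Hölder continuity at the centre of the target ball gives $\tau_{Cr^{\alpha}}(\varphi(x);\widetilde{T})\le\tau_{r}(x;T)$, and the passage to the $\limsup$ with the correction factor $\frac{\ln C+\alpha\ln r}{\ln r}\to\alpha$ is exactly the paper's computation (the paper likewise proves only the first inequality and leaves the waiting-time case to the identical argument). Your explicit remarks on the reparametrisation $\rho=C_{x}r^{\alpha}$, the infinite cases, and the fact that uniform continuity of $\varphi^{-1}$ is not needed here are accurate refinements of what the paper leaves implicit.
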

\begin{proof} We just present the proof of the first inequality. Note that for each $k\in\N$ and each $x\in X$, one has $\tilde{T}^k(\varphi(x))=\varphi(T^k(x))$. Moreover, since $\varphi$ is locally $\alpha$-Hölder continuous, it follows that for each $x\in X$, there exist $0<r<1$ and a positive constant $C$ such that 
  $\varphi(\overline{B}(x,r)) \subset \overline{B}\left(\varphi(x),Cr^\alpha\right)$.

  By combining these statements, one gets for each $x\in X$ and each $0<r<1$,
  \begin{eqnarray*}
    \tau_{Cr^{\alpha}}(\varphi(x);\tilde{T})&=&\inf\{k\in\N\mid \tilde{T}^k(\varphi(x))\in \overline{B}\left(\varphi(x),Cr^\alpha\right)\}\\
    &\le&\inf\{k\in\N\mid \varphi(T^k(x))\in \varphi(\overline{B}(x,r))\}=\tau_{r}(x;T),
 \end{eqnarray*}   
where we have used the fact that for each $x\in X$ and each $k\in \N$, $T^k(x)\in \overline{B}(x,r)$ if, and only if, $\varphi(T^k(x))\in \varphi(\overline{B}(x,r))$. Thus, one has for each $x\in X$ and each $0<r<1$ such that $Cr^\alpha<1$,
\begin{eqnarray*}
  \frac{\ln \tau_{Cr^{\alpha}}(\varphi(x);\tilde{T})}{-\ln Cr^\alpha} \cdot   \frac{\ln C+\alpha \ln r}{\ln r}\le \frac{\ln \tau_r(x;T)}{-\ln r},
\end{eqnarray*}
from which the result follows.
  \end{proof}

\begin{rek}\label{rrec}
 In the same setting of Proposition~\ref{recurrenceconj}, if one supposes that the conjugation $\varphi:X\rightarrow Y$ is a continuous map with its inverse a locally $\alpha$-Hölder continuous map, then then there exists an open and continuous bijection
 $\phi: \M_Y(\widetilde{T}) \rightarrow \M_X(T)$ is such that for each 
 $\nu_Y\in\mathcal{M}_Y(\tilde{T})$ and each $x,x^\prime\in X$, 
\begin{eqnarray*}
\underline{R}(x;T)\leq \alpha \underline{R}(\varphi(x);\tilde{T}),\qquad \underline{R}(x,x^\prime;T)\leq \alpha \underline{R}(\varphi(x),\varphi(x^\prime);\tilde{T}).
\end{eqnarray*}
%
The result follows from the proof of Proposition~\ref{recurrenceconj}, but since the argument is similar to the one presented in Remark~\ref{Rpackingconj}, we do not show the details.
\end{rek}

\begin{proof4}  
\begin{enumerate} \item The result follows from Theorem~\ref{teocentral1}-(1) and Proposition~\ref{packingconj}. Namely, it follows from Theorem~\ref{teocentral1}-(1) that $PD_Y(\tilde{T})$ is a dense $G_\delta$ subset of $\mathcal{M}_Y(\tilde{T})$. Now, since the map $\psi: \M_Y(\widetilde{T}) \rightarrow \M_X(T)$ is a bijection, it follows from the inequality $\dim_P^-(\psi(\nu_Y))\geq \alpha \dim_P^-(\nu _Y)$ that for each $L\in\N$,
  \begin{eqnarray*}
    \{\psi(\nu_Y)\in\mathcal{M}_X(T)\mid \dim_P^-(\psi(\nu_Y))\ge\alpha L\}\supset\psi\left(\{\nu_Y\in\mathcal{M}_Y(\tilde{T})\mid \dim_P^-(\nu_Y)\ge L\}\right).
    \end{eqnarray*}

  The result is now a consequence of the following facts:
  \begin{itemize}
  \item $\psi$ maps dense subsets of $\mathcal{M}_Y(\tilde{T})$ into dense subsets of $\mathcal{M}_X(T)$ (and so, for each $L\in\N$, $\{\mu_X\in\mathcal{M}_X(T)\mid \dim_P^-(\mu_X)\ge\alpha L\}$ is a dense subset of $\mathcal{M}_X(T)$);
  \item for each $L\in\N$, $\{\mu_X\in\mathcal{M}_X(T)\mid \dim_P^-(\mu_X)\ge\alpha L\}$ is a $G_\delta$ subset of $\mathcal{M}_X(T)$, by Proposition~2.1 in~\cite{AS} (here we also use the fact that $\mathcal{M}_X(T)$ is a dense $G_\delta$ subset of $\mathcal{M}(X)$, the set of probability measures defined in $(X,\mathcal{B})$, endowed with the weak topology; see~\cite{Parthasarathy1961});
  \item $PD_X(T)=\bigcap_{L\ge 1}\{\mu_X\in\mathcal{M}_X(T)\mid \dim_P^-(\mu_X)\ge\alpha L\}$.
    \end{itemize}

\item  The result follows from Theorem~\ref{teocentral1}-(2) and Proposition~\ref{recurrenceconj}. Namely, it follows from Theorem~\ref{teocentral1}-(2) that $\overline{\mathcal{R}}_Y(\tilde{T})$ is a dense $G_\delta$ subset of $\mathcal{M}_Y(\tilde{T})$. Now,  
  it follows from the inequality $\alpha\overline{R}(\varphi(x);\tilde{T})\le \overline{R}(x;T)$, valid for each $x\in X$, that for each $L\in\N$,
  \begin{eqnarray*}
    \{\psi(\nu_Y)\in\mathcal{M}_X(T)\mid \overline{R}(x;T)\ge\alpha L\;\textrm{for}\;\psi(\nu_Y)\textrm{-a.e.}\,x\}&&\\
    \supset\psi\left(\{\nu_Y\in\mathcal{M}_Y(\tilde{T})\mid \overline{R}(\varphi(x);\tilde{T})\ge L\;\textrm{for}\;\nu_Y\textrm{-a.e.}\,\varphi(x)\}\right).
    \end{eqnarray*}

  The result is now a consequence of the following facts:
  \begin{itemize}
  \item 
    for each $L\in\N$, $\{\mu_X\in\mathcal{M}_X(T)\mid \overline{R}(x;T)\ge\alpha L\;\textrm{for}\;\mu_X\textrm{-a.e.}\,x\}$ is a dense ($\psi$ maps dense subsets of $\mathcal{M}_Y(\tilde{T})$ into dense subsets of $\mathcal{M}_X(T)$) $G_\delta$ (by Proposition~3.1 in~\cite{AS}) subset of $\mathcal{M}_X(T)$;
  \item $\overline{\mathcal{R}}(T)=\bigcap_{L\ge 1}\{\mu_X\in\mathcal{M}_X(T)\mid \overline{R}(x;T)\ge\alpha L\;\textrm{for}\;\mu_X\textrm{-a.e.}\,x\}$.
\end{itemize}
\item  It is a consequence of item 1 and the second inequality in~\eqref{Gala}.
\end{enumerate}
\end{proof4}


\bibliography{refs}{}
\bibliographystyle{acm}

\end{document}